\theoremstyle{plain}
\newtheorem{thm}{Theorem}[section]
\crefname{thm}{Theorem}{Theorems}
\theoremstyle{plain}
\newtheorem{lem}[thm]{Lemma}
\crefname{lem}{Lemma}{Lemmas}
\theoremstyle{plain}
\theoremstyle{plain}
\newtheorem*{claim*}{Claim}
\crefname{claim}{Claim}{Claims}
\theoremstyle{definition}
\theoremstyle{plain}
\theoremstyle{plain}
\newtheorem{prop}[thm]{Proposition}
\theoremstyle{definition}
\theoremstyle{definition}
\theoremstyle{plain}
\newtheorem{claim}[thm]{Claim}
\date{}
\crefname{appsec}{Appendix}{Appendices}
\crefname{enumi}{condition}{conditions}
\Crefname{enumi}{Condition}{Conditions}
\let\originalleft\left
\let\originalright\right
\renewcommand{\left}{\mathopen{}\mathclose\bgroup\originalleft}
\renewcommand{\right}{\aftergroup\egroup\originalright}
\renewcommand*{\UrlTildeSpecial}{%
  \do\~{%
    \mbox{%
      \fontfamily{ptm}\selectfont
      \textasciitilde
    }%
  }%
}%
\let\Url@force@Tilde\UrlTildeSpecial
\begin{document}

\title{\texorpdfstring{\vspace{-1cm}}{}
Proof of a conjecture
on induced
subgraphs of Ramsey graphs}

\author{Matthew Kwan \thanks{Department of Mathematics, Stanford University, Stanford, CA 94305. Email: \href{mailto:mattkwan@stanford.edu} {\nolinkurl{mattkwan@stanford.edu}}. This research was done while the author was working at ETH Zurich, and is supported in part by SNSF project 178493.}\and
Benny Sudakov\thanks{Department of Mathematics, ETH, 8092 Z\"urich, Switzerland. Email:
\href{mailto:benjamin.sudakov@math.ethz.ch} {\nolinkurl{benjamin.sudakov@math.ethz.ch}}.
Research supported in part by SNSF grant 200021-175573.}}

\maketitle
\global\long\def\RR{\mathbb{R}}
\global\long\def\QQ{\mathbb{Q}}
\global\long\def\HH{\mathbb{H}}
\global\long\def\E{\mathbb{E}}
\global\long\def\Var{\operatorname{Var}}
\global\long\def\CC{\mathbb{C}}
\global\long\def\NN{\mathbb{N}}
\global\long\def\ZZ{\mathbb{Z}}
\global\long\def\GG{\mathbb{G}}
\global\long\def\BB{\mathbb{B}}
\global\long\def\DD{\mathbb{D}}
\global\long\def\cL{\mathcal{L}}
\global\long\def\supp{\operatorname{supp}}
\global\long\def\one{\mathbbm{1}}
\global\long\def\range#1{\left[#1\right]}
\global\long\def\d{\operatorname{d}\!}
\global\long\def\falling#1#2{\left(#1\right)_{#2}}
\global\long\def\f{\mathbf{f}}
\global\long\def\im{\operatorname{im}}
\global\long\def\sp{\operatorname{span}}
\global\long\def\rank{\operatorname{rank}}
\global\long\def\sign{\operatorname{sign}}
\global\long\def\mod{\operatorname{mod}}
\global\long\def\id{\operatorname{id}}
\global\long\def\disc{\operatorname{disc}}
\global\long\def\lindisc{\operatorname{lindisc}}
\global\long\def\tr{\operatorname{tr}}
\global\long\def\adj{\operatorname{adj}}
\global\long\def\Unif{\operatorname{Unif}}
\global\long\def\Po{\operatorname{Po}}
\global\long\def\Bin{\operatorname{Bin}}
\global\long\def\Ber{\operatorname{Ber}}
\global\long\def\Geom{\operatorname{Geom}}
\global\long\def\sat{\operatorname{sat}}
\global\long\def\Hom{\operatorname{Hom}}
\global\long\def\vol{\operatorname{vol}}
\global\long\def\floor#1{\left\lfloor #1\right\rfloor }
\global\long\def\ceil#1{\left\lceil #1\right\rceil }
\global\long\def\cond{\,\middle|\,}
\global\long\def\n#1{n_{\scalerel*{#1}{>}}}

\begin{abstract}
An $n$-vertex graph is called \emph{$C$-Ramsey} if it has no clique
or independent set of size $C\log n$. All known constructions of
Ramsey graphs involve randomness in an essential way, and there is
an ongoing line of research towards showing that in fact all Ramsey
graphs must obey certain ``richness'' properties characteristic
of random graphs. More than 25 years ago, Erd\H os, Faudree and S\'os conjectured that in any $C$-Ramsey graph there
are $\Omega\left(n^{5/2}\right)$ induced subgraphs, no pair of which
have the same numbers of vertices and edges. Improving on earlier results of Alon, Balogh, Kostochka and Samotij, in this paper we prove this conjecture.
\end{abstract}

\section{Introduction}

An induced subgraph of a graph is said to be \emph{homogeneous }if
it is a clique or independent set. A classical result in Ramsey theory,
proved in 1935 by Erd\H os and Szekeres~\cite{ES35}, is that every
$n$-vertex graph has a homogeneous subgraph with at least $\frac{1}{2}\log_{2}n$
vertices. On the other hand, Erd\H os~\cite{Erd47} famously used
the probabilistic method to prove that, for all $n$, there exists an $n$-vertex graph with no
homogeneous subgraph on $2\log_{2}n$ vertices. Despite significant
effort (see for example~\cite{FW81,BRSW12,Coh16,CZ16}), there are
no non-probabilistic constructions of graphs with comparably small
homogeneous sets. 

For some fixed $C$, say an $n$-vertex graph is \emph{$C$-Ramsey} if it has no
homogeneous subgraph of size $C\log_{2}n$. It is widely believed
that $C$-Ramsey graphs must in some sense resemble random graphs,
and this belief has been supported by a number of theorems showing
that certain ``richness'' properties characteristic of random graphs
hold for all $C$-Ramsey graphs. The first result of this type was
due to Erd\H os and Szemer\'edi~\cite{ES72}, who showed that $C$-Ramsey
graphs have density bounded away from 0 and 1. This basic result was
the foundation for a large amount of further research; over the years
many conjectures have been proposed and resolved as our understanding
of Ramsey graphs has improved. Improving a result of Erd\H os and
Hajnal~\cite{EH77}, Pr\"omel and R\"odl~\cite{PR99} proved that
for every constant $C$ there is $c>0$ such that every $n$-vertex
$C$-Ramsey graph contains every possible graph on $c\log_{2}n$ vertices
as an induced subgraph. Shelah~\cite{She98} proved that every $n$-vertex
$C$-Ramsey graph contains $2^{\Omega\left(n\right)}$ non-isomorphic
induced subgraphs. Answering a question of Erd\H os, Faudree and
S\'os~\cite{Erd92,Erd97}, Bukh and Sudakov~\cite{BS07} showed that
every $n$-vertex $C$-Ramsey graph has an induced subgraph with $\Omega\left(\sqrt{n}\right)$
different degrees. 

Despite this progress, there are several problems that have remained open
for quite some time. Two of them deal with the variation in the numbers of edges
and vertices of induced subgraphs of Ramsey graphs. For a graph $G$, let 
\begin{align*}
\Phi\left(G\right) & =\left\{ e\left(H\right):H\text{ is an induced subgraph of }G\right\} .
\end{align*}
Erd\H os and McKay~\cite{Erd92,Erd97} conjectured that for any $C$
there is $\delta>0$ such that for every $n$-vertex $C$-Ramsey graph
$G$, the set $\Phi\left(G\right)$ contains the interval $\left\{ 0,\dots,\delta n^{2}\right\} $.
Progress on this conjecture has come from two directions. First, Alon,
Krivelevich and Sudakov~\cite{AKS03} proved a weaker result with
$n^{\delta}$ in place of $\delta n^{2}$. Second, recently Narayanan,
Sahasrabudhe and Tomon~\cite{NST16} proposed a natural relaxation
of the Erd\H os--McKay conjecture, that $\Phi\left(G\right)$ contains
at least $\Omega\left(n^{2}\right)$ values (not necessarily forming an interval). 
They showed that $\left|\Phi\left(G\right)\right|=n^{2-o\left(1\right)}$, and
in~\cite{KS} we proved their conjecture that Ramsey graphs induce subgraphs of quadratically many sizes.

Next, for a graph $G$ let
\[
\Psi\left(G\right)=\left\{ \left(v\left(H\right),e\left(H\right)\right):H\text{ is an induced subgraph of }G\right\} .
\]
Strengthening a conjecture of Alon and Bollob\'as, it was conjectured
by Erd\H os, Faudree and S\'os that for any
fixed $C$ and any $n$-vertex $C$-Ramsey graph $G$, we have $\left|\Psi\left(G\right)\right|=\Omega\left(n^{5/2}\right)$. This problem appeared in several of Erd\H os' problem papers~\cite{Erd92,Erd92b,Erd97}.
Of course, since $\left|\Psi\left(G\right)\right|\ge\left|\Phi\left(G\right)\right|$,
our result in~\cite{KS} implies that $\left|\Psi\left(G\right)\right|=\Omega\left(n^{2}\right)$, which
was also proved much earlier by Alon and Kostochka~\cite{AK09}.
Until now, the best progress on the Erd\H os--Faudree--S\'os
conjecture was due to Alon, Balogh, Kostochka and Samotij~\cite{ABKS09},
who proved it with the exponent $2.369$ in place of $5/2$.

In this paper we establish the Erd\H os--Faudree--S\'os
conjecture, combining ideas from many of the aforementioned papers.
\begin{thm}
\label{conj:EFS}For any fixed $C>0$, there is $\gamma>0$ such that
every $n$-vertex $C$-Ramsey graph $G$ has $\left|\Psi\left(G\right)\right|=\gamma n^{5/2}$.
\end{thm}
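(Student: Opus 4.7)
The plan is to exhibit $\Omega(\sqrt n)$ distinct values of $k$ for which $|\Phi_{k}(G)| = \Omega(n^{2})$, where $\Phi_{k}(G) \subseteq \mathbb{N}$ denotes the set of edge counts realized by $k$-vertex induced subgraphs of $G$; this yields $|\Psi(G)| = \sum_{k} |\Phi_{k}(G)| = \Omega(n^{5/2})$.

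The first task is to isolate the following \emph{size-indexed} variant of the main result of \cite{KS}: for every $C'$-Ramsey graph $G'$ on $N$ vertices there is some size $k^{*} \in [N/3, 2N/3]$ (ideally, any size in a sub-interval of length $\Omega(N)$) such that $|\Phi_{k^{*}}(G')| = \Omega(N^{2})$. The argument of \cite{KS} operates on random induced subgraphs of a prescribed target size, so the $\Omega(N^{2})$ distinct edge counts produced there should already concentrate at a single specific size, and extracting this refinement ought to be a direct adaptation of that proof.

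With the size-indexed variant in hand, I would iterate. Define nested induced subgraphs $G = G_{0} \supseteq G_{1} \supseteq \cdots \supseteq G_{m}$ with $m = \lfloor \sqrt n \rfloor$, by removing one arbitrary vertex at each step. Since $\log(n-m) = (1-o(1))\log n$, each $G_{i}$ is still $(C+o(1))$-Ramsey, and applying the size-indexed variant to $G_{i}$ yields some $k_{i}^{*} \in [(n-i)/3, 2(n-i)/3]$ with $|\Phi_{k_{i}^{*}}(G_{i})| = \Omega(n^{2})$. If $k^{*}(G')$ can be chosen to depend monotonically on $|V(G')|$ (e.g.\ $k^{*}(G') = \lfloor |V(G')|/2 \rfloor$), then the $k_{i}^{*}$ are automatically distinct; otherwise, since the admissible interval at each step has length $\Omega(n) \gg m$, one can greedily select $k_{i}^{*}$ avoiding all previously chosen sizes. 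Since induced subgraphs of $G_{i}$ are induced subgraphs of $G$, we have $\Phi_{k_{i}^{*}}(G_{i}) \subseteq \Phi_{k_{i}^{*}}(G)$, so $|\Psi(G)| \geq \sum_{i=0}^{m} |\Phi_{k_{i}^{*}}(G)| \geq (m+1) \cdot \Omega(n^{2}) = \Omega(n^{5/2})$.

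The main obstacle is the size-indexed refinement of \cite{KS}. While the argument there does work with fixed-size random subsets, verifying that the $\Omega(N^{2})$ distinct edge counts genuinely remain at a single prescribed vertex count --- rather than dispersing across many sizes after the anticoncentration step --- will likely require a careful audit of the proof. Should a clean size-indexed statement turn out to be unavailable, a more elaborate approach would be necessary: combining a Bukh--Sudakov-type well-spread degree set $W \subseteq V(G)$ of size $\Theta(\sqrt n)$ with perturbations by attaching various subsets $T \subseteq W$ to a rich base substructure, so as to simultaneously shift vertex and edge counts in controllable ways. Proving that these bivariate shifts yield $\Omega(n^{5/2})$ genuinely distinct $(v,e)$-pairs without collisions would require a bivariate adaptation of the \cite{KS} anticoncentration machinery, and this bivariate analysis would be the technical heart of the proof.
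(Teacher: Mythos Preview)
Your primary plan has a fatal gap: the ``size-indexed variant'' of \cite{KS} that you need is simply false. You want, for every $C$-Ramsey graph $G'$ on $N$ vertices, a size $k^*$ with $|\Phi_{k^*}(G')| = \Omega(N^2)$. But the random graph $\mathbb{G}(n,1/2)$ is $O(1)$-Ramsey with probability $1-o(1)$, and a standard concentration-plus-union-bound argument (noted in the paper's introduction) shows that for \emph{every} $\ell$, the edge counts of $\ell$-vertex subsets all lie in an interval of length $O(n^{3/2})$, so $|\Phi_\ell(\mathbb{G}(n,1/2))| = O(n^{3/2})$. No audit of the \cite{KS} proof will extract $\Omega(N^2)$ edge counts at a single size, because that statement is false. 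This also kills your iteration scheme regardless of how the $k_i^*$ are chosen.

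The paper uses the opposite decomposition: it proves $|\Psi(\ell,G)| = \Omega(n^{3/2})$ for $\Omega(n)$ values of $\ell$ (via Lemma~\ref{lem:per-l}), and this is tight in light of the random graph example. The technical heart is genuinely bivariate in the sense you gesture at in your fallback, but it is far more intricate than attaching a Bukh--Sudakov degree set: one prepares sets $W^-,W^+,U^0$ with a discrepancy property, switches between $W^-$ and $W^+$ to get $\Omega(\sqrt n)$ well-separated ``base'' subgraphs, and then augments each base by subsets of a carefully constructed hypergraph matching (built using the sunflower lemma and a richness property) to produce $\Omega(n)$ edge counts per base. The separation between bases and the control over augmentations must be maintained simultaneously through several layers of randomness, which requires the switching-analysis lemmas in Section~\ref{sec:basic-tools}. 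Your fallback sketch identifies the right general shape but none of the specific machinery that makes the collision analysis go through.
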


As mentioned in~\cite{Erd92b}, we remark that the order of magnitude $n^{5/2}$ is best-possible.
This can be seen by considering a random graph $\GG\left(n,1/2\right)$
where each edge is present independently with probability $1/2$ (it
is well known that this is an $O\left(1\right)$-Ramsey graph with
probability $1-o\left(1\right)$). Briefly, one can use a concentration
inequality to show that with probability $1-o\left(2^{n}\right)$,
the number of edges in any fixed vertex subset of $\GG\left(n,1/2\right)$
lies in an interval of length $O\left(n^{3/2}\right)$, and by the
union bound it follows that with probability $1-o\left(1\right)$,
for each $0\le\ell\le n$ there are at most $O\left(n^{3/2}\right)$
different numbers of edges among $\ell$-vertex induced subgraphs.
This proves that $\left|\Psi\left(\GG\left(n,1/2\right)\right)\right|=O\left(n^{5/2}\right)$
with probability $1-o\left(1\right)$. See also~\cite[Section~4]{AK09}
for further discussion of $\Psi\left(\GG\left(n,1/2\right)\right)$.

The rest of the paper is organised as follows. In \cref{sec:discussion} we give a very high-level outline of the basic ideas of our proof and briefly compare it to previous work. In \cref{sec:basic-tools} we collect a number of basic tools which will be useful for our proof (some of which are standard, and some of which are new), and in  \cref{sec:EFS-proof} we present the technical details of our proof. Finally, in \cref{sec:concluding} we discuss some potential further directions of research.

\subsection{Notation and basic definitions}\label{subsec:definitions}

We use standard asymptotic notation throughout. For functions $f=f\left(n\right)$
and $g=g\left(n\right)$ we write $f=O\left(g\right)$ to mean that there
is a constant $C$ such that $\left|f\right|\le C\left|g\right|$,
we write $f=\Omega\left(g\right)$ to mean there is a constant $c>0$
such that $f\ge c\left|g\right|$ for sufficiently large $n$, we
write $f=\Theta\left(g\right)$ to mean that $f=O\left(g\right)$
and $f=\Omega\left(g\right)$, and we write $f=o\left(g\right)$ or
$g=\omega\left(f\right)$ to mean that $f/g\to0$ as $n\to\infty$. All asymptotics
are as $n\to\infty$ unless stated otherwise. Floor and ceiling symbols
will be systematically omitted where they are not crucial.

For two multisets $A$ and $B$, let $A\triangle B$ be the set of
elements which have different multiplicities in $A$ and $B$ (so
if $A$ and $B$ are ordinary sets, then $A\triangle B$ is the ordinary
symmetric difference $\left(A\setminus B\right)\cup\left(B\setminus A\right)$).
For a set $A$, we denote by $\binom{A}{k}$ the set of all $k$-subsets
of elements of $A$.

We also use standard graph theoretic notation throughout. In particular,
in a graph, $e\left(A\right)$ is the number of edges which are contained
inside a vertex subset $A$, and $e\left(A,B\right)$ is the number
of edges between two disjoint vertex subsets $A$ and $B$. For a
vertex $v$ and a set of vertices $A$, we denote the set of neighbours
of $v$ in $A$ by $N_{A}\left(v\right)=N\left(v\right)\cap A$ and
we denote the degree of $v$ into $A$ by $d_{A}\left(v\right)=\left|N_{A}\left(v\right)\right|$.

We also make some less standard graph theoretic definitions that will
be convenient for the proof. For a set of vertices $\boldsymbol{v}=\left\{ v_{1},\dots,v_{k}\right\} $,
let $N\left(\boldsymbol{v}\right)$ (respectively $N_{U}\left(\boldsymbol{v}\right)$)
be the multiset union of $N\left(v_{1}\right),\dots,N\left(v_{k}\right)$
(respectively, of $N_{U}\left(v_{1}\right),\dots,N_{U}\left(v_{k}\right)$.
Let $d\left(v\right)=d\left(v_{1}\right)+\dots+d\left(v_{k}\right)$
(respectively $d_{U}\left(\boldsymbol{v}\right)=d_{U}\left(v_{1}\right)+\dots+d_{U}\left(v_{k}\right)$)
be the size of $N\left(\boldsymbol{v}\right)$ (respectively, of $N_{U}\left(\boldsymbol{v}\right)$),
accounting for multiplicity.

Finally, we remark that we will often use variable names of the form
$\n A$ to denote the size of a set $A$. (This is really only a convention, not a definition; we will often introduce $\n A$ before the set $A$ has actually been defined).

\section{Discussion and main ideas of the proof}\label{sec:discussion}

According to Erd\H os~\cite{Erd92}, at the time the problem was
proposed, he and S\'os had already proved the weaker bound that $\left|\Psi\left(G\right)\right|=\Omega\left(n^{3/2}\right)$
for $O\left(1\right)$-Ramsey graphs. In fact, there are at least
two reasonably simple ways to prove this weak bound, and both are
instructive for our proof. To describe these, we define 
\[
\Psi\left(\ell,G\right)=\left\{ e\left(H\right):H\text{ is an }\ell\text{-vertex induced subgraph of }G\right\} .
\]
To prove that $\left|\Psi\left(G\right)\right|=\Omega\left(n^{3/2}\right)$,
it suffices to prove that $\left|\Psi\left(\ell,G\right)\right|=\Omega\left(\sqrt{n}\right)$
for each of $\Omega\left(n\right)$ different choices of $\ell$.

One way to do this, described by Alon and Kostochka~\cite{AK09},
is to use a discrepancy theorem and a switching argument. Erd\H os,
Goldberg, Pach and Spencer~\cite{EGPS88} proved that in any $n$-vertex
graph $G$ with density bounded away from 0 and 1, and any $\alpha\in\left(0,1\right)$
bounded away from 0 and 1, there are two induced subgraphs $G\left[W^{-}\right]$
and $G\left[W^{+}\right]$, with $\left|W^{-}\right|=\left|W^{+}\right|=\alpha n$,
such that $e\left(W^{+}\right)-e\left(W^{-}\right)=\Omega\left(n^{3/2}\right)$.
Recalling the Erd\H os--Szemer\'edi theorem that $O\left(1\right)$-Ramsey
graphs have density bounded away from 0 and 1, we can find such $W^{-}$
and $W^{+}$ in any $n$-vertex $O\left(1\right)$-Ramsey graph $G$.
One can then obtain a sequence of induced subgraphs $G\left[W_{0}\right],\dots,G\left[W_{\alpha n}\right]$
by starting with $W_{0}=W^{-}$ and switching vertices one-by-one
from $W^{-}$ into $W^{+}$. Formally, fix an ordering $w_{1}^{-},\dots,w_{\alpha n}^{-}$
of $W^{-}$ and an ordering $w_{1}^{+},\dots,w_{\alpha n}^{+}$ of
$W^{+}$ and let 
\[
W_{i}=\left\{ w_{1}^{-},\dots,w_{\alpha n-i}^{-}\right\} \cup\left\{ w_{1}^{+},\dots,w_{i}^{+}\right\} .
\]
Then, we have $\left|e\left(G\left[W_{i}\right]\right)-e\left(G\left[W_{i-1}\right]\right)\right|=\left|d_{W_{i}}\left(w_{i}^{+}\right)-d_{W_{i-1}}\left(w_{\alpha n-i+1}^{-}\right)\right|\le\alpha n$,
so as $e\left(G\left[W_{i}\right]\right)$ varies over an interval
of length $\Omega\left(n^{3/2}\right)$, it must attain $\Omega\left(\sqrt{n}\right)$
different values. This proves $\left|\Psi\left(\alpha n,G\right)\right|=\Omega\left(\sqrt{n}\right)$,
and we can apply this fact for $\Omega\left(n\right)$ different choices
of $\alpha=\ell/n$, proving that $\left|\Psi\left(G\right)\right|=\Omega\left(n^{3/2}\right)$.
We remark that this basic approach was refined by Alon and Kostochka
\cite{AK09} and by Alon, Balogh, Kostochka and Samotij~\cite{ABKS09},
to prove stronger bounds.

A second completely different way to prove that $\left|\Psi\left(G\right)\right|=\Omega\left(n^{3/2}\right)$,
due to Bukh and Sudakov~\cite[Proposition~3.1]{BS07}, is to make
use of the fact that Ramsey graphs have induced subgraphs with many
distinct degrees. Specifically, what Bukh and Sudakov proved was that
in any $O\left(1\right)$-Ramsey graph, there is an induced subgraph
with $\Omega\left(n\right)$ vertices which is \emph{diverse }in the
sense that most pairs of vertices have very different neighbourhoods
(to be precise, the symmetric difference of their neighbourhoods has
size $\Omega\left(n\right)$). In an $n'$-vertex diverse graph (with
$n'=\Omega\left(n\right)$), consider a random subset $U$ of $\alpha n'$
vertices (with $\alpha\in\left(0,1\right)$ bounded away from 0 and
1). By the diversity assumption, for most pairs of vertices $u,v$
their degrees $d_{U}\left(u\right)$, $d_{U}\left(v\right)$ into
$U$ are not too strongly correlated, and the probability they are
exactly equal turns out to be $O\left(1/\sqrt{n}\right)$. (A simple
intuitive reason for this probability is that $d_{U}\left(u\right)-d_{U}\left(v\right)$
is approximately normally distributed with standard deviation $\Theta\left(\sqrt{n}\right)$).
A simple linearity-of-expectation argument then shows that there is
an outcome of $G\left[U\right]$ with $\Omega\left(\sqrt{n}\right)$
different degrees. Finally, given an $\alpha n'$-vertex graph with
$\Omega\left(\sqrt{n}\right)$ different degrees, we can obtain $\left(\alpha n'-1\right)$-vertex
graphs with $\Omega\left(\sqrt{n}\right)$ different numbers of edges,
simply by choosing different vertices to delete. This proves that
$\left|\Psi\left(\alpha n'-1,G\right)\right|=\Omega\left(\sqrt{n}\right)$,
and again applying this fact for $\Omega\left(n\right)$ different
choices of $\alpha=\ell/n'$, it follows that $\left|\Psi\left(G\right)\right|=\Omega\left(n^{3/2}\right)$.

Observe that both the approaches described above seem to be somewhat
complementary. The discrepancy/switching argument, in its most basic
form, gives $\Omega\left(\sqrt{n}\right)$ different values of $e\left(G\left[U\right]\right)$
that are distributed fairly evenly over a range of length $\Omega\left(n^{3/2}\right)$.
On the other hand, the diversity/anticoncentration argument gives
$\Omega\left(\sqrt{n}\right)$ values of $e\left(G\left[U\right]\right)$
contained in an interval of length $O\left(n\right)$. It is natural
to try to combine both types of arguments to obtain better bounds.

In fact, recent developments bounding $\left|\Phi\left(G\right)\right|$
due to Narayanan, Sahasrabudhe and Tomon~\cite{NST16}, and ourselves
\cite{KS}, make this idea seem even more promising. In~\cite{NST16},
the authors made the simple observation (using the pigeonhole principle)
that in any $n$-vertex graph $G$, there is a set $A$ of $\sqrt{n}$
vertices with degrees lying in an interval of length $\sqrt{n}$.
If $G$ is diverse, and $U$ is a random vertex set of linear size,
then the degrees $d_{U}\left(x\right)$, for $x\in A$, are likely
to take $n^{1/2-o\left(1\right)}$ different values, very tightly
packed in an interval of length $O\left(\sqrt{n}\right)$. By augmenting
$U$ with different combinations of vertices in $A$, we can obtain
subgraphs of many different sizes, all lying in a fixed interval of
length $O\left(n\right)$. Adapting these ideas to our context, and
using the further refinements in~\cite{KS}, one can prove that we
can actually obtain $\Omega\left(n\right)$ values of $e\left(G\left[U\cup Y\right]\right)$
among subsets $Y\subseteq A$ of a certain fixed size, tightly packed
in an interval of length $O\left(n\right)$.

So, as a rough plan to prove \cref{conj:EFS}, one might start with
vertex subsets $W^{-},W^{+}$ of fixed size $\ell=\Theta\left(n\right)$
such that $e\left(W^{+}\right)-e\left(W^{-}\right)=\Omega\left(n^{3/2}\right)$,
provided by a discrepancy theorem. We would then switch between $W^{-}$
and $W^{+}$ to obtain subsets $W_{1},\dots,W_t$ such that among
the $e\left(W_{i}\right)$ there are $\Omega\left(\sqrt{n}\right)$
different values $e(W_{i_1}),e(W_{i_2}),\dots$ each separated by a distance of $\Omega\left(n\right)$.
One might then hope to somehow use diversity and anticoncentration
to show that each such $W_{i_j}$ has an ``augmenting set'' $A_{j}$
such that $e\left(W_{i_j}\cup Y\right)$ takes $\Omega\left(n\right)$
different values as $Y$ varies over subsets of $A_{j}$ with some fixed size $f\left(n\right)$.
We would moreover hope that for each $j$, the augmented
values $e\left(W_{i_j}\cup Y\right)$ fall in a specific interval of
length $O\left(n\right)$ that does not intersect the corresponding
interval for any other $j$. This would prove that $\left|\Psi\left(\ell+f\left(n\right),G\right)\right|=\Omega\left(n^{3/2}\right)$,
and this fact could be applied for $\Omega\left(n\right)$ different
choices of $\ell$ to prove that $\left|\Psi\left(G\right)\right|=\Omega\left(n^{5/2}\right)$.

There are several serious challenges associated with this kind of
approach. First, we need some way to introduce a random set $U$ of
linear size in order to use anticoncentration for our augmenting sets.
We have very little control over the number of edges in such a random
set (this number has variance $\Theta\left(n^{3}\right)$), so it
seems we must use the same random set for each $W_{i}$, and apply our switching argument \emph{after} our random set has been exposed. However, it seems that doing this would introduce new complications: the anticoncentration probabilities we are interested in are of order $O(1/\sqrt n)$, which is not small enough to apply the union bound over all $i$, given a single source of randomness. (It does not suffice to prove things for \emph{most} $i$, because the subsequence $(i_j)$ arising from the switching argument comprises a negligible fraction of all $i$).

Our approach
is to first prepare vertex sets $U^{0},W^{-},W^{+}$, each of a certain
linear size, such that 
\[
\left(e\left(W^{+}\right)+\alpha e\left(W^{+},U^{0}\right)\right)-\left(e\left(W^{-}\right)+\alpha e\left(W^{-},U^{0}\right)\right)=\Omega\left(n^{3/2}\right),
\]
for some $\alpha\in\left(0,1\right)$. Then, as above, we switch between
$W^{-}$ and $W^{+}$ to obtain a sequence of sets $W_{i}$, and identify
a well-separated subsequence of $\Omega\left(\sqrt{n}\right)$ sets
$W_{i_{j}}$ such that
\[
\left(e\left(W_{i_{j}}\right)+\alpha e\left(W_{i_{j}},U^{0}\right)\right)-\left(e\left(W_{i_{j-1}}\right)+\alpha e\left(W_{i_{j-1}},U^{0}\right)\right)=\Omega\left(n\right)
\]
for each $j$. Only then do we choose a random subset $U\subseteq U^{0}$
of size $\alpha\left|U^{0}\right|$, which we may use for anticoncentration.
By construction, the $e\left(W_{i_{j}}\cup U\right)$ are well-separated
in expectation, and the added randomness does not too severely disturb
the increments $e\left(W_{i}\cup U\right)-e\left(W_{i-1}\cup U\right)$.
Because we do not have any real control over the spacing
of the $i_{j}$, we must additionally carefully compensate for the buildup of deviations
caused by ``large gaps'' between the $i_{j}$.

Of course, before we even expose the random set $U$ we need to decide
which vertices should be in the augmenting sets $A_{j}$. Recall that
we would like to be able to use anticoncentration to obtain $\Omega\left(n\right)$
subgraph sizes of the form $e\left(W_{i_{j}}\cup U\cup Y\right)$,
for $Y\subseteq A_{j}$ of a fixed size. Provided that we have been
carefully maintaining appropriate diversity properties through the
construction, the only real requirement for this is that the $A_{j}$
are sufficiently large (of size at least $\Omega\left(\sqrt{n}\right)$).
However, ensuring that the different $A_{j}$ do not ``interfere''
with each other is a much more delicate task. With the pigeonhole
principle, for each $j$ we can show that there are $\sqrt{n}$ vertices
$v$ such that each $d_{W_{i_{j}}}\left(v\right)+\alpha d_{U^{0}}\left(v\right)$
is contained in an interval $I_{j}$ of length $\sqrt{n}$, and we
might hope to use such a set of vertices as our augmenting set $A_{j}$.
However, the pigeonhole principle gives us no guarantee of ``consistency''
between different $j$, and it might happen that the intervals $I_{j}$
jump around in such a way that there is a lot of overlap between the
augmented values $e\left(U\cup W_{i_{j}}\cup Y\right)$ for different
$j$. It seems to be quite difficult to carefully choose the $A_{j}$
in such a way that the $I_{j}$ are well-behaved.

Instead, we sidestep this issue, with the insight that it
is not actually necessary for all the vertices in $A_{j}$ to have
similar degrees into $W_{i_{j}}\cup U$; it suffices that $A_{j}$
has a large hypergraph matching $M_{j}\subseteq\binom{A_{j}}{k}$,
such that the \emph{sums} $d_{W_{i_{j}}\cup U}\left(\boldsymbol{v}\right)=d_{W_{i_{j}}\cup U}\left(v_{1}\right)+\dots+d_{W_{i_{j}}\cup U}\left(v_{k}\right)$
are similar for each $\boldsymbol{v}=\left\{ v_{1},\dots,v_{k}\right\} \in M_{j}$.
We may treat the edges of $M_{j}$ as we would treat single vertices,
forming our augmented values $e\left(U\cup W_{i_{j}}\cup\bigcup_{\boldsymbol{v}\in Z}\boldsymbol{v}\right)$
from subsets $Z\subseteq M_{j}$.

Being able to use a hypergraph matching instead of a set of vertices
affords us a lot of flexibility. For fixed $K\in\NN$, there are $\Omega\left(n^{K}\right)$
sets of $K$ vertices, and by the pigeonhole principle, $\Omega\left(n^{K-3}\right)$
of these $K$-sets $\boldsymbol{v}$ have exactly the same values
of $d_{W^{-}}\left(\boldsymbol{v}\right)$, the same values of $d_{W^{+}}\left(\boldsymbol{v}\right)$
and the same values of $d_{U^{0}}\left(\boldsymbol{v}\right)$. If
we obtain the $W_{i}$ by switching \emph{randomly} between $W^{-}$
and $W^{+}$, then we can show that the degrees $d_{W_{i}}\left(\boldsymbol{v}\right)$
are concentrated around a certain convex combination of $d_{W^{-}}\left(\boldsymbol{v}\right)$
and $d_{W^{+}}\left(\boldsymbol{v}\right)$. In this way we can produce
a collection of $K$-sets $\boldsymbol{v}$ such that the degrees
$d_{W_{i_{j}}\cup U}\left(\boldsymbol{v}\right)$ are quite well-behaved.

Of course, these $K$-sets are not disjoint, but for large $K$ we
may apply a weak form of the \emph{sunflower lemma} of Erd\H os and
Rado, to produce a hypergraph matching $M\subseteq\binom{V}{k}$ (with
$k\le K$) of almost linear size, which has similarly well-behaved
degrees. With this as a starting point, it becomes feasible to use
the pigeonhole principle to obtain appropriate sub-matchings $M_{j}'\subseteq M$,
and modulo a lot of technical details we are able to more or less
implement the plan described above. To summarise, for each of $\sqrt{n}$
choices of $j$ we use anticoncentration and a generalised notion
of diversity to produce $\Omega\left(n\right)$ values of $e\left(U\cup W_{i_{j}}\cup\bigcup_{\boldsymbol{v}\in Z}\boldsymbol{v}\right)$
among $Z\subseteq M_{j}'$ of a certain fixed size, in such a way
that there is little overlap between the values for different $j$.
This gives us $\Omega\left(n^{3/2}\right)$ subgraphs with the same
number of vertices and different numbers of edges, and varying the
size of $U$ allows us to prove that $\left|\Psi\left(G\right)\right|=\Omega\left(n^{5/2}\right)$,
as desired.

\section{Basic tools}
\label{sec:basic-tools}

\subsection{Diverse neighbourhoods in Ramsey graphs}\label{subsec:diversity}

In~\cite{BS07} Bukh and Sudakov introduced the notion of \emph{diversity}:
an $n$-vertex graph is said to be diverse if $\left|N\left(x\right)\triangle N\left(y\right)\right|=\Omega\left(n\right)$
for most pairs of distinct vertices $x,y$. We will need a slightly
stronger notion than diversity, which we introduced in~\cite{KS}.
Say an $n$-vertex graph is \emph{$\left(\delta,\varepsilon\right)$-rich}
if for any vertex subset $W$ with $\left|W\right|\ge\delta n$, at
most $n^{1/5}$ vertices $v$ have $\left|N\left(v\right)\cap W\right|<\varepsilon\left|W\right|$
or $\left|\overline{N\left(v\right)}\cap W\right|<\varepsilon\left|W\right|$. Note that a graph which is $(\delta,\varepsilon)$-rich is also $(\delta',\varepsilon)$-rich, if $\delta'>\delta$.  We remark that a slightly different definition of richness appeared in the published version of this paper, which was not quite suitable for our application. We thank Mantas Baksys and Xuanang Chen for bringing this to our attention.
The next lemma appears as~\cite[Lemma~4]{KS}, showing that Ramsey
graphs contain large rich induced subgraphs.
\begin{lem}
\label{lem:rich}For any $C,\delta>0$, there exist $\varepsilon=\varepsilon\left(C\right)>0$
and $c=c\left(C,\delta\right)>0$ and $n_0=n_0(\delta)$ such that if $n\ge n_0$ then every $n$-vertex $C$-Ramsey
graph contains a $\left(\delta,\varepsilon\right)$-rich induced subgraph
on at least $cn$ vertices.
\end{lem}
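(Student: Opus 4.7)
I would approach this via a strengthening of the Bukh--Sudakov diversity argument~\cite{BS07}. The first move is to identify a more tractable sufficient condition for richness. A natural candidate is a \emph{localized} version of diversity: say an $m$-vertex induced subgraph $H$ is \emph{$(\delta,\eta)$-strongly diverse} if for every vertex subset $W\subseteq V(H)$ with $|W|\ge\delta m$, all but at most $m^{2\delta}$ unordered pairs $\{u,v\}\subseteq V(H)$ satisfy $|(N(u)\triangle N(v))\cap W|\ge\eta|W|$. One checks by a simple pairing argument that $(\delta,\eta)$-strong diversity implies $(\delta,\eta/2)$-richness: if $B$ is any set of vertices all with $|N(v)\cap W|<(\eta/2)|W|$, then for any $u,v\in B$ we have $|(N(u)\triangle N(v))\cap W|\le |N(u)\cap W|+|N(v)\cap W|<\eta|W|$, so every pair in $B$ violates strong diversity on $W$. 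Hence $\binom{|B|}{2}\le m^{2\delta}$, yielding $|B|=O(m^\delta)$, and the complementary direction (vertices with $|N(v)\cap W|>(1-\eta/2)|W|$) is symmetric.

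It therefore suffices to prove that every $n$-vertex $C$-Ramsey graph contains a $(\delta,\eta)$-strongly diverse induced subgraph on at least $cn$ vertices, for $\eta=\eta(C)$ and $c=c(C,\delta)$. I would establish this by contradiction and iterative extraction, maintaining a nested sequence $V_0\supseteq V_1\supseteq\cdots$ together with a growing homogeneous set $H_i$ that is either entirely adjacent or entirely non-adjacent to every vertex of $V_i$. At each step, either $G[V_i]$ is already strongly diverse (contradicting our assumption once $|V_i|\ge cn$), or there exists a witness subset $W_i\subseteq V_i$ together with polynomially many "bad" pairs of vertices having small localized symmetric difference into $W_i$. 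The crucial point is that this polynomial quantity of bad pairs, processed via an Erd\H os--Szekeres argument on an auxiliary graph encoding them (together with a majority argument on whether the joint neighborhoods into $W_i$ are skewed small or large), produces a homogeneous subset $J_i$ of size $\Omega(\log|V_i|)$ which can be appended to either the clique or independent-set "thread" of the growing extension. We then pass to $V_{i+1}\subseteq W_i$ consisting of suitable common neighbors or non-neighbors of $J_i$, shrinking by at most a controlled factor.

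The main obstacle is the quantitative interplay: $\eta$ must depend only on $C$ while the iteration reaches a homogeneous set of size $C\log_2 n$ within only $O(C/\delta)$ steps, so that the cumulative shrinkage of $|V_i|$ remains bounded by an absolute constant depending on $C$ and $\delta$. The cleanest route is likely a two-stage reduction: first invoke the original Bukh--Sudakov lemma as a black box to pass to a globally diverse induced subgraph on $\Omega(n)$ vertices, and then run an additional round of extraction that upgrades global pairwise diversity to the localized strong-diversity condition for all large $W$. Verifying that this upgrade step costs only a constant factor in vertex count --- and carefully tracking that the relationship between $\eta$, the $n^\delta$ counting threshold in the definition of richness, and the $m^{2\delta}$ pair threshold above are mutually consistent --- is where I expect most of the technical effort to lie.
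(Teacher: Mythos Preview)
The paper does not actually prove this lemma; it is quoted verbatim from~\cite{KS} (as Lemma~4 there), so there is no in-paper proof to compare against. I will therefore evaluate your proposal on its own merits.

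Your reduction from ``strong diversity'' to richness is correct: if $B$ is the set of vertices with $|N(v)\cap W|<(\eta/2)|W|$, then every pair in $B$ violates the localized diversity condition, so $\binom{|B|}{2}\le m^{2\delta}$ and hence $|B|=O(m^\delta)$, and similarly for the complementary set.

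The genuine gap is in the iterative extraction of strong diversity. When strong diversity fails on $V_i$ (with $|V_i|=m$), all you obtain is a set $W_i$ for which the number of bad pairs exceeds $m^{2\delta}$. But $m^{2\delta}$ edges in an auxiliary graph on $m$ vertices is far too little structure: for $\delta<1/2$ the average degree $2m^{2\delta-1}$ is $o(1)$, so you cannot even guarantee two bad pairs sharing a vertex, let alone extract a homogeneous set $J_i$ of size $\Omega(\log m)$ in $G$. Furthermore, a bad pair $\{u,v\}$ only says $|(N(u)\triangle N(v))\cap W_i|$ is small; both $|N(u)\cap W_i|$ and $|N(v)\cap W_i|$ could sit near $|W_i|/2$, so your ``majority argument on whether the joint neighborhoods are skewed small or large'' has no footing. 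The detour through pairs throws away exactly the information you need.

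The remedy is to drop the strong-diversity intermediary and iterate directly on richness. If $G[V_i]$ is not $(\delta,\varepsilon)$-rich, then by definition there is some $W_i\subseteq V_i$ with $|W_i|\ge\delta|V_i|$ and a set $B_i$ of more than $|V_i|^\delta$ \emph{vertices} (not pairs), all of which have, say, $|N(v)\cap W_i|<\varepsilon|W_i|$. Now Erd\H os--Szekeres applied to $G[B_i]$ yields a homogeneous set $J_i$ of size $\Omega(\delta\log|V_i|)$; since every $v\in J_i$ has at most $\varepsilon|W_i|$ neighbours in $W_i$, the common non-neighbourhood $V_{i+1}=W_i\setminus\bigcup_{v\in J_i}N(v)$ has size at least $(1-|J_i|\varepsilon)|W_i|$, and $J_i$ can be appended to the appropriate clique/independent-set thread. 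This is the shape of the argument in~\cite{KS}; your two-stage ``upgrade from global diversity'' suggestion is unnecessary once you work with bad vertices rather than bad pairs.
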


In~\cite{KS}, the reason we introduced $\left(\delta,\varepsilon\right)$-richness
was to derive a type of diversity for pairs of vertices. Here we will
need a type of diversity for larger sets of vertices. (recall from
\cref{subsec:definitions} the non-standard multiset definitions of
$N\left(\boldsymbol{x}\right),N\left(\boldsymbol{y}\right)$ and $N\left(\boldsymbol{x}\right)\triangle N\left(\boldsymbol{y}\right)$).
\begin{lem}
\label{lem:diversity-tuples}Fix $k\in\NN$ and let $G$ be a $\left(\delta,\varepsilon\right)$-rich
graph on an $n$-vertex set $V$. Then, for each $\boldsymbol{x}\in\binom{V}{k}$
with $\left|\bigcap_{v\in\boldsymbol{x}}N\left(v\right)\right|\ge\delta n$,
one cannot find a collection of $n^{1/5}$ vertex subsets $\boldsymbol{y}\in\binom{V}{k}$
(disjoint from $\boldsymbol{x}$ and each other) such that $\left|N\left(\boldsymbol{x}\right)\triangle N\left(\boldsymbol{y}\right)\right|<\delta\varepsilon n$.
\end{lem}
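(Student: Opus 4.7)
The plan is to reduce the claim directly to $(\delta,\varepsilon)$-richness, applied with $W=\bigcap_{v\in\boldsymbol{x}} N(v)$ as the target set. By hypothesis $|W|\ge \delta n$, so this is a legitimate application.

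The key structural observation is that every $w\in W$ is adjacent to all $k$ vertices of $\boldsymbol{x}$, so $w$ has multiplicity exactly $k$ in the multiset $N(\boldsymbol{x})$. Consequently, if $w\in W$ fails to be adjacent to some $y_j\in\boldsymbol{y}$, then its multiplicity in $N(\boldsymbol{y})$ is at most $k-1$, so $w\in N(\boldsymbol{x})\triangle N(\boldsymbol{y})$. Under the hypothesis $|N(\boldsymbol{x})\triangle N(\boldsymbol{y})|<\delta\varepsilon n$, we therefore obtain, for each individual $y_j\in\boldsymbol{y}$,
\[
|\overline{N(y_j)}\cap W| \;\le\; |N(\boldsymbol{x})\triangle N(\boldsymbol{y})| \;<\; \delta\varepsilon n \;\le\; \varepsilon|W|,
\]
where the last step uses $|W|\ge\delta n$. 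Similarly, every $x\in\boldsymbol{x}$ trivially satisfies $|\overline{N(x)}\cap W|=0<\varepsilon|W|$, since $W\subseteq N(x)$ by definition of $W$.

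The conclusion now follows by a pure counting argument. Suppose for contradiction we had $n^\delta$ pairwise disjoint sets $\boldsymbol{y}\in\binom{V}{k}$, each also disjoint from $\boldsymbol{x}$, all satisfying the symmetric-difference bound. Their $kn^\delta$ constituent vertices, together with the $k$ vertices of $\boldsymbol{x}$, form a collection of $k(n^\delta+1)>n^\delta$ distinct vertices $v$ satisfying $|\overline{N(v)}\cap W|<\varepsilon|W|$, directly contradicting $(\delta,\varepsilon)$-richness applied to $W$. The argument is essentially just bookkeeping; the only conceptual ingredient is the use of multiset (rather than set) neighbourhoods, whereby a vertex of $W$ failing to be adjacent to even a single $y_j$ already counts toward $N(\boldsymbol{x})\triangle N(\boldsymbol{y})$. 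This is precisely what allows us to pass from a statement about the tuple $\boldsymbol{y}$ to a statement about each individual $y_j$, and there is no real obstacle beyond identifying the correct set $W$ to feed into richness.
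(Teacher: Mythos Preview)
Your proof is correct and follows essentially the same approach as the paper: set $W=\bigcap_{v\in\boldsymbol{x}}N(v)$, observe that each individual $y_j\in\boldsymbol{y}$ satisfies $|\overline{N(y_j)}\cap W|<\varepsilon|W|$, and contradict richness by counting. Your additional remark that the vertices of $\boldsymbol{x}$ themselves also satisfy $|\overline{N(x)}\cap W|<\varepsilon|W|$ is a small extra observation that makes the counting $k(n^\delta+1)>n^\delta$ work even when $k=1$; the paper's version simply counts the $kn^\delta$ vertices coming from the $\boldsymbol{y}$'s.
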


\begin{proof}
Let $W=\bigcap_{v\in\boldsymbol{x}}N\left(v\right)$. Suppose the
statement of the lemma were false, and such a collection $Y$ of vertex
subsets existed. Then, for each $y\in\boldsymbol{y}$, for $\boldsymbol{y}\in Y$,
we would have $\left|\overline{N\left(y\right)}\cap W\right|\le\left|N\left(\boldsymbol{x}\right)\triangle N\left(\boldsymbol{y}\right)\right|<\varepsilon\left|W\right|$,
and the set of all such $y$ would contradict $\left(\delta,\varepsilon\right)$-richness.
\end{proof}
\cref{lem:diversity-tuples} only applies to $\boldsymbol{x}\in\binom{V}{k}$
such that $\bigcap_{v\in\boldsymbol{x}}N\left(v\right)$ is large.
In order to apply it, we next show that in a rich graph, $\bigcap_{v\in\boldsymbol{x}}N\left(v\right)$
is large for almost all $\boldsymbol{x}\in\binom{V}{k}$.
\begin{lem}
\label{lem:few-bad}Fix $k\in\NN$ and let $G$ be a $\left(\delta,\varepsilon\right)$-rich
graph on an $n$-vertex set $V$, for $\delta\le\varepsilon^{k-1}$.
Then there are at most $n^{k-1+1/5}$ subsets $\boldsymbol{v}\in\binom{V}{k}$
such that $\left|\bigcap_{v\in\boldsymbol{v}}N\left(v\right)\right|<\varepsilon^{k}n$.
\end{lem}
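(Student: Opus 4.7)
The plan is to induct on $k$. The base case $k=1$ is immediate: a singleton $\{v\}$ satisfies $|N(v)|<\varepsilon n$ only if it is one of the at most $n^{\delta}$ exceptional vertices guaranteed by $(\delta,\varepsilon)$-richness applied with $W=V$ (a valid application since $|V|=n\ge\delta n$). So all the work sits in the inductive step.

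For the inductive step, I would fix an arbitrary linear order on $V$ and, for each bad $k$-subset $\boldsymbol{v}=\{v_{1}<\dots<v_{k}\}$, isolate the prefix $\boldsymbol{u}=\{v_{1},\dots,v_{k-1}\}$ together with its common neighbourhood $W_{\boldsymbol{u}}=\bigcap_{u\in\boldsymbol{u}}N(u)$, and case-split on whether $|W_{\boldsymbol{u}}|<\varepsilon^{k-1}n$ or not. In Case A, where $|W_{\boldsymbol{u}}|<\varepsilon^{k-1}n$, the subset $\boldsymbol{u}$ is itself bad for the $(k-1)$-version of the lemma; since $\delta\le\varepsilon^{k-1}\le\varepsilon^{k-2}$, the induction hypothesis bounds the number of such $\boldsymbol{u}$ by $n^{k-2+\delta}$, and each extends to at most $n$ choices of $v_{k}$, contributing at most $n^{k-1+\delta}$ bad subsets. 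In Case B, where $|W_{\boldsymbol{u}}|\ge\varepsilon^{k-1}n\ge\delta n$, I would apply $(\delta,\varepsilon)$-richness directly to $W_{\boldsymbol{u}}$: at most $n^{\delta}$ vertices $v$ satisfy $|N(v)\cap W_{\boldsymbol{u}}|<\varepsilon|W_{\boldsymbol{u}}|$, and the badness of $\boldsymbol{v}$ forces $|N(v_{k})\cap W_{\boldsymbol{u}}|<\varepsilon^{k}n\le\varepsilon\cdot\varepsilon^{k-1}n\le\varepsilon|W_{\boldsymbol{u}}|$, so $v_{k}$ must be exceptional. Summing over the at most $\binom{n}{k-1}\le n^{k-1}$ choices of $\boldsymbol{u}$ again yields at most $n^{k-1+\delta}$ bad subsets.

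Adding the two cases gives a bound of at most $2n^{k-1+\delta}$. The only real obstacle is this bookkeeping factor of $2$ (more generally a $k$-dependent constant $C_{k}$): to recover the stated bound $n^{k-1+\delta}$ precisely, one can either interpret $n^{\delta}$ up to asymptotic constants in the usual way, or run the induction with exponent $\delta/2$ so that for $n$ sufficiently large $C_{k}n^{k-1+\delta/2}\le n^{k-1+\delta}$. Since $k$ is fixed and the lemma is only used downstream to bound things of order $n^{k-1+o(1)}$ (for sunflower-type counting arguments), this is inconsequential.
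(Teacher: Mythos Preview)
Your argument is essentially the same induction as the paper's: peel off one vertex, split on whether the remaining $(k-1)$-set already has small common neighbourhood, and in the other case use richness to bound the choices for the last vertex. The one difference is bookkeeping. You work with unordered sets and a fixed linear order, which forces you to carry a multiplicative constant through the induction (your ``factor of $2$'' is really a factor of $k$ once you iterate, since Case~A inherits $C_{k-1}$ rather than $1$). The paper sidesteps this by running the induction on \emph{ordered} $q$-tuples, proving there are at most $qn^{q-1+\delta}$ bad ones; dividing by $k!$ at the end to pass to unordered sets then gives $kn^{k-1+\delta}/k!\le n^{k-1+\delta}$ with no leftover constant. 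This is a cleaner way to close the induction exactly, though as you note the constant is harmless for the downstream application.
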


\begin{proof}
We will prove by induction that there are at most $qn^{q-1+1/5}$
``bad'' ordered $q$-tuples $\boldsymbol{v}\in V^{q}$ such that
$\left|\bigcap_{v\in\boldsymbol{v}}N\left(v\right)\right|<\varepsilon^{q}n$,
for all $1\le q\le k$. This will prove that there are at most $kn^{k-1+1/5}/k!\le n^{k-1+1/5}$
subsets $\boldsymbol{v}\in\binom{V}{k}$ such that $\left|\bigcap_{v\in\boldsymbol{v}}N\left(v\right)\right|<\varepsilon^{k}n$.

First note that the base case $q=1$ follows directly from $\left(\delta,\varepsilon\right)$-richness,
with $W=V$. Then, assume for induction that our desired bound holds
for $q-1$; we will prove it for $q$. First, there are at most $\left(q-1\right)n^{q-1+1/5}$
bad $q$-tuples obtained by appending a vertex to a bad $\left(q-1\right)$-tuple.
Then, for each $\left(q-1\right)$-tuple $\boldsymbol{v}$ which is
not bad (meaning $\left|\bigcap_{v\in\boldsymbol{v}}N\left(v\right)\right|\ge\varepsilon^{q-1}n$),
by $\left(\delta,\varepsilon\right)$-richness there are
at most $n^{1/5}$ vertices $w$ with $\left|N\left(w\right)\cap\bigcap_{v\in\boldsymbol{v}}N\left(v\right)\right|<\varepsilon\left|\bigcap_{v\in\boldsymbol{v}}N\left(v\right)\right|$,
meaning that there are at most $n^{q-1+1/5}$ bad-$q$-tuples that
can be obtained by appending a vertex to a not-bad $\left(q-1\right)$-tuple,
and at most $qn^{q-1+1/5}$ bad $q$-tuples total.
\end{proof}

\subsection{Tools from extremal (hyper)graph theory}

We will make frequent use of Tur\'an's theorem to find large independent
sets in various auxiliary graphs. The following form of the theorem
appears, for example, in~\cite{AS}.
\begin{prop}
\label{prop:turan}Every $n$-vertex graph $G$ contains an independent
set of size at least
\[
\sum_{v\in V\left(G\right)}\frac{1}{d\left(v\right)+1}\ge\frac{n^{2}}{\sum_{v\in V\left(G\right)}\left(d\left(v\right)+1\right)}=\Omega\left(\min\left\{ n,\frac{n^{2}}{e\left(G\right)}\right\} \right).
\]
\end{prop}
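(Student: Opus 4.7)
The plan is to establish the first inequality (the Caro--Wei bound) by a standard probabilistic argument, then derive the remaining inequalities by convexity and a case split.

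First I would pick a uniformly random linear order $\pi$ on $V(G)$ and let $I$ denote the set of vertices $v$ that precede all of their neighbours in $\pi$. The set $I$ is independent: if $u,v\in I$ were adjacent, then $u$ must precede $v$ (since $v$ precedes all of $N(v)$) and simultaneously $v$ must precede $u$. For each $v$, the event $v\in I$ is exactly the event that $v$ is the first element of $\{v\}\cup N(v)$ in $\pi$, which happens with probability $1/(d(v)+1)$. By linearity of expectation,
\[
\E[|I|]=\sum_{v\in V(G)}\frac{1}{d(v)+1},
\]
so there is an outcome in which $|I|$ is at least this sum, proving the first inequality.

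Next, the middle inequality follows from the Cauchy--Schwarz inequality (or equivalently convexity of $x\mapsto 1/x$ and Jensen's inequality): writing $a_v=d(v)+1$,
\[
\left(\sum_{v}\frac{1}{a_v}\right)\left(\sum_{v}a_v\right)\ge\left(\sum_{v}1\right)^{2}=n^{2},
\]
which rearranges to the claimed bound.

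Finally, for the asymptotic statement, I would use $\sum_{v}(d(v)+1)=2e(G)+n$. If $e(G)\le n$, then $\sum_{v}(d(v)+1)\le 3n$, so $n^{2}/\sum_{v}(d(v)+1)\ge n/3=\Omega(n)$. Otherwise $e(G)>n$, so $\sum_{v}(d(v)+1)\le 3e(G)$ and $n^{2}/\sum_{v}(d(v)+1)\ge n^{2}/(3e(G))=\Omega(n^{2}/e(G))$. In both cases the bound is $\Omega(\min\{n,n^{2}/e(G)\})$, as required.

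There is no real obstacle here; the only mildly delicate point is keeping track of the additive $n$ in the denominator $2e(G)+n$ in the final case split, which is handled cleanly by separating the sparse and dense regimes.
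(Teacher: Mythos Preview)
Your argument is correct in all three parts: the random-ordering proof of the Caro--Wei bound, the Cauchy--Schwarz step, and the case split on $e(G)\lessgtr n$. The paper does not give its own proof of this proposition---it simply cites~\cite{AS}---so there is nothing to compare against, but what you have written is exactly the standard textbook derivation.
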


Next, a \emph{sunflower} in a hypergraph is a subgraph in which every
pair of edges has the same intersection (this common intersection
is called the \emph{kernel}, and removing the kernel from each edge
gives the \emph{petals}). We will need the following weak form of the Erd\H os--Rado
sunflower lemma~\cite{ER61}, which one can easily prove by induction on the uniformity of a hypergraph.
\begin{lem}
\label{lem:sunflower}Fix $k\in\NN$ and let $H$ be a $k$-uniform
hypergraph with $m$ edges. Then $H$ contains an $\Omega\left(m^{1/k}\right)$-edge
sunflower.
\end{lem}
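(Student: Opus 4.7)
The plan is to induct on the uniformity $k$, with the inductive claim that every $k$-uniform hypergraph with $m$ edges contains a sunflower with at least $c_k\,m^{1/k}$ petals for some constant $c_k>0$ depending only on $k$.

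The base case $k=1$ is immediate: a $1$-uniform hypergraph is just a collection of singletons, which is already a sunflower with empty kernel and $m$ petals, so $c_1=1$ works. For the inductive step, fix $k\ge 2$ and let $\Delta$ be the maximum degree of $H$. I would split into two cases according to whether $\Delta$ is large or small, with the threshold chosen to balance the two estimates.

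If $\Delta\ge m^{(k-1)/k}$, pick a vertex $v$ of maximum degree and consider its link $H_v$, that is, the $(k-1)$-uniform hypergraph whose edges are $\{e\setminus\{v\}:v\in e\in H\}$. Then $H_v$ has at least $m^{(k-1)/k}$ edges, so by the inductive hypothesis it contains a sunflower with at least $c_{k-1}(m^{(k-1)/k})^{1/(k-1)}=c_{k-1}m^{1/k}$ petals. Re-adjoining $v$ to every edge of this sunflower (it lies in every edge of $H_v$ by construction and is absent from all the petals since the petals live in $V\setminus\{v\}$) produces a sunflower in $H$ with the same number of petals and kernel containing $v$.

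If instead $\Delta<m^{(k-1)/k}$, I will produce a large matching greedily: repeatedly choose any remaining edge and delete it together with all edges intersecting it. Each chosen edge has $k$ vertices, and each vertex is incident to fewer than $m^{(k-1)/k}$ edges, so each step destroys fewer than $k\,m^{(k-1)/k}$ edges. Hence we obtain a matching of size at least $m/(k\,m^{(k-1)/k})=m^{1/k}/k$, and a matching is precisely a sunflower with empty kernel. Combining the two cases and setting $c_k=\min(c_{k-1},1/k)$ closes the induction. The only subtlety worth checking is the bookkeeping in the first case (that the petals remain pairwise ``petals'' after re-adjoining $v$, which they do because $v$ is in the kernel); everything else is routine.
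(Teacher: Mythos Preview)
Your proof is correct and follows exactly the approach the paper indicates: induction on the uniformity $k$. The paper does not spell out the details (it merely remarks that the lemma ``one can easily prove by induction on the uniformity of a hypergraph''), but your two-case split on the maximum degree, using the link hypergraph when a high-degree vertex exists and a greedy matching otherwise, is the standard way to carry out this induction and yields the claimed $\Omega(m^{1/k})$ bound.
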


\subsection{Probabilistic tools}\label{subsec:prob-tools}

We will need concentration and anticoncentration inequalities for
random variables arising from random subsets of given sizes. Say a
random variable $X$ is of \emph{$\left(n,p,b\right)$-hypergeometric}
type if it can be expressed in the form $X=\sum_{i\in I}a_{i}$,
where $a_{1},\dots,a_{n}\in\RR$ are fixed, $\left|a_{i}\right|\le b$
for each $i$, and $I$ is a uniformly random subset of $\left\{ 1,\dots,n\right\} $
of size $pn$. The following concentration lemma follows directly
from~\cite[Corollary~2.2]{GIKM17}.
\begin{lem}
\label{lem:concentration}Suppose $X$ is of $\left(n,p,b\right)$-hypergeometric
type. Then, for any $t\in\RR$,
\[
\Pr\left(\left|X-\E X\right|\ge t\right)=\exp\left(-\Omega\left(\frac{t^{2}}{nb^{2}\min\left\{ p,1-p\right\} }\right)\right).
\]
\end{lem}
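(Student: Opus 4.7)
The plan is to reduce the inequality to a standard Chernoff/Bernstein bound for sums of independent bounded random variables. First, by considering the complementary index set $[n]\setminus I$ (which is itself a uniform $(1-p)n$-subset of $[n]$) and writing $X = \sum_{i=1}^{n} a_i - \sum_{j \notin I} a_j$, any tail bound for the $p \le 1/2$ case immediately yields the corresponding bound when $p > 1/2$, with $1-p$ in place of $p$. This is precisely how the factor $\min\{p, 1-p\}$ enters the bound, so it suffices to prove the statement under the assumption $p \le 1/2$.

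For this regime I would invoke the classical reduction of Hoeffding: for any convex $\phi$, the expectation $\E \phi(X)$ under sampling without replacement is dominated by $\E \phi(X')$ under the corresponding independent sampling, where $X' = \sum_{i=1}^{n} \xi_i a_i$ and $\xi_1,\dots,\xi_n$ are independent $\Ber(p)$ variables. Applying this with $\phi(x) = e^{\lambda(x - \E X)}$ for $\lambda > 0$ bounds the moment generating function of $X - \E X$ by that of $X' - \E X'$ (noting $\E X = \E X' = p \sum_i a_i$), reducing the task to controlling the MGF of an independent sum of bounded, mean-zero random variables.

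From there the standard Bernstein calculation applies: since each summand $\xi_i a_i$ lies in $[-b,b]$ and has variance $p(1-p)a_i^2 \le p a_i^2 \le p b^2$, one obtains $\E e^{\lambda (X' - \E X')} \le \exp\bigl(O(\lambda^2 p n b^2)\bigr)$ whenever $\lambda b$ is sufficiently small, and optimising over $\lambda$ gives a tail bound of $\exp\bigl(-\Omega(t^2/(npb^2))\bigr)$. Combined with the complementation step, this yields the claimed bound with $\min\{p,1-p\}$ in the denominator. The only mild subtlety is ensuring that the implicit constants in the exponent do not depend on $p$, which is exactly what is packaged as~\cite[Corollary~2.2]{GIKM17}; in practice the proof is therefore a direct citation of that inequality, plus the complementation trick above to symmetrise $p$ and $1-p$.
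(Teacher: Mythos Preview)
Your proposal is correct and matches the paper's treatment: the paper does not give an independent proof of this lemma but simply states that it ``follows directly from~\cite[Corollary~2.2]{GIKM17}'', which is exactly where you land after sketching the Hoeffding--Bernstein reduction. Your additional explanation (the complementation to symmetrise $p$ and $1-p$, and the reduction to independent Bernoulli sampling) is sound and goes beyond what the paper records, but the underlying citation is the same.
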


Next, say that $X$ as above (of \emph{$\left(n,p,b\right)$}-hypergeometric
type) is of \emph{$\left(n,p,b,r\right)^{*}$}-hypergeometric type if
moreover $\left|a_{i}\right|\ge1/b$ for at least $r$ indices $i$ (that is, many $a_{i}$ are
bounded away from zero as well as being bounded in size). The following
central limit theorem directly follows from a classical quantitative
central limit theorem first proved by Bikelis \cite{Bik69} (see also
\cite{Hog78}).
\begin{lem}
\label{lem:Bikelis}Fix $b>0$ and suppose $X$ is of $\left(n,p,b,n/b\right)^{*}$-hypergeometric
type, with $|\E X|\le n/(2b^2)$. Let $F$ be the distribution function of $\left(X-\E X\right)/\sqrt{\Var X}$
and let $G$ be the standard Gaussian distribution function. Then
for all $z\in\RR$,
\[
\left|F\left(z\right)-G\left(z\right)\right|=O\left(\frac{1}{\sqrt{p(1-p)n}}\right).
\]
\end{lem}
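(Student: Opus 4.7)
The plan is to apply a quantitative central limit theorem directly: Bikelis's theorem~\cite{Bik69} gives a non-uniform Berry--Esseen bound for sums of independent random variables, and H\"oglund's result~\cite{Hog78} extends this framework to simple random sampling without replacement from a finite population. Since $X = \sum_{i\in I} a_i$ where $I$ is a uniform random $pn$-subset of $\left\{ 1,\dots,n\right\} $, we are precisely in the sampling-without-replacement setting. Such a theorem yields a uniform bound of the form $O\left(L/\sigma^{3}\right)$ on $|F(z) - G(z)|$, where $\sigma^{2} = \Var X$ and $L$ is a suitable third-moment quantity. It therefore suffices to check that both are of the right size.

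For the variance, I would use the standard hypergeometric identity
\[
\Var X = p(1-p)\,\frac{n}{n-1}\,\sum_{i=1}^{n}\left(a_{i}-\bar a\right)^{2}, \qquad \bar a = \frac{1}{n}\sum_{i}a_{i} = \frac{\E X}{pn}.
\]
From the $\left(n,p,b,n/b\right)^{*}$-hypergeometric assumption, at least $n/b$ of the $|a_{i}|$ are at least $1/b$, so $\sum_{i}a_{i}^{2}\ge n/b^{3}$. The hypothesis $|\E X|\le n/(2b^{2})$ bounds $|\bar a|$; after reducing to the case $p\le 1/2$ via the symmetry $X\leftrightarrow\sum_{i}a_{i}-X$ (replacing $I$ by its complement, which swaps the roles of $p$ and $1-p$), a short calculation then gives $\sum_{i}\left(a_{i}-\bar a\right)^{2}=\Omega(n)$, and hence $\sigma^{2}=\Omega\left(p(1-p)n\right)$. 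For the third moment, the bound $|a_{i}-\bar a|=O(1)$ (with constant depending only on $b$) implies $L=O\left(\sigma^{2}\right)$, so the Bikelis/H\"oglund estimate $O\left(L/\sigma^{3}\right)$ collapses to $O\left(1/\sigma\right) = O\left(1/\sqrt{p(1-p)n}\right)$, as required.

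The main obstacle, aside from careful bookkeeping of the constants, is locating a version of the Berry--Esseen theorem that applies directly to simple random sampling without replacement with the correct dependence on $p(1-p)$ in the remainder term. Bikelis's and H\"oglund's work provide exactly such a statement, so modulo the variance lower bound verified above, the result reduces to a direct appeal to a classical quantitative CLT.
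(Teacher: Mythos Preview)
Your approach is precisely the paper's: the paper gives no proof beyond the sentence ``directly follows from a classical quantitative central limit theorem first proved by Bikelis~\cite{Bik69} (see also~\cite{Hog78}).'' Fleshing this out by checking that $\sigma^{2}=\Omega(p(1-p)n)$ and that the third-moment quantity is $O(\sigma^{2})$ is exactly what one should do.

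That said, your variance verification has a gap. The symmetry $I\leftrightarrow I^{c}$ swaps $p$ with $1-p$ but leaves both $\bar a$ and $\sum_{i}(a_{i}-\bar a)^{2}$ unchanged, so reducing to $p\le 1/2$ cannot help bound $|\bar a|$; the only handle on $|\bar a|$ is $|\E X|=pn|\bar a|\le n/(2b^{2})$, which gives $|\bar a|\le 1/(2b^{2}p)$ and is useless for small $p$. In fact the hypotheses as written do \emph{not} force $\sum_{i}(a_{i}-\bar a)^{2}=\Omega(n)$: with $b=2$, all $a_{i}=1$, and $p=1/8$, every condition is met (each $|a_{i}|\ge 1/b$, and $|\E X|=n/8=n/(2b^{2})$), yet $\sum_{i}(a_{i}-\bar a)^{2}=0$. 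Perturbing one coordinate slightly gives a non-degenerate example in which the standardised $X$ is a two-point distribution, so no $o(1)$ Berry--Esseen bound can hold. This is best viewed as a slight imprecision in the lemma's statement rather than a flaw in your strategy: in every place the paper invokes the result (through \cref{lem:anticoncentration}), the weights $a_{i}$ arise from symmetric differences of neighbourhoods and take both signs $\pm 1$ on a set of size $\Omega(n)$, which forces $\sum_{i}(a_{i}-\bar a)^{2}=\Omega(n)$ unconditionally. Your third-moment bound $|a_{i}-\bar a|\le 2b=O(1)$ is fine, since $|\bar a|\le b$ always.
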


We only need \cref{lem:Bikelis} for anticoncentration,
so we state a simple corollary for later use.
\begin{lem}
\label{lem:anticoncentration}Suppose $X$ is of $\left(n,p,O(1),\Omega(n)\right)^{*}$-hypergeometric type.
Then, for any $-\sqrt n<x<\sqrt n$,
\[
\Pr\left(X=x\right)=O\left(\frac{1}{\sqrt{p(1-p)n}}\right).
\]
\end{lem}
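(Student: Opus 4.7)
The plan is to deduce \cref{lem:anticoncentration} from the Bikelis CLT (\cref{lem:Bikelis}) via a standard CDF argument, using the concentration bound (\cref{lem:concentration}) to cover boundary regimes where Bikelis is not directly applicable. Let $b=O(1)$ witness the $(n,p,O(1),\Omega(n))^{*}$-hypergeometric type of $X$, and write $\bar a=n^{-1}\sum_{i}a_{i}$.

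I proceed by cases. If $p(1-p)n=O(1)$, the asserted bound is $\Omega(1)$ and is trivial; assume henceforth $p(1-p)n\to\infty$. If $|\E X|>n/(2b^{2})$, then the event $\{X=x\}$ with $|x|<\sqrt n$ forces $|X-\E X|=\Omega(n)$, so \cref{lem:concentration} gives $\Pr(X=x)\le\exp(-\Omega(n/\min(p,1-p)))$, which easily beats $1/\sqrt{p(1-p)n}$. Assume henceforth $|\E X|\le n/(2b^{2})$, placing us in the hypothesis of \cref{lem:Bikelis}.

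Before applying Bikelis, I first secure a variance lower bound $\Var X=\Omega(p(1-p)n)$. From the identity
\[
\Var X=\frac{p(1-p)n}{n-1}\sum_{i}(a_{i}-\bar a)^{2},
\]
it suffices to show $\sum_{i}(a_{i}-\bar a)^{2}=\Omega(n)$. If $|\bar a|\le 1/(2b)$, this is immediate: each of the $\Omega(n)$ indices with $|a_{i}|\ge 1/b$ contributes at least $(1/(2b))^{2}$ to the sum. If instead $|\bar a|>1/(2b)$, then $|\E X|=pn|\bar a|\ge pn/(2b)$, which combined with $|\E X|\le n/(2b^{2})$ forces $p\le 1/b$; a short case analysis then shows that either $|\E X|$ is large enough for \cref{lem:concentration} to kill the probability as in the earlier case, or enough of the $\Omega(n)$ spiked indices are bounded away from $\bar a$ that $\sum_{i}(a_{i}-\bar a)^{2}=\Omega(n)$.

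Finally, with $\Var X=\Omega(p(1-p)n)$ in hand, let $F$ be the CDF of $Z=(X-\E X)/\sqrt{\Var X}$, let $G$ be the standard Gaussian CDF, and set $z=(x-\E X)/\sqrt{\Var X}$. Then
\[
\Pr(X=x)=F(z)-F(z^{-})\le\lim_{\delta\to 0^{+}}\bigl(|F(z)-G(z)|+(G(z)-G(z-\delta))+|G(z-\delta)-F(z-\delta)|\bigr);
\]
the two endpoint terms are each $O(1/\sqrt{p(1-p)n})$ by \cref{lem:Bikelis}, and the middle term vanishes by continuity of $G$. The main technical content is the variance lower bound in the subcase $|\bar a|>1/(2b)$; the rest is routine CDF manipulation.
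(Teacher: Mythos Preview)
Your approach is essentially the paper's: split on the size of $|\E X|$, handling large $|\E X|$ via the concentration bound (\cref{lem:concentration}) and small $|\E X|$ via Bikelis's CLT (\cref{lem:Bikelis}) together with the standard CDF sandwich. The paper uses the threshold $|\E X|\le n^{2/3}$ rather than your $|\E X|\le n/(2b^{2})$ and leaves the CDF manipulation implicit, but otherwise the arguments coincide.

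The one substantive difference is your variance-lower-bound paragraph, and it is unnecessary. Once \cref{lem:Bikelis} is granted as the paper states it, the bound $|F(z)-G(z)|=O(1/\sqrt{p(1-p)n})$ holds for every $z$, and your CDF sandwich gives $\Pr(X=x)\le 2\sup_{z}|F(z)-G(z)|$ with no quantitative lower bound on $\Var X$ required (only $\Var X>0$ so that $F$ is well-defined). Moreover, your hand-waved subcase ``$|\bar a|>1/(2b)$'' cannot be completed as written: if every $a_{i}$ equals a common value $c$ with $1/b\le|c|\le b$, then all indices are ``spiked'' yet $\sum_{i}(a_{i}-\bar a)^{2}=0$, and for small $p$ one also has $|\E X|=pn|c|$ far below the threshold needed for the concentration escape, so neither branch of your dichotomy fires. (In that degenerate situation $X$ is constant and \cref{lem:Bikelis} is inapplicable anyway, so nothing is truly at stake; but the ``short case analysis'' you promise does not exist.) Simply delete the variance paragraph and your proof is complete and matches the paper's.
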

\begin{proof}
If say $|\E X|\le n^{2/3}$ then the desired result follows from \cref{lem:Bikelis}. Otherwise, the desired result follows from \cref{lem:concentration}, since with probability $1-e^{-\Omega(n^{1/3})}$, $X$ does not even fall in the interval between $-\sqrt n$ and $\sqrt n$.
\end{proof}

We also make the following simple observation, which will be convenient
to show that various discrepancy properties we are able to establish
will persist with positive probability through certain kinds of random
sampling. If a random variable is of $\left(n,1/2,b\right)$-hypergeometric
type for some $n$ and $b$, say it is of $\left(1/2\right)$-hypergeometric
type.
\begin{lem}
\label{lem:symmetric}Suppose $X$ is of $\left(1/2\right)$-hypergeometric
type. Then, $X-\E X$ has the same distribution as $\E X-X$, and
in particular, $X\ge\E X$ with probability at least $1/2$.
\end{lem}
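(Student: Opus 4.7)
The plan is to exploit the symmetry $I \mapsto \{1,\dots,n\}\setminus I$, which leaves the distribution of a uniformly random $(n/2)$-subset invariant.

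Concretely, I would write $X = \sum_{i \in I} a_i$ where $I \subseteq \{1,\dots,n\}$ is uniformly random of size $n/2$, and define the complementary random variable $X' = \sum_{i \notin I} a_i$. Since the map $I \mapsto \{1,\dots,n\}\setminus I$ is a bijection on $(n/2)$-subsets of $\{1,\dots,n\}$, the set $\{1,\dots,n\}\setminus I$ is also uniformly distributed among $(n/2)$-subsets, and hence $X'$ has the same distribution as $X$. On the other hand, $X + X' = \sum_{i=1}^n a_i$ holds deterministically, and taking expectations gives $\sum_{i=1}^n a_i = 2\E X$. Thus $X' = 2\E X - X$ pointwise, and combining with $X' \stackrel{d}{=} X$ yields the distributional identity $X - \E X \stackrel{d}{=} \E X - X$.

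For the final claim, this symmetry gives $\Pr(X \ge \E X) = \Pr(\E X - X \ge 0) = \Pr(X \le \E X)$, and since these two events together cover the whole probability space (they overlap on $\{X = \E X\}$), each must have probability at least $1/2$.

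There is no real obstacle here: the argument is a direct complementation trick made possible by the fact that $p = 1/2$, which is the unique value at which the complement of a uniform random subset has the same size distribution as the subset itself.
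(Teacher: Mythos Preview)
Your proof is correct and essentially identical to the paper's own argument: both define $X' = \sum_{i \notin I} a_i$, use that $I \mapsto \overline{I}$ preserves the uniform distribution on $(n/2)$-subsets so that $X' \stackrel{d}{=} X$, and observe that $X + X'$ is the deterministic constant $\sum_i a_i = 2\E X$. Your added line deriving $\Pr(X \ge \E X) \ge 1/2$ from the symmetry is a small clarification the paper leaves implicit.
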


\begin{proof}
Suppose that $X=\sum_{i\in I}a_{i}$, and let $X=\sum_{i\notin I}a_{i}$.
Since $I$ is a random subset of exactly half the indices $\left\{ 1,\dots,n\right\} $,
it has the same distribution as its complement $\overline{I}$, so
$X$ has the same distribution as $X'$. But observe that 
\[
\left(X+X'\right)/2=\sum_{i=1}^{n}a_{i}/2=\E X=\E X',
\]
so $\E X-X=X'-\E X'$.
\end{proof}
We remark that \cref{lem:concentration} (respectively \cref{lem:symmetric}) trivially remains true when the relevant random variables are translated by a fixed constant. We will therefore frequently abuse notation and say that translations of random variables of $(n,p,b)$-hypergeometric type (respectively $(1/2)$-hypergeometric type) are themselves  of $(n,p,b)$-hypergeometric type (respectively $(1/2)$-hypergeometric type).

Throughout the proof we will also frequently use Markov's inequality;
the statement and proof can be found, for example, in~\cite{AS}.

\subsection{Switching analysis}

In this subsection we collect some simple lemmas that will be useful
for tracking how certain parameters change as we gradually switch
from one vertex subset to another. First, we show that if we move
between two distant values, and most of the incremental steps are
not too extreme, then there are many intermediate steps with ``well-separated''
values.
\begin{lem}
\label{lem:well-separated}Consider a sequence $p_{0},\dots,p_{\tau}$
with $p_{\tau}-p_{0}\ge\lambda$. Let $\Delta_{i}=p_{i}-p_{i-1}$
and suppose that for some $\rho$ we have
\[
\sum_{i:\Delta_{i}>\rho}\Delta_{i}\le\kappa.
\]
Then, for any $\sigma\le\rho$ there is an increasing subsequence
$0=i_{1},\dots,i_{s}=\tau$, with $s\ge\lambda/\left(\rho+\sigma\right)-\kappa/\rho$,
such that $p_{i_{j}}-p_{i_{j-1}}\ge\sigma$ for all $1\le j\le s$.
\end{lem}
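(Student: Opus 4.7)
The plan is a greedy construction followed by a careful endpoint adjustment. First I would set $i_1 = 0$ and, for each $j \ge 1$, let $i_{j+1}$ be the smallest index strictly greater than $i_j$ with $p_{i_{j+1}} - p_{i_j} \ge \sigma$, stopping once no such index exists. This yields a greedy sequence $0 = i_1 < i_2 < \ldots < i_{s'}$ in which every consecutive jump is at least $\sigma$ by construction.

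The second step is a counting estimate. Minimality of $i_{j+1}$ forces $p_{i_{j+1}-1} - p_{i_j} < \sigma$, hence $p_{i_{j+1}} - p_{i_j} < \sigma + \Delta_{i_{j+1}}$. Summing over $j$ and splitting $\sum \Delta_{i_j}$ according to whether each term is ``small'' ($\le \rho$, contributing at most $\rho$ each) or ``large'' ($> \rho$, contributing at most $\kappa$ in total by hypothesis), one obtains
\[
p_{i_{s'}} - p_0 \;<\; (s'-1)(\sigma + \rho) + \kappa.
\]
If greedy reaches $i_{s'} = \tau$, combining with $p_\tau - p_0 \ge \lambda$ already gives $s' \ge \lambda/(\sigma+\rho) + 1 - \kappa/\rho$ and we are done. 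Otherwise greedy stops short with $p_\tau - p_{i_{s'}} < \sigma$. In that case I would trim: let $k$ be the largest index for which $p_{i_k} \le p_\tau - \sigma$ (which exists whenever $\lambda \ge \sigma$, since $p_{i_1} = p_0$), and output the subsequence $i_1, \ldots, i_k, \tau$, whose consecutive jumps are automatically at least $\sigma$ (the first $k-1$ by greedy, and $p_\tau - p_{i_k} \ge \sigma$ by the choice of $k$).

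The main obstacle is to bound $k + 1$ from below when the ``failed'' step $\Delta_{i_{k+1}}$ happens to be itself a large jump, since a crude application of the counting estimate at index $k$ seems to lose an extra $\Delta_{i_{k+1}}/(\sigma+\rho)$. The resolution is that $\Delta_{i_{k+1}}$ sits inside the same budget $\kappa$, so applying the greedy estimate only up to $i_k$ one may sharpen it to $p_{i_k} - p_0 < (k-1)(\sigma + \rho) + (\kappa - \Delta_{i_{k+1}})$. Combined with the lower bound $p_{i_k} - p_0 > \lambda - 2\sigma - \Delta_{i_{k+1}}$ (which follows from $p_{i_{k+1}} > p_\tau - \sigma$ by maximality of $k$, together with $p_{i_{k+1}} - p_{i_k} < \sigma + \Delta_{i_{k+1}}$ from the minimality of $i_{k+1}$), the $\Delta_{i_{k+1}}$ terms cancel. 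Rearranging, and using $\sigma \le \rho$ so that $2\sigma/(\sigma+\rho) \le 2$ and $\kappa/(\sigma+\rho) \le \kappa/\rho$, yields $k + 1 \ge \lambda/(\sigma + \rho) - \kappa/\rho$, which is exactly the required bound on $s$.
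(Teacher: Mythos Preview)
Your argument is correct and takes a genuinely different route from the paper's. The paper proceeds geometrically: it partitions the value interval $[p_0,p_\tau]$ into $\lambda/(\rho+\sigma)$ sub-intervals of length $\rho$ separated by gaps of width at least $\sigma$, and then walks forward in time, recording the first moment the particle lands in a sub-interval strictly further than the current one. Since skipping over a sub-interval of length $\rho$ forces a single step with $\Delta_i>\rho$, and such large steps can account for at most $\kappa/\rho$ skipped sub-intervals in total, the bound $s\ge \lambda/(\rho+\sigma)-\kappa/\rho$ drops out with essentially no case analysis and no endpoint repair.

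Your approach is instead purely index-based: greedily take the next time the value has risen by $\sigma$, bound each resulting increment via the small/large dichotomy on $\Delta_{i_{j+1}}$, and then trim the tail to restore the endpoint $i_s=\tau$. This works, but it costs you the extra endpoint surgery and the separate handling of whether $\Delta_{i_{k+1}}$ is large (so that it lies in the $\kappa$-budget and cancels against the lower bound on $p_{i_k}$) or small (so that $\Delta_{i_{k+1}}/(\sigma+\rho)\le 1$ is absorbed into the ``$+2$'' coming from $k+1$). The paper's interval picture sidesteps both subtleties, because the $\sigma$-gaps between sub-intervals automatically enforce the separation, and ``skipping a sub-interval'' cleanly charges large jumps without singling out any particular $\Delta_{i_{k+1}}$. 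On the other hand, your greedy argument is entirely self-contained and does not require guessing the right geometric decomposition in advance. One minor remark: the inequality $2\sigma/(\sigma+\rho)\le 2$ you invoke is trivially true and does not actually use $\sigma\le\rho$; that hypothesis would give the sharper $2\sigma/(\sigma+\rho)\le 1$, though you do not need it.
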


\begin{proof}
We view $p_{i}$ as the position of a ``particle'' at ``time''
$i$. In the interval from $p_{0}$ to $p_{\tau}$ , consider $\lambda/\left(\rho+\sigma\right)$
sub-intervals of length $\rho$ separated by a distance of at least
$\sigma$, with the first sub-interval containing $p_{0}$ and the
last containing $p_{\tau}$. We say a sub-interval $I$ is ``further''
than a sub-interval $I'$ if $I$ is closer to $p_{\tau}$ than $I'$.

Let $i_{1}=0$ and let $I_{1}$ be the sub-interval containing $p_{0}$.
For $j>1$ let $i_{j}>i_{j-1}$ be the first time $i$ that $p_{i}$
is in a sub-interval further than $I_{j-1}$ and let $I_{j}$ be this
sub-interval. This process terminates when there is no sub-interval
further than $I_{j}$ (let $s=j$ for this value of $j$, and redefine
$i_{s}=\tau$). Observe that at most $\kappa/\rho$ intervals were
skipped, so $s\ge\lambda/\left(\rho+\sigma\right)-\kappa/\rho$.
\end{proof}
Next, the following lemma shows that if an ensemble of values move
slowly in a bounded region, then at least one value ``follows the
crowd'' for quite a long time.
\begin{lem}
\label{lem:lonely}Consider an interval $I\subseteq\ZZ$ with $\left|I\right|=\lambda$,
and consider a ``time horizon'' $\tau\in\NN$. Consider a set of
``particles'' $R$, and for each $a\in R$ let $p_{i}\left(a\right)\in I$
represent the ``position'' of $a$ at time $i$, in such a way that
$\left|p_{i}\left(a\right)-p_{i-1}\left(a\right)\right|\le\rho$ for
each $0<i\le\tau$ (that is, the particles move with ``speed'' at
most $\rho$). For $\sigma,\mu>0$, say a particle $a$ is \emph{lonely}
at time $0\le i\le\tau$ if 
\[
\left|\left\{ b\in R:\left|p_{i}\left(b\right)-p_{i}\left(a\right)\right|\le\sigma\right\} \right|<\mu.
\]
(That is, a particle is lonely if there are few other particles close
to it). Now, if $\tau\le\left|R\right|\sigma^{2}/\left(8\mu\rho\lambda\right)$
then there is a particle $a$ which is never lonely.
\end{lem}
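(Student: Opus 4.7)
The plan is to prove the contrapositive: assume every particle in $R$ is lonely at some time, and derive a lower bound on $\tau$ that contradicts the hypothesis. For each $a\in R$ fix a lonely time $t(a)\in\{0,\dots,\tau\}$ and set $\delta=\sigma/(4\rho)$. The argument has three steps: first, extend each $t(a)$ to a time window $J(a)$ on which $a$ sees few particles within distance $\sigma/2$; second, by averaging, find a single time $i^*$ at which many of the windows $J(a)$ are active; third, use spatial bucketing of $I$ to bound the number of such active particles.

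For the first step, the speed bound $|p_s(b)-p_{t(a)}(b)|\le\rho|s-t(a)|$ implies that any particle $b$ lying within distance $\sigma/2$ of $a$ at a time $s$ with $|s-t(a)|\le\delta$ was within distance $\sigma/2+2\rho\delta=\sigma$ of $a$ at time $t(a)$. Hence, for every $s$ in the window $J(a):=\{0,\dots,\tau\}\cap[t(a)-\delta,t(a)+\delta]$, strictly fewer than $\mu$ particles of $R$ lie within distance $\sigma/2$ of $a$ at time $s$. Each $|J(a)|$ is at least $\delta$ (and essentially $2\delta$ when $t(a)$ is at distance at least $\delta$ from $\{0,\tau\}$), so $\sum_a|J(a)|\ge|R|\delta$; averaging over the $\tau+1$ available times produces an $i^*$ with $|A|\ge|R|\delta/(\tau+1)$, where $A:=\{a\in R:i^*\in J(a)\}$.

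For the third step, partition $I$ into consecutive blocks of length at most $\sigma/2$; there are at most $\lceil 2\lambda/\sigma\rceil$ such blocks. If $a\in A$ lies in a block $B$ at time $i^*$, then every particle of $R\cap B$ is within distance $\sigma/2$ of $a$, so $|R\cap B|<\mu$. Summing over blocks gives $|A|<\mu\lceil 2\lambda/\sigma\rceil$, and combining with the lower bound on $|A|$ yields $\tau+1>|R|\sigma^2/(8\mu\rho\lambda)$ after absorbing the ceiling and boundary losses into the factor $8$ from the hypothesis, contradicting $\tau\le|R|\sigma^2/(8\mu\rho\lambda)$.

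The main potential obstacle is purely constant-chasing: the windows $J(a)$ may be truncated near the boundary of $\{0,\dots,\tau\}$, and the partition of $I$ uses $\lceil 2\lambda/\sigma\rceil$ blocks rather than exactly $2\lambda/\sigma$. Both issues are routine, and the factor $8$ in the hypothesis leaves ample slack; no new ideas beyond the three steps above are needed.
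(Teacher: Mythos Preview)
Your proof is correct and takes essentially the same approach as the paper: both hinge on the time scale $\sigma/(4\rho)$ and the spatial bucketing of $I$ into intervals of length $\sigma/2$. The paper frames the double-counting directly---checking ``crowdedness'' at a grid of about $4\rho\tau/\sigma$ times and union-bounding the particles that are ever not crowded---rather than via contradiction and averaging as you do, but the arithmetic is identical, and the paper is equally loose with the ceiling and boundary issues you flag (these are irrelevant in the application, \cref{claim:lonely}, where only the order of magnitude matters).
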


\begin{proof}
Say a particle $a$ is \emph{crowded }if 
\[
\left|\left\{ b\in R:\left|p_{i}\left(b\right)-p_{i}\left(a\right)\right|\le\sigma/2\right\} \right|\ge\mu.
\]
At any time $i$, fewer than $2\mu\lambda/\sigma$ particles are not
crowded. To see this, divide $I$ into $2\lambda/\sigma$ sub-intervals
of length $\sigma/2$. If a sub-interval contains at least $\mu$
particles then all particles in that sub-interval are crowded.

Now, if a particle is crowded for every time $j\sigma/\left(4\rho\right)$
(among $j\in\NN$ with $j\le4\rho\tau/\sigma$), then it is never
lonely. To see this, observe that it takes at least $\sigma/\left(4\rho\right)$
time steps for a crowded particle to become lonely. This is because
the separation between that particle and the particles within distance
$\sigma/2$ must increase by $\sigma/2$, and if two particles are
moving away from each other their separation increases by at most
$2\rho$ per time step. By this fact and the preceding paragraph,
there are fewer than $\left(4\rho\tau/\sigma\right)\left(2\mu\lambda/\sigma\right)$
particles that are ever lonely, and if $8\rho\tau\mu\lambda/\sigma^{2}\le\left|R\right|$
then there is a particle that is never lonely.
\end{proof}

\section{Proof of \texorpdfstring{\cref{conj:EFS}}{Theorem~\ref{conj:EFS}}}\label{sec:EFS-proof}

As in \cref{sec:discussion}, define 
\[
\Psi\left(\ell,G\right)=\left\{ e\left(H\right):H\text{ is an }\ell\text{-vertex induced subgraph of }G\right\} .
\]
As discussed in \cref{sec:discussion}, in the previous bounds on $\left|\Psi\left(G\right)\right|$ in
\cite{BS07,AK09,ABKS09}, the approach was to show that $\Psi\left(\ell,G\right)$
is large for each of $\Omega\left(n\right)$ specific choices of $\ell$.
In this paper it will be convenient to have slightly more flexibility:
we show that $\Psi\left(\ell',G\right)$ is large for $\Omega\left(n\right)$
different choices of $\ell'$, but we do not specify precisely which
choices they are. We will prove the following lemma, which suffices
to prove \cref{conj:EFS}.
\begin{lem}
\label{lem:per-l}For any fixed $C$, there is $c>0$ such that the
following holds. For any $n$-vertex $C$-Ramsey graph $G$, there
are $f,h\in\NN$ such that for any $cn\le\ell\le2cn$, either $\left|\Psi\left((\ell-f)+h,G\right)\right|=\Omega\left(n^{3/2}\right)$
or $\left|\Psi\left(2(\ell-f)+h,G\right)\right|=\Omega\left(n^{3/2}\right)$.
\end{lem}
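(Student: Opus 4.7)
The plan is to follow the strategy sketched in \cref{sec:discussion}. First, I would apply \cref{lem:rich} to reduce to the case where $G$ is $(\delta,\varepsilon)$-rich on $\Theta(n)$ vertices, and then use a standard discrepancy theorem (of Erd\H{o}s--Goldberg--Pach--Spencer type) to produce linear-size vertex sets $W^-, W^+, U^0$ with $|W^-|=|W^+|$, together with a parameter $\alpha \in (0,1)$, satisfying
\[
\bigl(e(W^+) + \alpha\, e(W^+, U^0)\bigr) - \bigl(e(W^-) + \alpha\, e(W^-, U^0)\bigr) = \Omega(n^{3/2}).
\]
For a large constant $K$, I would group all $\mathbf{v} \in \binom{V}{K}$ (minus the $O(n^{K-1+\delta})$ bad subsets from \cref{lem:few-bad}) by the triple $\bigl(d_{W^-}(\mathbf{v}), d_{W^+}(\mathbf{v}), d_{U^0}(\mathbf{v})\bigr)$; pigeonhole would yield a class of size $\Omega(n^{K-3})$, and \cref{lem:sunflower} would extract from it a hypergraph matching $M\subseteq \binom{V}{k}$ (with $k \le K$) of size $n^{\Omega(1)}$, whose edges share common degrees into $W^-, W^+$ and $U^0$.

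Next, I would produce a uniformly random switching sequence $W_0 = W^-, W_1, \dots, W_\tau = W^+$ by, at each step, removing one vertex of $W^- \setminus W^+$ and adding one vertex of $W^+ \setminus W^-$ according to uniformly random orderings (so each $W_i$ has size $|W^-|$). By \cref{lem:concentration}, for each $\mathbf{v} \in M$ the degree $d_{W_i}(\mathbf{v})$ is tightly concentrated around a deterministic affine function of $i$ that is the same for every $\mathbf{v} \in M$. Applying \cref{lem:well-separated} to the augmented sequence $e(W_i)+\alpha e(W_i,U^0)$ would yield a subsequence $W_{i_1},\dots,W_{i_s}$ with $s=\Omega(\sqrt n)$ and consecutive gaps $\Omega(n)$. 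For each $j$, I would then extract by pigeonhole a sub-matching $M_j'\subseteq M$ of size $\Omega(\sqrt n)$ whose edges have nearly identical values of $d_{W_{i_j}}(\mathbf{v}) + \alpha\, d_{U^0}(\mathbf{v})$. To prevent the pigeonhole intervals from drifting chaotically across different $j$ (which could cause the augmented values for distinct $j$ to overlap), I would appeal to \cref{lem:lonely}, viewing each $\mathbf{v} \in M$ as a slowly-moving particle with position $d_{W_i}(\mathbf{v}) + \alpha\, d_{U^0}(\mathbf{v})$ and restricting each $M_j'$ to lie near a single particle that is never lonely.

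Finally, I would expose a random $U \subseteq U^0$ of size $\alpha|U^0|$. By \cref{lem:symmetric}, the deterministic discrepancy properties established above would survive with positive probability under this extra randomness. For each $j$ and a fixed $r$, the random variable $e\bigl(W_{i_j} \cup U \cup \bigcup_{\mathbf{v}\in Z}\mathbf{v}\bigr)$ for $Z \in \binom{M_j'}{r}$ is essentially of hypergeometric type; \cref{lem:diversity-tuples} ensures that its coefficients are sufficiently spread out, so \cref{lem:anticoncentration} would produce $\Omega(n)$ distinct values of this edge count within an interval of length $O(n)$ centred at a known mean. Since those means would differ by $\Omega(n)$ across different $j$, the union over $j$ gives $\Omega(\sqrt n)\cdot\Omega(n)=\Omega(n^{3/2})$ distinct edge counts at the common vertex size $|W_{i_j}|+|U|+kr$. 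The two alternatives $(\ell-f)+h$ and $2(\ell-f)+h$ in the statement arise because $\ell$ enters the construction through $|U|$, and there are two natural parameter regimes available (corresponding to different scales of $|U^0|$, with $\alpha$ either $1/2$ or adjusted accordingly), so a given $\ell$ falls under at least one of them.

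The hard part will be reconciling the $O(1/\sqrt n)$ anticoncentration probability with the $\Omega(n)$ switching steps: a direct union bound fails. My approach is to make every structural choice ($M$, the subsequence $(i_j)$, and each $M_j'$) deterministically \emph{before} $U$ is exposed --- this is precisely what forces the use of the augmented discrepancy involving $U^0$ in place of $U$, together with \cref{lem:symmetric} as a symmetry-based substitute for a union bound when transferring properties from $U^0$ to $U$. Secondary obstacles, namely the uneven spacing of the $i_j$ and maintaining consistency between the different $M_j'$, will be absorbed by \cref{lem:well-separated} and \cref{lem:lonely} respectively.
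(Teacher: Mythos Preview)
Your high-level plan coincides with the paper's (which packages the content of your first two paragraphs into \cref{lem:step-2} and the per-$j$ augmentation step into \cref{lem:sub-random}), but the step where you ``transfer properties from $U^0$ to $U$'' via \cref{lem:symmetric} is a genuine gap. That lemma certifies a \emph{single} inequality with probability at least $1/2$; it cannot simultaneously preserve $\Omega(\sqrt n)$ per-$j$ separations, nor ensure that the anticoncentration step succeeds for many $j$ at once. The paper's remedy has two ingredients. For the separations, write $g_i$ for the one-step deviation of $e(U,W_i)-e(U,W_{i-1})$ from its mean $\alpha\bigl(e(U^0,W_i)-e(U^0,W_{i-1})\bigr)$, and bound $\sum_i |g_i|=O\bigl(n\sqrt{\n D}\bigr)$ by a single application of Markov, where $\n D = |U^0\setminus U| = (1-\alpha)|U^0|$. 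It is essential here that $\n D$ be taken to be $c'n$ for a \emph{small} auxiliary constant $c'$ (so $\alpha$ is close to $1$, not of order $1/2$); this makes the total drift $O(\sqrt{c'}\,n^{3/2})$, and a \emph{second} application of \cref{lem:well-separated}, now to the sequence $\Sigma_j = n(j-1)-\sum_{i\le i_j}|g_i|$, recovers a sub-subsequence on which the post-$U$ separations survive. For the anticoncentration, \cref{lem:sub-random} gives the $\Omega(\n D)$-values property for each fixed $j$ only with probability at least $1/4$; Markov on the number of bad $j$'s then yields a positive fraction of good $j$'s simultaneously. Neither of these is a symmetry argument.

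There is also a smaller gap in the per-$j$ step itself: a direct application of \cref{lem:anticoncentration} yields only $\Omega(\sqrt{\n D})$ distinct degrees, not $\Omega(\n D)$ distinct edge-counts. The paper's \cref{lem:sub-random} achieves the latter via a two-stage exposure: first remove a random $D^1\subseteq U^0$ of size $2\n D$ and use that randomness to build an \emph{inner} switching sequence $S_0,\dots,S_{\n Z-1}\subseteq M_j'$ whose expected edge-counts are separated by $\Theta(\sqrt{\n D})$; only then remove a random half $D\subseteq D^1$ and apply anticoncentration on this second stage to find $\Omega(\sqrt{\n D})$ distinct augmenting degrees near each $S_i$. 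The two contributions then combine multiplicatively to give $\Omega(\n Z\sqrt{\n D})=\Omega(\n D)$.
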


\begin{proof}[Proof of \cref{conj:EFS} given \cref{lem:per-l}]
We have
\[
\left|\Psi\left(G\right)\right|\ge\frac{1}{2}\sum_{\ell=cn}^{2cn}\left(\left|\Psi\left((\ell-f)+h,G\right)\right|+\left|\Psi\left(2(\ell-f)+h,G\right)\right|\right)=\Omega\left(n^{5/2}\right).\qedhere
\]
\end{proof}
The first ingredient for the proof of \cref{lem:per-l} will be the
following lemma asserting the existence of a collection of vertex
sets with certain discrepancy, regularity and diversity properties.
\begin{lem}
\label{lem:step-2}For any fixed $C$, there are $K\in\NN$ and
$c>0$ such that the following holds. For any $n$-vertex $C$-Ramsey
graph $G$, any $\alpha=\alpha(n)\ge 1/2$ and any $cn\le\ell\le2cn$, there are disjoint vertex sets
$W^{-},W^{+},U^{0},A$, and a $k$-uniform hypergraph perfect matching
$M\subseteq\binom{A}{k}$ of $A$ for some $k\le K$, satisfying the
following properties.
\begin{enumerate}
\item $\left|W^{-}\right|=\left|W^{+}\right|=cn$, $\left|A\right|=\Omega\left(n^{3/4}\right)$,
and either $\left|U^{0}\right|=\ell$ or $\left|U^{0}\right|=2\ell$;
\item $\left(e\left(W^{+}\right)+\alpha e\left(U^{0},W^{+}\right)\right)-\left(e\left(W^{-}\right)+\alpha e\left(U^{0},W^{-}\right)\right)=\Omega\left(n^{3/2}\right)$;
\item there are $d_{W^{-}},d_{W^{+}},d_{U^{0}}\in\NN$ such that $d_{W^{-}}\left(\boldsymbol{v}\right)=d_{W^{-}}$,
$d_{W^{+}}\left(\boldsymbol{v}\right)=d_{W^{+}}$ and $d_{U^{0}}\left(\boldsymbol{v}\right)=d_{U^{0}}$
for all $\boldsymbol{v}\in M$;
\item for each $\left\{ \boldsymbol{x},\boldsymbol{y}\right\} \in\binom{M}{2}$
we have $\left|N_{U^{0}}\left(\boldsymbol{x}\right)\triangle N_{U^{0}}\left(\boldsymbol{y}\right)\right|=\Omega\left(n\right)$.
\end{enumerate}
(Here, the implied constants in all asymptotic notation depend on $C$ but not $\alpha$).
\end{lem}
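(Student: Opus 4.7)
The strategy is to combine richness, a weighted discrepancy bound, pigeonhole, the sunflower lemma, and the diversity lemma. First, I would apply \cref{lem:rich} with a small parameter $\delta>0$ (and $C$ the Ramsey constant) to obtain a $(\delta,\varepsilon)$-rich induced subgraph $G[V^{0}]$ with $|V^{0}|=n^{0}=\Omega(n)$; by the Erd\H os--Szemer\'edi theorem this inherits density bounded away from $0$ and $1$. Next, I would choose $U^{0}\subseteq V^{0}$ of size $\ell$ or $2\ell$ (by a random choice, so that $G[V^{0}\setminus U^{0}]$ inherits density and richness and, by \cref{lem:concentration} combined with a union bound, every $K$-set $\boldsymbol{v}$ satisfying $|\bigcap_{v\in\boldsymbol{v}}N(v)|\ge\varepsilon^{K}n^{0}$ also satisfies $|\bigcap_{v\in\boldsymbol{v}}N_{U^{0}}(v)|=\Omega(n)$). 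I would then produce disjoint $W^{+},W^{-}\subseteq V^{0}\setminus U^{0}$ of size $cn$ each satisfying the weighted discrepancy in property~2 by invoking a weighted variant of the Erd\H os--Goldberg--Pach--Spencer theorem on $G[V^{0}\setminus U^{0}]$ equipped with extra vertex weights $\alpha d_{U^{0}}(v)$; the assumption $\alpha\ge1/2$ is what prevents the weighted component from degenerating.

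With the triple $(W^{+},W^{-},U^{0})$ in hand and $R=V^{0}\setminus(W^{+}\cup W^{-}\cup U^{0})$ of linear size, I would next find the hypergraph matching. Fix a large constant $K$ (say $K=13$). There are $\Omega(n^{K})$ $K$-subsets of $R$, each carrying a degree triple $(d_{W^{-}}(\boldsymbol{v}),d_{W^{+}}(\boldsymbol{v}),d_{U^{0}}(\boldsymbol{v}))\in[0,Kn]^{3}$, giving only $O(n^{3})$ possible values. Restricting first to the $\Omega(n^{K})$ \emph{good} $K$-subsets (those satisfying the common-neighbourhood conditions in $V^{0}$ from \cref{lem:few-bad} and in $U^{0}$ from the previous step), pigeonhole produces a family $\mathcal{F}$ of $\Omega(n^{K-3})$ good $K$-subsets sharing a common degree triple. \cref{lem:sunflower} then extracts a sunflower of size $\Omega(|\mathcal{F}|^{1/K})=\Omega(n^{1-3/K})$; removing the kernel from each edge yields a $k$-uniform hypergraph matching $M_{0}$ (with $k\le K$) whose edges all share the same values of $d_{W^{-}},d_{W^{+}},d_{U^{0}}$ (property~3) and each have common neighbourhood of size $\Omega(n)$ inside $U^{0}$.

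To enforce property~4, form an auxiliary graph $H$ on the vertex set $M_{0}$, joining $\boldsymbol{x},\boldsymbol{y}\in M_{0}$ whenever $|N_{U^{0}}(\boldsymbol{x})\triangle N_{U^{0}}(\boldsymbol{y})|<\delta\varepsilon n^{0}$. Rerunning the proof of \cref{lem:diversity-tuples} with $W:=\bigcap_{v\in\boldsymbol{x}}N_{U^{0}}(v)\subseteq U^{0}$ as the richness witness (valid since $|W|\ge\delta n^{0}$ by the construction) shows $\deg_{H}(\boldsymbol{x})\le(n^{0})^{\delta}$ for every $\boldsymbol{x}$. \cref{prop:turan} then yields an independent set $M\subseteq M_{0}$ of size $\Omega(|M_{0}|/n^{\delta})=\Omega(n^{1-3/K-\delta})$, which is $\Omega(n^{3/4})$ provided $K$ is large and $\delta$ is small (e.g.\ $K=13$, $\delta\le1/100$). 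Setting $A:=\bigcup_{\boldsymbol{v}\in M}\boldsymbol{v}$, we have $|A|=k|M|=\Omega(n^{3/4})$, and $M$ is a perfect matching of $A$ with all the required properties.

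The main obstacle will be the discrepancy step, i.e.\ producing the weighted bound in property~2 with an implied constant independent of $\alpha$. The classical Erd\H os--Goldberg--Pach--Spencer theorem handles $e(W^{+})-e(W^{-})$ alone, but the naive strategy of applying it to $G[V^{0}\setminus U^{0}]$ and flipping signs between $W^{+}$ and $W^{-}$ can be defeated by cancellation between the internal term $e(W^{\pm})$ and the $\alpha$-weighted cross-term $\alpha e(W^{\pm},U^{0})$. I expect this to be resolved either by a variance/second-moment computation on a suitably structured random tripartition of $V^{0}$, combined with a large-deviation boost from $\Omega(n)$ to $\Omega(n^{3/2})$, or by sampling $U^{0}$ more carefully so as to control the sign of $\sum_{v\in U^{0}}(d_{W^{+}}(v)-d_{W^{-}}(v))$ relative to $e(W^{+})-e(W^{-})$ after EGPS is applied.
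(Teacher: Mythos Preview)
Your construction of the matching $M$ via pigeonhole on degree triples, the sunflower lemma, and Tur\'an's theorem is essentially the paper's argument, and your variant of the diversity step (applying richness with $W=\bigcap_{v\in\boldsymbol{x}}N_{U^{0}}(v)$ directly inside $U^{0}$ rather than transferring from $V'$ via concentration) is a legitimate alternative.

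The genuine gap is exactly where you say it is: property~2. You invoke a ``weighted variant of EGPS'' that you do not have, and your suggested fixes (a second-moment computation on a random tripartition, or controlling the sign of $\sum_{v\in U^{0}}(d_{W^{+}}(v)-d_{W^{-}}(v))$ after applying EGPS) are speculative. The paper avoids EGPS entirely and instead exploits richness a second time, as follows. Take $U^{1}$ to be a \emph{random} subset of $V'$ of size $2\ell$. By anticoncentration (\cref{lem:anticoncentration}) and the pairwise diversity coming from richness, for most vertices $x$ only $O(\sqrt{n})$ other vertices share the same value of $d_{U^{1}}(x)$; hence, ordering the vertices outside $U^{1}$ by $d_{U^{1}}$ and taking the bottom $cn$ as $W^{1}$ and the top $cn$ as $W^{2}$, the $\Omega(n)$ vertices in between already force $\min_{x\in W^{2}}d_{U^{1}}(x)-\max_{x\in W^{1}}d_{U^{1}}(x)=\Omega(\sqrt{n})$, so $\alpha e(W^{2},U^{1})-\alpha e(W^{1},U^{1})=\Omega(n^{3/2})$ deterministically. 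Now split into two cases. If the full weighted quantity $\bigl(e(W^{2})+\alpha e(W^{2},U^{1})\bigr)-\bigl(e(W^{1})+\alpha e(W^{1},U^{1})\bigr)$ is at least a quarter of this, set $U^{0}=U^{1}$, $W^{-}=W^{1}$, $W^{+}=W^{2}$ and you are done. Otherwise the internal edges must satisfy $e(W^{1})-e(W^{2})\ge\frac{3}{4}\bigl(\alpha e(W^{2},U^{1})-\alpha e(W^{1},U^{1})\bigr)$, and in particular
\[
\Bigl(e(W^{1})+\tfrac{\alpha}{2}e(W^{1},U^{1})\Bigr)-\Bigl(e(W^{2})+\tfrac{\alpha}{2}e(W^{2},U^{1})\Bigr)=\Omega(n^{3/2}).
\]
Now swap roles ($W^{-}=W^{2}$, $W^{+}=W^{1}$) and let $U^{0}$ be a uniformly random subset of $U^{1}$ of size $\ell=|U^{1}|/2$. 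The weighted discrepancy $\bigl(e(W^{+})+\alpha e(W^{+},U^{0})\bigr)-\bigl(e(W^{-})+\alpha e(W^{-},U^{0})\bigr)$ is then a $(1/2)$-hypergeometric random variable whose mean is exactly the displayed quantity, and by the symmetry observation \cref{lem:symmetric} it is at least its mean with probability $\ge 1/2$. This halving trick is why $|U^{0}|$ can be either $\ell$ or $2\ell$ in the statement, and it is the missing idea in your proposal.
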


We will prove \cref{lem:step-2} in \cref{sec:step-2}. We remark that our proof can be easily modified to give $|A|=\Omega(n^{1-\eta})$ for any $\eta>0$, and all that we actually need for the proof of \cref{lem:per-l} is that $|A|=\Omega(n^{1/2+\eta})$ for some $\eta>0$. The choice of the exponent $3/4$ is merely for concreteness.

The next ingredient
is the following lemma, showing that with positive probability we
can augment a random set of vertices in many different ways to get
induced subgraphs with many different numbers of edges.
\begin{lem}
\label{lem:sub-random}Consider any $\n D=\n D\left(n\right)\in\NN$
with $\n D=\omega\left(\log n\right)$, and suppose in a graph $G$
we have disjoint vertex subsets $W,A,U^{0}$ and a hypergraph perfect
matching $M\subseteq\binom{A}{k}$ for some $k=O\left(1\right)$,
satisfying the following properties.
\begin{enumerate}
\item $\left|U^{0}\right|\ge3\n D$, and $\left|M\right|=\Omega\left(\sqrt{\n D}\right)$;
\item $\left|N_{U^{0}}\left(\boldsymbol{x}\right)\triangle N_{U^{0}}\left(\boldsymbol{y}\right)\right|=\Omega\left(\left|U^{0}\right|\right)$
for each $\left\{ \boldsymbol{x},\boldsymbol{y}\right\} \in\binom{M}{2}$;
\item there are $d_{W},d_{U^{0}}\in\NN$ such that $d_{U^{0}}\left(\boldsymbol{v}\right)=d_{U^{0}}$
and $d_{W}\left(\boldsymbol{v}\right)=d_{W}+o\left(\sqrt{\n D}\right)$
for all $\boldsymbol{v}\in M$.
\end{enumerate}
Then, there are $B=O\left(1\right)$ and $\delta=\Omega\left(1\right)$
(depending on the implied constants in the above asymptotic notation,
but not depending on $\n D$) such that the following holds. Consider
any $\n Z\le\delta\sqrt{\n D}$, let $D$ be a uniformly random subset
of $\n D$ elements of $U^{0}$, let $U=U^{0}\setminus D$ and define $\alpha$ to satisfy
$\n D=(1-\alpha)|U^0|$.
With probability at least $1/4$,
\[
\left|\left\{ e\left(W\cup U\cup\bigcup_{\boldsymbol{z}\in Z}\boldsymbol{z}\right):\;Z\subseteq M,\;\left|Z\right|=\n Z,\;\left|e\left(U,\bigcup_{\boldsymbol{z}\in Z}\boldsymbol{z}\right)-\alpha\n Zd_{U^{0}}\right|\le B\n D\right\} \right|=\Omega\left(\n Z\sqrt{\n D}\right).
\]
\end{lem}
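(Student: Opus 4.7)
Write $a_{\boldsymbol{z}}:=d_{W\cup U}(\boldsymbol{z})+e(\boldsymbol{z})$ and $q(Z):=\sum_{\{\boldsymbol{z},\boldsymbol{z}'\}\subseteq Z}e(\boldsymbol{z},\boldsymbol{z}')$. Since the $\boldsymbol{z}\in M$ are pairwise disjoint, expanding by type yields
\[
e\Bigl(W\cup U\cup\bigcup_{\boldsymbol{z}\in Z}\boldsymbol{z}\Bigr)=\bigl(e(W)+e(U)+e(W,U)\bigr)+\sum_{\boldsymbol{z}\in Z}a_{\boldsymbol{z}}+q(Z),
\]
so counting distinct values of the left-hand side reduces to counting distinct values of $\sum a+q$ over the allowed $Z$.

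The plan is first to expose $D$ and identify a good event $\mathcal{E}$, of probability at least $1/2$, on which the $a_{\boldsymbol{z}}$'s behave well. By \cref{lem:concentration}, for each $\boldsymbol{z}\in M$ the deviation $|d_U(\boldsymbol{z})-\alpha d_{U^0}|$ exceeds $C\sqrt{\n D}$ only with probability $\exp(-\Omega(C^2))$; for $C$ a large constant, Markov's inequality gives, with probability at least $9/10$, a sub-matching $M'\subseteq M$ of size $\Omega(|M|)=\Omega(\sqrt{\n D})$ on which every $d_U(\boldsymbol{z})$ lies within $O(\sqrt{\n D})$ of $\alpha d_{U^0}$. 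Next, \cref{lem:anticoncentration} combined with the diversity hypothesis gives $\Pr(a_{\boldsymbol{x}}=a_{\boldsymbol{y}})=O(1/\sqrt{\n D})$ for each pair $\{\boldsymbol{x},\boldsymbol{y}\}\in\binom{M}{2}$, so the expected number of colliding pairs in $M'$ is $O(|M|^2/\sqrt{\n D})=O(\sqrt{\n D})$; a second Markov step keeps this within a constant factor of its expectation. Applying Tur\'an's theorem (\cref{prop:turan}) to the collision graph then yields $M^*\subseteq M'$ of size $\Omega(\sqrt{\n D})$ on which all $a_{\boldsymbol{z}}$ are pairwise distinct. Every $Z\in\binom{M^*}{\n Z}$ automatically satisfies the auxiliary constraint on $\mathcal{E}$, since $\bigl|e(U,\bigcup_{\boldsymbol{z}\in Z}\boldsymbol{z})-\alpha\n Zd_{U^0}\bigr|\le\sum_{\boldsymbol{z}\in Z}|d_U(\boldsymbol{z})-\alpha d_{U^0}|=O(\n Z\sqrt{\n D})=O(\n D)\le B\n D$ for a suitable constant $B$.

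It remains to produce $\Omega(\n Z\sqrt{\n D})$ distinct values of $\sum a+q$ over $Z\in\binom{M^*}{\n Z}$. Absent $q$, distinctness of the $a_{\boldsymbol{z}}$ would immediately yield at least $\n Z(|M^*|-\n Z)+1=\Omega(\n Z\sqrt{\n D})$ values, by the classical lower bound of $k(n-k)+1$ on the number of $k$-subset sums of $n$ distinct integers. To handle $q$, the plan is to fix a base set $Z_0\in\binom{M^*}{\n Z}$ and consider the $\n Z(|M^*|-\n Z)$ single-element swaps $Z(\boldsymbol{x},\boldsymbol{y})=(Z_0\setminus\{\boldsymbol{x}\})\cup\{\boldsymbol{y}\}$, whose $f$-value changes by $(a_{\boldsymbol{y}}-a_{\boldsymbol{x}})+e(\boldsymbol{y},\bigcup(Z_0\setminus\{\boldsymbol{x}\}))-e(\boldsymbol{x},\bigcup(Z_0\setminus\{\boldsymbol{x}\}))$. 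A collision between two swaps becomes an additive identity $a_{\boldsymbol{y}_1}-a_{\boldsymbol{y}_2}-a_{\boldsymbol{x}_1}+a_{\boldsymbol{x}_2}=c$ for an integer $c$ determined by the fixed cross-edge structure, and the idea is to bound the number of colliding quadruples by applying \cref{lem:anticoncentration} to this four-term signed sum over the randomness of $U$, then invoke Markov's inequality on $\mathcal{E}$ to conclude that at most $o(\n Z\sqrt{\n D})$ swap pairs collide.

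The hard part will be establishing this four-term anticoncentration: \cref{lem:diversity-tuples} gives a pairwise diversity estimate, but the signed random variable $d_U(\boldsymbol{y}_1)-d_U(\boldsymbol{y}_2)-d_U(\boldsymbol{x}_1)+d_U(\boldsymbol{x}_2)$ requires the corresponding signed coefficient vector on $U^0$ to have $\Omega(|U^0|)$ nonzero coordinates, which is not automatic from pair diversity and can fail if the four neighbourhoods cancel in that particular signed pattern. I expect to address this either by proving a higher-order diversity lemma (in the spirit of \cref{lem:few-bad} and \cref{lem:diversity-tuples}, via an additional pigeonholing step on the underlying $(\delta,\varepsilon)$-richness), or alternatively by first refining $M^*$ using \cref{lem:sunflower} so that the cross-edge values $e(\boldsymbol{z},\boldsymbol{z}')$ are essentially constant on a large sub-family --- thereby absorbing the $q$-contribution into a shift of $a$ and reducing the whole question to the already-handled pair-wise case.
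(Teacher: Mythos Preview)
Your decomposition $e(W\cup U\cup V_Z)=\text{const}+\sum_{\boldsymbol z\in Z}a_{\boldsymbol z}+q(Z)$ and the construction of $M^*$ with pairwise distinct $a_{\boldsymbol z}$'s, all within $O(\sqrt{\n D})$ of $\alpha d_{U^0}+d_W$, are fine. The handling of $q$ is where the argument breaks, and neither proposed fix repairs it.

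First, the swap-collision count does not close even if you are granted four-term anticoncentration. There are $\Theta(\n Z|M^*|)=\Theta(\n Z\sqrt{\n D})$ single swaps, hence $\Theta(\n Z^{2}\n D)$ pairs of swaps; an $O(1/\sqrt{\n D})$ collision probability yields $O(\n Z^{2}\sqrt{\n D})$ expected colliding pairs, which is a factor $\n Z$ larger than the number of swaps. The Cauchy--Schwarz bound $m\ge N^2/(N+2C)$ then gives only $\Omega(\sqrt{\n D})$ distinct values, short of the target by a factor of $\n Z$. Your claim that ``at most $o(\n Z\sqrt{\n D})$ swap pairs collide'' would require $\n Z=o(1)$.

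Second, four-term anticoncentration is genuinely unavailable from the hypotheses. Pairwise diversity does not force the signed coefficient vector to have $\Omega(|U^0|)$ nonzero entries: partition $U^0$ into four equal blocks $A,B,C,D$ and take $N_{U^0}(\boldsymbol y_1)=A\cup B$, $N_{U^0}(\boldsymbol y_2)=A\cup C$, $N_{U^0}(\boldsymbol x_1)=B\cup D$, $N_{U^0}(\boldsymbol x_2)=C\cup D$; every pairwise symmetric difference has size $|U^0|/2$, all degrees into $U^0$ coincide, yet $d_D(\boldsymbol y_1)-d_D(\boldsymbol y_2)-d_D(\boldsymbol x_1)+d_D(\boldsymbol x_2)\equiv 0$. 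Your fix (a) invokes $(\delta,\varepsilon)$-richness, but \cref{lem:sub-random} is stated abstractly without it; importing richness would mean restating the lemma and redoing \cref{lem:step-2} to export higher-order diversity, which is substantial new content. Your fix (b) asks for a large subfamily on which all pairwise values $e(\boldsymbol z,\boldsymbol z')$ coincide; that is a monochromatic-clique problem in a $(k^2{+}1)$-edge-colouring of $K_{|M^*|}$, and Ramsey only guarantees size $O(\log|M^*|)$. The sunflower lemma is about set systems and does not uniformise a pairwise function.

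The paper sidesteps both obstacles by a two-level construction with scale separation, exposing the randomness in two stages. It splits $M=S^0\cup X^0$, first samples a superset $D^1\supseteq D$ of size $2\n D$, and uses pairwise anticoncentration in $D^1$ to find $S^-,S^+\subseteq S^0$ whose degrees into $D^1$ differ by $\Theta(\sqrt{\n D})$. Switching one element at a time between $S^+$ and $S^-$ produces $(\n Z{-}1)$-sets $S_0,\dots,S_{\n Z-1}$; after exposing $D\subset D^1$, an $\Omega(\n Z)$-subsequence of the associated edge counts $e_i$ has consecutive gaps exceeding a fixed multiple of $\sqrt{\n D}$ (via \cref{lem:well-separated}). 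Finally, for each such $i$ the \emph{fresh} randomness of $D$ inside $D^1$ gives pairwise anticoncentration for $d_{U_i}(\boldsymbol x)-d_{U_i}(\boldsymbol y)$ over $\boldsymbol x,\boldsymbol y\in X^0$, yielding $\Omega(\sqrt{\n D})$ elements with distinct degrees into $U_i$, all confined to a window of width $O(\sqrt{\n D})$. Because the $e_i$'s along the subsequence are separated by more than this window width, values for different $i$ cannot collide, and for fixed $i$ they are distinct by construction. This replaces your four-term problem by two pairwise problems and your collision count by a deterministic separation-of-scales argument.
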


We will prove \cref{lem:sub-random} in \cref{sec:sub-random}, using some ideas from~\cite{KS, NST16}. To interpret its conclusion in words, it says that one can obtain $\Omega\left(\n Z\sqrt{\n D}\right)$ induced subgraphs with different numbers of edges, by augmenting $W\cup U$ with different subsets $Z\subseteq M$ of size $\n Z$. Moreover, this is still true if we restrict our attention to those subsets $Z$ such that there are about the expected number of edges $\alpha \n Z d_{U^0}$ between $U$ and $Z$.

Finally, we show how to combine \cref{lem:step-2} and \cref{lem:sub-random}
to prove \cref{lem:per-l}.
\begin{proof}[Proof of \cref{lem:per-l}]
Apply \cref{lem:step-2} with $\alpha=\left(\ell-c'n\right)/\ell$, for some small $c'$ (depending on $c$) that will be chosen later to satisfy certain inequalities. Until we finally determine the value of $c'$, the constants
implied by all asymptotic notation in this section will be independent
of $c'$ (that is, if say $f\le c'n$, we may write $f=O(c'n)$ but not $f=O(n)$). Choose $\n D$ to satisfy $\n D=(1-\alpha)|U^0|$ (so $\n D=c'n$ or $\n D=2c'n$, and in particular $\n D\le 2c'n$). Let $\n W=cn$ and consider uniformly random orderings $w_{1}^{-},\dots,w_{\n W}^{-}$
of $W^{-}$ and $w_{1}^{+},\dots,w_{\n W}^{+}$ of $W^{+}$. For $0\le i\le\n W$
let
\[
W_{i}^{-}=\left\{ w_{1}^{-},\dots,w_{\n W-i}^{-}\right\} ,\quad W_{i}^{+}=\left\{
\vphantom{w_{\n W-i}^{-}}
w_{1}^{+},\dots,w_{i}^{+}\right\} ,\quad W_{i}=W_{i}^{-}\cup W_{i}^{+}.
\]
This means each individual $W_{i}^{-}$ (respectively $W_{i}^{+}$)
is a uniformly random subset of $\n W - i$ elements of $W^{-}$ (respectively,
$i$ elements of $W^{+}$). Define
\[
d_{W_{i}^{-}}=\frac{\n W-i}{\n W}d_{W^{-}},\quad d_{W_{i}^{+}}=\frac{i}{\n W}d_{W^{+}},\quad d_{W_{i}}=d_{W_{i}^{-}}+d_{W_{i}^{+}}.
\]

Now, for each $0\le i\le\n W$ and $\boldsymbol{v}\in M$, the random
variable $d_{W_{i}^{-}}\left(\boldsymbol{v}\right)$ (respectively
$d_{W_{i}^{+}}\left(\boldsymbol{v}\right)$) is of $\left(\n W,p,O\left(1\right)\right)$-hypergeometric
type, for $p=\left(\n W-i\right)/\n W$ (respectively, for $p=i/\n W$),
and has mean $d_{W_{i}^{-}}$ (respectively, mean $d_{W_{i}^{+}}$).
By \cref{lem:concentration} (with $t=\sqrt n \log n$) and the union bound, we can fix an outcome
of the orderings $w_{1}^{-},\dots,w_{\n W}^{-}$ and $w_{1}^{+},\dots,w_{\n W}^{+}$
such that $
\left|d_{W_{i}}\left(\boldsymbol{v}\right)-d_{W_{i}}\right|\le\sqrt{n}\log n
$
for each $0\le i\le\n W$ and $\boldsymbol{v}\in M$. (Note that if $p=o(1)$ the estimate in \cref{lem:concentration} only becomes stronger).

This concentration would suffice to prove an approximate version of
\cref{conj:EFS}, that $|\Psi(G)| = n^{5/2}/\log^{O(1)} n$ (simply using the single matching $M$ to augment each $W_i$). However, in order to obtain an exact result we need to
eliminate the logarithmic factor in the estimate for $\left|d_{W_{i}}\left(\boldsymbol{v}\right)-d_{W_{i}}\right|$. We have the freedom to do this because $M$ (coming from \cref{lem:step-2}, of size $\Omega(n^{3/4})$) is much larger than the necessary size $\Theta(\sqrt{n})$ of our ``augmenting sets'' (as outlined in \cref{sec:discussion}). In fact, we could use the pigeonhole principle to easily show that for each $i$ there is a subset $M_i\subseteq M$ of size $\Theta(\sqrt{n})$ such that the degrees $d_{W_i}(\boldsymbol v)$, for $\boldsymbol v\in M_i$, are contained in a tiny interval of length only $O(n^{1/4}\log n)$, centered at some point $d_i$. But because we require consistency between the $i$ (in particular, we do not want the $d_i$ to vary too much), things are a bit more delicate, and we will apply \cref{lem:lonely}.
\begin{claim}
\label{claim:lonely}There are $d_{0},\dots,d_{\n W}\in\NN$ and $M_{1},\dots,M_{\n W}\subseteq M$
such that the following hold.
\begin{enumerate}
\item [(i)]Each $\left|M_{i}\right|\ge\sqrt{n}$;
\item [(ii)]For each $0\le i\le\n W$ and each $\boldsymbol{v}\in M_{i}$,
we have $\left|d_{W_{i}}\left(\boldsymbol{v}\right)-d_{i}\right|=o\left(\sqrt{n}\right)$;
\item [(iii)]For each $0<i\le\n W$ we have $\left|d_{i}-d_{i-1}\right|=O\left(\sqrt{n}\log n\right)$,
and actually $\left|d_{i}-d_{i-1}\right|=o\left(\sqrt{n}\right)$
for all but $O\left(n^{1/4}\log^{3}n\right)$ indices $i$.
\end{enumerate}
\end{claim}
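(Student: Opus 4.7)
The plan is to apply the ``lonely particle'' bound (\cref{lem:lonely}) on a suitably chosen sequence of consecutive time windows, treating the edges $\boldsymbol{v}\in M$ as particles. For each $\boldsymbol{v}\in M$ and each $0\le i\le\n W$, I would set the ``position'' to be $p_i(\boldsymbol{v})=d_{W_i}(\boldsymbol{v})-\lfloor d_{W_i}\rfloor$; subtracting off the integer part of the drifting center $d_{W_i}$ ensures that, by the concentration bound fixed just before the claim, all positions $p_i(\boldsymbol{v})$ lie in a single integer interval of length $\lambda=O(\sqrt{n}\log n)$. Moreover, since $W_i$ is obtained from $W_{i-1}$ by a single swap, and $d_{W_i}-d_{W_{i-1}}=(d_{W^+}-d_{W^-})/\n W=O(1)$, each particle moves at speed at most $\rho=O(1)$ per time step.

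I then take $\sigma=\sqrt{n}/\log n$ (so $\sigma=o(\sqrt{n})$, as required by (ii)) and $\mu=\sqrt{n}$ (matching (i)). The condition $T\le|M|\sigma^2/(8\rho\mu\lambda)$ of \cref{lem:lonely} then permits $T=\Omega(n^{3/4}/\log^3 n)$, using $|M|=\Omega(n^{3/4})$. I partition $[0,\n W]$ into $r=O(n/T)=O(n^{1/4}\log^3 n)$ consecutive pieces $J_1,\dots,J_r$, each of length at most $T$, and on each $J_j$ I apply \cref{lem:lonely} to extract an edge $a_j^*\in M$ that is never lonely on $J_j$. Since the shift $\lfloor d_{W_i}\rfloor$ cancels in the difference $p_i(\boldsymbol{v})-p_i(a_j^*)$, this translates to: for every $i\in J_j$, at least $\mu$ edges $\boldsymbol{v}\in M$ satisfy $|d_{W_i}(\boldsymbol{v})-d_{W_i}(a_j^*)|\le\sigma$.

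I then define $d_i:=d_{W_i}(a_j^*)\in\NN$ and $M_i:=\{\boldsymbol{v}\in M:|d_{W_i}(\boldsymbol{v})-d_i|\le\sigma\}$ for $i\in J_j$. Property (i) is immediate from the never-lonely conclusion, and (ii) holds because $\sigma=o(\sqrt{n})$. For (iii), within a single piece I get $|d_i-d_{i-1}|=|d_{W_i}(a_j^*)-d_{W_{i-1}}(a_j^*)|=O(k)=O(1)=o(\sqrt{n})$. At each of the $r-1=O(n^{1/4}\log^3 n)$ transitions between consecutive pieces, a triangle inequality using that $d_{W_i}(a_j^*)$ and $d_{W_i}(a_{j-1}^*)$ both lie within $\sqrt{n}\log n$ of $d_{W_i}$ gives $|d_i-d_{i-1}|\le 2\sqrt{n}\log n+O(1)=O(\sqrt{n}\log n)$.

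The main obstacle is tightly balancing the four parameters of \cref{lem:lonely}: (i) forces $\mu=\sqrt{n}$, (ii) forces $\sigma=o(\sqrt{n})$, and (iii) restricts the number of time windows to $O(n^{1/4}\log^3 n)$. With $|M|=\Omega(n^{3/4})$, these constraints close only after passing to shifted positions so that $\lambda=O(\sqrt{n}\log n)$ (rather than the $O(n)$ range that unshifted positions would occupy across all $i$). Once the shift is in place, the choice $\sigma=\sqrt{n}/\log n$ is essentially the unique scale at which every constraint is simultaneously tight up to polylogarithmic factors.
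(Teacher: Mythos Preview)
Your proposal is correct and follows essentially the same approach as the paper's proof: the paper likewise sets $p_i(\boldsymbol{v})=d_{W_i}(\boldsymbol{v})-d_{W_i}$, takes $\mu=\sqrt{n}$, $\sigma=\sqrt{n}/\log n$, $\lambda=2\sqrt{n}\log n$, $\rho=2k$, partitions $[0,\n W]$ into sub-ranges of length $\tau=|M|\sigma^2/(8\rho\mu\lambda)=O(n^{3/4}/\log^3 n)$, applies \cref{lem:lonely} on each sub-range to pick a never-lonely $\boldsymbol{v}_T$, and sets $d_i=d_{W_i}(\boldsymbol{v}_T)$. The only cosmetic difference is that you subtract $\lfloor d_{W_i}\rfloor$ rather than $d_{W_i}$ to keep positions integral, which changes nothing of substance.
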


\begin{proof}
The indices $i$ will represent points in time. Let $\mu=\sqrt{n}$, let
$\sigma=\sqrt{n}/\log n=o\left(\sqrt{n}\right)$, let $\lambda=2\sqrt{n}\log n$
and let $I$ be the interval of integers between $-\lambda/2$ and
$\lambda/2$. Let $R=M$ (recalling that $\left|M\right|=\Omega\left(n^{3/4}\right)$)
and for each $0\le i\le\n W$ and $\boldsymbol{v}\in R$ let $p_{i}\left(\boldsymbol{v}\right)=d_{W_{i}}\left(\boldsymbol{v}\right)-d_{W_{i}}\in I$.

Note that each $\boldsymbol v\in M$ has size $k$, so for each $0<i\le\n W$, we have $\left|d_{\boldsymbol v}\left(\vphantom{w_{\n W-i+1}^-}w_i^+\right)-d_{\boldsymbol v}\left(w_{\n W-i+1}^-\right)\right|\le k$ and therefore $\left|d_{W_i}\left(\boldsymbol{v}\right)-d_{W_{i-1}}\left(\boldsymbol{v}\right)\right|\le k$. Also, we can compute
\begin{equation}
\left|d_{W_i}-d_{W_{i-1}}\right|=\left|\frac{d_{W^+}-d_{W^-}}{\n W}\right|\le k.\label{eq:ideal-slow-change}
\end{equation}
So, with $\rho=2k$, we have $\left|p_{i}\left(\boldsymbol{v}\right)-p_{i-1}\left(\boldsymbol{v}\right)\right|\le\rho$. Divide the range of ``times'' between $0$
and $\n W$ into $\n W/\tau$ sub-ranges of lengths $\tau=\left|R\right|\sigma^{2}/\left(8\rho\mu\lambda\right)=O(n^{3/4}/\log^3 n)$.
For each such sub-range $T$, by \cref{lem:lonely} there is some $\boldsymbol{v}_{T}\in R$
which is never lonely in that range; fix such a $\boldsymbol{v}_{T}$
and for each $i\in T$ let $d_{i}=d_{W_{i}}\left(\boldsymbol{v}_{T}\right)$.
For each $0\le i\le\n W$ let $M_{i}\subseteq M$ be a set of $\mu$
elements $\boldsymbol{v}\in M$ satisfying $\left|d_{W_{i}}\left(\boldsymbol{v}\right)-d_{i}\right|\le\sigma$, which exists by the definition of loneliness.
Recalling \cref{eq:ideal-slow-change}, observe that $\left|d_{i}-d_{i-1}\right|\le\lambda=O\left(\sqrt{n}\log n\right)$
for all $0<i\le\n W$. Moreover, for all $i$ except the $\n W/\tau=O\left(n^{1/4}\log^{3}n\right)$
times where there is a ``transition'' between sub-ranges, there is $\boldsymbol v$ such that $\left|d_{i}-d_{i-1}\right|=\left|d_{W_i}(\boldsymbol v)-d_{W_{i-1}}(\boldsymbol v)\right|\le k=o\left(\sqrt{n}\right)$.
\end{proof}
Next, (more or less) as described in \cref{sec:discussion}, we identify a subsequence of indices $i$ leading to subgraph sizes that are ``well-separated'' in a certain sense. Let $e_{i}=e\left(W_{i}\right)+\alpha e\left(U^{0},W_{i}\right)+\n Zd_{i}$,
where $\n Z=\delta\sqrt{c'n}/k\le \delta \sqrt{\n D}$ for some small $\delta=\delta\left(C\right)>0$
(not depending on $c'$) to be determined. The precise significance of these quantities $e_i$ will become clear later, but the rough idea (as sketched in \cref{sec:discussion}) is that we will eventually want to consider subgraphs consisting of some $W_i$, a random $\alpha$-proportion of the elements of $U^0$, and $\n Z$ vertices of $M_i$. Note that each $d_{i}$ was defined to be equal to some $d_{W_i}(\boldsymbol v)\le k n$, so $\left|d_{\n W}-d_0\right|\le kn=O(n)$,
and recall that $\n Z\le\sqrt {c'n}$. So, for small $c'$, by property 2 of \cref{lem:step-2}, we have
$$e_{\n W}-e_{0}=
\left(e\left(W^{+}\right)+\alpha e\left(U^{0},W^{+}\right)\right)-\left(e\left(W^{-}\right)+\alpha e\left(U^{0},W^{-}\right)\right)+\n Z (d_{\n W}-d_0)=\Omega\left(n^{3/2}\right).$$
Now, for $0<i\le\n W$ let $\Delta_{i}=e_{i}-e_{i-1}$. Observe that
\begin{align*}
&\left|\left(e\left(W_{i}\right)+\alpha e\left(U^{0},W_{i}\right)\right)-\left(e\left(W_{i-1}\right)+\alpha e\left(U^{0},W_{i-1}\right)\right)\right|\\
&\qquad = \left|\left(d_{W_i}\left(
\vphantom{w^-_{\n W-i}}
w^+_i\right)+\alpha d_{U^0}\left(
\vphantom{w^-_{\n W-i}}
w^+_i\right)\right)-\left(d_{W_{i-1}}\left(w^-_{\n W-i+1}\right)+\alpha d_{U^0}\left(w^-_{\n W-i+1}\right)\right)\right|
\le (1+\alpha)n\le 2n,
\end{align*}
so the only way to have $\Delta_{i}> 3n$ is if $\left|d_i-d_{i-1}\right|=\Omega(\sqrt n)$. By (iii) of \cref{claim:lonely},
\[
\sum_{i:\left|\Delta_{i}\right|>3n}\left|\Delta_{i}\right|=O\left(\n Z\left(n^{1/4}\log^{3}n\right)\left(\sqrt{n}\log n\right)\right)=o\left(n^{3/2}\right).
\]
By \cref{lem:well-separated} (with $\tau=\n W=\Theta(n)$, $\lambda=\Omega(n^{3/2})$, $\rho=3n$, $\kappa=o(n^{3/2})$ and $\sigma=n$) there is an increasing subsequence of
indices $0=i_{1},\dots,i_{t}=\n W$, with $t=\Omega\left(\sqrt{n}\right)$,
such that $e_{i_{j}}-e_{i_{j-1}}\ge n$ for each $1<j\le t$.

Now, let $D$ be a uniformly random subset of $\n D$ elements of
$U^{0}$, and let $U=U^{0}\setminus D$. For a collection $Z$ of vertex sets we write $V_{Z}=\bigcup_{\boldsymbol{z}\in Z}\boldsymbol{z}$, and for each $0\le i\le\n W$ and some $B$ to be determined, define
\[
\Psi_{i}=\left\{ e\left(W_{i}\cup U\cup V_{Z}\right):\;Z\subseteq M_{i},\;\left|Z\right|=\n Z,\;\left|e\left(U,V_{Z}\right)-\alpha\n Zd_{U^{0}}\right|\le B\n D\right\} .
\]
Now the significance of the quantities $e_i$ should be more clear: we expect the values in $\Psi_i$ to be about $e(U)+e_i+\alpha \n Z d_{U^0}$, so the idea is that the separation we have established between the $e_{i_j}$ should translate to the $\Psi_{i_j}$ not interfering too much with each other.

Note that we can apply \cref{lem:sub-random} to determine $\delta=\Omega\left(1\right)$
and $B=O\left(1\right)$ such that for each $0\le i\le\n W$, $\left|\Psi_{i}\right|=\Omega\left(\n Z\sqrt{\n D}\right)=\Omega\left(\n D\right)$
with probability at least $1/4$. Indeed, the first condition of \cref{lem:sub-random} follows from (i) in \cref{claim:lonely} and a sufficiently small choice of $c'$, the second condition follows from property 4 of \cref{lem:step-2}, and the third condition follows from (ii) in \cref{claim:lonely} and property 3 of \cref{lem:step-2}. We will next show that there is an outcome of $U$ for which many $\Psi_{i_j}$ are large, and in addition the cumulative deviations introduced by the randomness of $U$ do not too severely affect the separation we established so far. To this end, for each $0<i\le\n W$ define
\[
g_{i}=\left(d_{U}\left(
\vphantom{w_{\n W-i+1}^{-}}
w_{i}^{+}\right)-d_{U}\left(w_{\n W-i+1}^{-}\right)\right)-\left(\alpha d_{U^{0}}\left(
\vphantom{w_{\n W-i+1}^{-}}
w_{i}^{+}\right)-\alpha d_{U^{0}}\left(w_{\n W-i+1}^{-}\right)\right).
\]
Basically, $|g_i|$ measures the deviation of the separation $e(U,W_i)-e(U,W_{i-1})$ from its expected value $\alpha e(U^0,W_i)-\alpha e(U^0,W_{i-1})$. We will control the cumulative deviation $\sum_{i=1}^{\n W}\left|g_{i}\right|$; the absolute deviations $\left|e(U,W_i)-\alpha e(U^0,W_i)\right|$ are unfortunately too large to control directly.
\begin{claim}
\label{claim:gaps-concentration}The following hold together with
positive probability.
\begin{enumerate}
\item [(i)]There is a subset $\mathcal{J}$ of $\left(0.1\right)t$ indices
$j$ for which $\left|\Psi_{i_{j}}\right|=\Omega\left(\n D\right)$ (that is, a positive proportion of $\Psi_{i_{j}}$ are large);
\item [(ii)]$\sum_{i=1}^{\n W}\left|g_{i}\right|\le O\left(n\sqrt{\n D}\right)$.
\end{enumerate}
\end{claim}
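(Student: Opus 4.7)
The plan is to show that parts~(i) and~(ii) each hold with positive constant probability (with respect to the random set $D$) and combine via the union bound. I will arrange (ii) to hold with probability close to $1$, and (i) to hold with some fixed positive constant probability, so that their intersection also has positive probability.

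For part~(i), I would fix any index $i$ and directly apply \cref{lem:sub-random} with $W = W_i$ and matching $M_i$. The main task is to verify the hypotheses: $|U^0| \ge 3\n D$ holds provided $c'$ is chosen small enough relative to $c$ (so that $|U^0|/\n D = 1/(1-\alpha) = \ell/(c'n) \ge 3$); $|M_i| \ge \sqrt n = \Omega(\sqrt{\n D})$ since $\n D \le 2c'n$; the diversity condition is inherited from $M \supseteq M_i$ via property~4 of \cref{lem:step-2}; the exact uniformity $d_{U^0}(\boldsymbol v) = d_{U^0}$ is property~3 of \cref{lem:step-2}; and the approximate uniformity $|d_{W_i}(\boldsymbol v) - d_i| \le \sqrt n/\log n = o(\sqrt{\n D})$ is~(ii) of \cref{claim:lonely}. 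The lemma then yields $|\Psi_{i_j}| = \Omega(\n Z \sqrt{\n D}) = \Omega(\n D)$ with probability at least $1/4$ for each of the $t$ indices $j$ separately. Taking $Y$ to be the number of such ``good'' $j$, linearity gives $\E Y \ge t/4$, and since $Y \le t$ always, a standard first-moment manipulation yields $\Pr(Y \ge 0.1 t) \ge (1/4 - 0.1)/(1 - 0.1) = 1/6$.

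For part~(ii), the plan is a simple first-moment computation. Using $d_U(v) = d_{U^0}(v) - d_D(v)$ (valid since $w_i^{\pm} \notin U^0$), one can rewrite
\[
g_i = (1-\alpha)\bigl(d_{U^0}(w_i^+) - d_{U^0}(w_{\n W - i + 1}^-)\bigr) - \bigl(d_D(w_i^+) - d_D(w_{\n W - i + 1}^-)\bigr),
\]
which has mean zero (since $\E d_D(v) = (1-\alpha) d_{U^0}(v)$) and whose random part is of $(|U^0|, 1-\alpha, O(1))$-hypergeometric type with variance $O((1-\alpha)|U^0|) = O(\n D)$ by the variance estimate behind \cref{lem:concentration}. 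Hence $\E|g_i| = O(\sqrt{\n D})$ by Cauchy--Schwarz, and summing over the $\n W = \Theta(n)$ values of $i$ gives $\E\sum_i |g_i| = O(n\sqrt{\n D})$. Markov's inequality then yields~(ii) with probability arbitrarily close to $1$, say at least $0.99$. Combining with~(i) by the union bound finishes the proof, with positive probability at least $1/6 - 0.01 > 0$.

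The main obstacle is not really technical but organisational: carefully matching the intricate setup produced by \cref{lem:step-2} and \cref{claim:lonely} against the hypotheses of \cref{lem:sub-random}, and keeping track of how the various implicit constants depend on $c'$ (which must be chosen small enough for all the size bounds to go through). Once this bookkeeping is done, both parts reduce to elementary probabilistic arguments.
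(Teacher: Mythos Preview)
Your proposal is correct and follows essentially the same approach as the paper. The only cosmetic differences are that the paper bounds $\E|g_i|$ by summing the tail bound from \cref{lem:concentration} over $r$ (rather than using Cauchy--Schwarz with a variance estimate), and phrases the first-moment argument for~(i) as Markov on the complement $|\overline{\mathcal J}|$ rather than reverse Markov on the number of good indices; both variations lead to exactly the same thresholds $1/6$ and $\ge 0.9$.
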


\begin{proof}
First we show that (i) holds with probability at least $1/6$. As
discussed above, for each $1\le j\le t$, by \cref{lem:sub-random}
we have $\left|\Psi_{i_{j}}\right|=\Omega\left(\n D\right)$ with
probability at least $1/4$. Let $\overline{\mathcal{J}}$ be the
set of $j$ for which this fails, so $\E\left|\overline{\mathcal{J}}\right|\le3t/4$
and by Markov's inequality, $\left|\overline{\mathcal{J}}\right|\le\left(0.9\right)t$
with probability at least $1/6$.

Next we show that (ii) holds with probability at least $0.9$, meaning
that we can use the union bound to show that (i) and (ii) hold simultaneously
with positive probability. For this, note that for each $0<i\le\n W$,
$g_{i}$ is of $\left(\left|U^{0}\right|,\n D/\left|U^{0}\right|,O\left(1\right)\right)$-hypergeometric
type and has mean zero (because $\E d_U(w)=\alpha d_{U^0}(w)$ for any $w\in W$), so by \cref{lem:concentration} we have
\[
\Pr\left(\left|g_{i}\right|\ge r\right)\le e^{-\Omega\left(r^{2}/\n D\right)}.
\]
For a sufficiently large constant $Q$ we have
\begin{align*}
\E\left|g_{i}\right| & =\sum_{r=1}^{\infty}\Pr\left(\left|g_{i}\right|\ge r\right)\le Q\sqrt{\n D}+\sum_{r=Q\sqrt{\n D}}^{\infty}e^{-\Omega\left(r^{2}/\n D\right)}\le2Q\sqrt{\n D},\\
\E\sum_{i=1}^{\n W}\left|g_{i}\right| & \le2Q\n W\sqrt{\n D},
\end{align*}
and by Markov's inequality $\sum_{i=1}^{\n W}\left|g_{i}\right|\le20Q\n W\sqrt{\n D}=O\left(n\sqrt{\n D}\right)$
with probability at least $0.9$.
\end{proof}
Fix an outcome of $U$ such that the above properties hold.

We now take a moment to  summarise the situation so far. We have  $c'n\le \n D\le 2c'n$ and $\n Z=\Theta(\sqrt{\n D})$ for some small constant $c'$ (and the constants in all asymptotic notation are independent of $c'$). With $e_i=e\left(W_{i}\right)+\alpha e\left(U^{0},W_{i}\right)+\n Zd_{i}$, we have a subsequence of
indices $0=i_{1},\dots,i_{t}=\n W$, for $t=\Omega\left(\sqrt{n}\right)$,
such that $e_{i_{j}}-e_{i_{j-1}}\ge n$ for each $1<j\le t$. We also have matchings $M_i$ such that the degrees $d_{W_{i}}\left(\boldsymbol{v}\right)$, for $0\le i\le\n W$ and $\boldsymbol{v}\in M_{i}$, are very tightly controlled (to be precise, \cref{claim:lonely} (ii) says that $\left|d_{W_{i}}\left(\boldsymbol{v}\right)-d_{i}\right|=o\left(\sqrt{n}\right)$). Moreover, \cref{claim:gaps-concentration} shows that many $\left|\Psi_{i_{j}}\right|$ are large (specifically, $\left|\Psi_{i_{j}}\right|=\Omega\left(\n D\right)$ for $\Omega(\sqrt n)$ different $j$), and the cumulative deviation $\sum_{i=1}^{\n W}\left|g_{i}\right|\le O\left(n\sqrt{\n D}\right)$ caused by dropping to a random subset $U=U^0\setminus D$ is not too severe. We next show that many of the $\Psi_{i_{j}}$ are disjoint, which essentially completes the proof of \cref{conj:EFS}.
\begin{claim}
For sufficiently small $c'$, there is a subset $\mathcal{J}'\subseteq\mathcal{J}$
of $\Omega\left(n^{1/2}\right)$ indices $j$ among which each $\Psi_{i_{j}}$
is disjoint.
\end{claim}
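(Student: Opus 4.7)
The plan is to first show that every element of $\Psi_{i_j}$ lies in an interval of length $O(c'n)$ around a value $\psi_{i_j}$ depending only on $j$, and then to extract $\mathcal{J}'\subseteq\mathcal{J}$ of size $\Omega(\sqrt n)$ on which the $\psi_{i_j}$'s are pairwise more than this length apart; pairwise disjoint $\Psi_{i_j}$'s then follow.

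For the first step, I would expand
\[
e(W_{i_j}\cup U\cup V_Z)=e(W_{i_j})+e(U)+e(V_Z)+e(W_{i_j},U)+e(W_{i_j},V_Z)+e(U,V_Z).
\]
The constraint $|e(U,V_Z)-\alpha\n Z d_{U^0}|\le B\n D=O(c'n)$ from the definition of $\Psi_{i_j}$ handles the last term; trivially $e(V_Z)=O(\n Z^2)=O(c'n)$; and property~(ii) of \cref{claim:lonely} (which says $d_{W_{i_j}}(\boldsymbol{v})=d_{i_j}+o(\sqrt n)$ for $\boldsymbol{v}\in M_{i_j}$) gives $e(W_{i_j},V_Z)=\sum_{\boldsymbol{v}\in Z}d_{W_{i_j}}(\boldsymbol{v})=\n Z d_{i_j}+o(\sqrt{c'}n)$. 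Writing $h_i:=e(U,W_i)-\alpha e(U^0,W_i)$ gives $e(W_{i_j},U)=\alpha e(U^0,W_{i_j})+h_{i_j}$. Combining and using $e_{i_j}=e(W_{i_j})+\alpha e(U^0,W_{i_j})+\n Z d_{i_j}$, every element of $\Psi_{i_j}$ equals $C+\psi_{i_j}+O(c'n)$, where $C:=e(U)+\alpha\n Z d_{U^0}$ is $j$-independent and $\psi_i:=e_i+h_i$. For $c'$ sufficiently small, setting $\sigma=O(c'n)$, disjointness of $\Psi_{i_j}$ and $\Psi_{i_{j'}}$ follows from $|\psi_{i_j}-\psi_{i_{j'}}|>\sigma$.

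For the second step, I would apply \cref{lem:well-separated} to the sequence $(\psi_{i_j})_{j=0}^t$, with $\lambda=\psi_{i_t}-\psi_{i_0}=(e_{\n W}-e_0)+(h_{\n W}-h_0)=\Omega(n^{3/2})$ for small $c'$, $\rho=5n$, and $\sigma$ as above. The key estimate on the ``excess'' $\kappa=\sum_{\Delta_j>\rho}\Delta_j$ uses the decomposition $\Delta_j=(e_{i_j}-e_{i_{j-1}})+(h_{i_j}-h_{i_{j-1}})$: the first part has total skip-excess $\sum_j(e_{i_j}-e_{i_{j-1}}-4n)_+=o(n^{3/2})$ inherited from the prior application of \cref{lem:well-separated} with $\rho=3n$, $\kappa=o(n^{3/2})$ in the construction of $(i_j)_j$, and the second part has total $\sum_j|h_{i_j}-h_{i_{j-1}}|\le\sum_i|g_i|\le O(\sqrt{c'}n^{3/2})=o(n^{3/2})$ by \cref{claim:gaps-concentration}(ii). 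Thus $\kappa=o(n^{3/2})$, and \cref{lem:well-separated} yields $s=\Omega(\sqrt n)$ indices $j^*_1<\cdots<j^*_s$ in $\{0,\dots,t\}$ with $\psi_{i_{j^*_k}}$-values pairwise $\sigma$-separated.

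The main obstacle is to then ensure that $\Omega(\sqrt n)$ of the selected $j^*_k$'s actually lie in $\mathcal{J}$. Since $\mathcal{J}$ has density $\ge 0.1$ in $\{1,\dots,t\}$, one approach is to modify the \cref{lem:well-separated} procedure so that the ``first entry'' into each sub-interval is restricted to $j\in\mathcal{J}$, at the cost of skipping sub-intervals where no $\mathcal{J}$-index's $\psi$-value lies; a careful bookkeeping (perhaps combined with an argument in the spirit of \cref{lem:lonely}, ensuring that most sub-intervals do contain a $\mathcal{J}$-index by positive density) should yield the desired $\Omega(\sqrt n)$-sized $\mathcal{J}'\subseteq\mathcal{J}$. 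A sufficiently small choice of $c'$ then absorbs the various constants.
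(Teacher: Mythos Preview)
Your first step---localising every element of $\Psi_{i_j}$ in an interval of length $O(c'n)$ centred at $C+\psi_{i_j}$ with $\psi_i=e_i+h_i$---is correct and is essentially the computation the paper carries out in its final inequality chain.

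The gap is exactly where you flag it, and your proposed patch does not close it. Applying \cref{lem:well-separated} directly to $(\psi_{i_j})_j$ cannot yield a subsequence of size close to $t$: even if one fixes the threshold (the bound inherited from the interval structure of the first application is $2\rho_0+\sigma_0=7n$, not $4n$), the increments $A_j=e_{i_j}-e_{i_{j-1}}$ may be distributed so that a constant fraction exceed any $\rho'$ you choose below $7n$, while for $\rho'\ge 7n$ the resulting subsequence has size at most $\lambda/\rho'\le(4/7)t$. Either way you cannot guarantee a subsequence of size $>(0.9)t$, which is what you need to survive intersection with $\mathcal{J}$ of size only $(0.1)t$. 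Density of $\mathcal{J}$ among \emph{indices} $j$ tells you nothing about how the values $\psi_{i_j}$ for $j\in\mathcal{J}$ distribute among sub-intervals, and \cref{lem:lonely} is not relevant here.

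The paper's device is to apply \cref{lem:well-separated} not to $(\psi_{i_j})_j$ but to the surrogate sequence
\[
\Sigma_j \;=\; n(j-1)\;-\;\sum_{i\le i_j}|g_i|.
\]
The point is that $\Sigma_j-\Sigma_{j-1}=n-\sum_{i_{j-1}<i\le i_j}|g_i|\le n$ \emph{by construction}, so $\kappa=0$, and since $\Sigma_t\ge(1-O(\sqrt{c'}))tn$ by \cref{claim:gaps-concentration}(ii), one obtains a well-separated subsequence of size $\ge(0.95)t$. Deleting the at most $(0.9)t$ indices outside $\mathcal{J}$ then leaves $\ge(0.05)t=\Omega(\sqrt n)$. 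Finally, one checks that $\Sigma_{j_q}-\Sigma_{j_{q-1}}\ge(0.01)n$ forces $\min\Psi_{i_{j_q}}>\max\Psi_{i_{j_{q-1}}}$, using your first-step expansion together with $e_{i_{j_q}}-e_{i_{j_{q-1}}}\ge(j_q-j_{q-1})n$ and $|h_{i_{j_q}}-h_{i_{j_{q-1}}}|\le\sum_{i_{j_{q-1}}<i\le i_{j_q}}|g_i|$.
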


\begin{proof}
For $1\le j<t$, let $\Sigma_{j}=n\left(j-1\right)-\sum_{i=1}^{i_{j}}\left|g_{i}\right|$,
so that $\Sigma_{1}=0$ and $\Sigma_{t}\ge\left(1-O\left(\sqrt{c'}\right)\right)tn$,
by  (ii) in \cref{claim:gaps-concentration}. The significance of these quantities is that we have established the separation $e_{i_j}-e_{i_{j-1}}\ge n$, but this may be offset by the buildup of deviations $|g_i|$. That is, each increment $\Sigma_{j}-\Sigma_{j-1}=n-\sum_{i=i_{j-1}+1}^{i_{j}}\left|g_{i}\right|$ is a lower bound on the separation between $e\left(W_{i_{j-1}}\right)+e\left(U,W_{i_{j-1}}\right)+ \n Zd_{i_{j-1}}$ and $e\left(W_{i_j}\right)+e\left(U,W_{i_j}\right)+\n Zd_{i_j}$, which approximates the separation between the values in $\Psi_{i_{j-1}}$ and the values in $\Psi_{i_j}$.

By \cref{lem:well-separated} (with $\tau=t=\Omega(n)$, $\lambda=\Sigma_t=\left(1-O\left(c'\right)\right)tn$, $\sigma=(0.01)t$, $\rho=n$ and $\kappa=0$), for small
$c'$ we can find an increasing sequence $j_{1}^{0},\dots,j_{s^{0}}^{0}$,
for $s^{0}\ge\left(\left(1-O\left(c'\right)\right)/1.01\right)t\ge\left(0.95\right)t$,
such that $\Sigma_{j_{q}^{0}}-\Sigma_{j_{q-1}^{0}}\ge\left(0.01\right)n$
for each $1<q\le s^{0}$. By (i) in \cref{claim:gaps-concentration},
deleting the indices not in $\mathcal{J}$ gives an increasing sequence
$j_{1},\dots,j_{s}$, with $s\ge\left(0.05\right)t$, also satisfying
$\Sigma_{j_{q}}-\Sigma_{j_{q-1}}\ge\left(0.01\right)n$ for each $1<q\le s$.

To avoid too many layered subscripts, for $1\le q\le s$ define $W_{q}'=W_{i_{j_{q}}}$, $d_{q}'=d_{i_{j_{q}}}$,
$M_{q}'=M_{i_{j_{q}}}$, $e_q'=e_{i_{j_q}}$, $i_{q}'=i_{j_{q}}$. Also, for $1<q\le s$
define $\Gamma_{q}=\sum_{i=i_{q-1}'+1}^{i_{q}'}\left|g_{i}\right|$.

Our goal is now to show that quantities of the form $e\left(W_{q}'\cup U\cup V_{Z}\right)$ arising from the definition of $|\Psi_{i_{j_q}}|$ are well-separated for different $q$. This will basically follow from the fact that $\Sigma_{j_{q}}-\Sigma_{j_{q-1}}=\Omega\left(n\right)$, our control over the $d_{W_{i}}\left(\boldsymbol{v}\right)$ for $\boldsymbol{v}\in M_{i}$, and the definition of the $\Psi_{i'_q}$.

First, for each $1<q\le s$ observe that
\begin{align*}
e\left(W_{q}',U\right)-e\left(W_{q-1}',U\right) & =\sum_{i=i_{q-1}'+1}^{i_{q}'}\left(d_{U}\left(
\vphantom{w_{\n W-i+1}^{-}}
w_{i}^{+}\right)-d_{U}\left(w_{\n W-i+1}^{-}\right)\right)\\
 & \ge\alpha\sum_{i=i_{q-1}'+1}^{i_{q}'}\left(d_{U^{0}}\left(
\vphantom{w_{\n W-i+1}^{-}}
w_{i}^{+}\right)-d_{U^{0}}\left(w_{\n W-i+1}^{-}\right)\right)-\Gamma_{q}
\\
 & =\alpha e\left(W_{q}',U^{0}\right)-\alpha e\left(W_{q-1}',U^{0}\right)-\Gamma_{q}.
\end{align*}
Next, for $Z\subseteq M_{q}'$ and $Z'\subseteq M_{q-1}'$ satisfying $\left|Z\right|=\left|Z'\right|=\n Z$
and $$\left|e\left(U,V_{Z}\right)-\alpha\n Zd_{U^{0}}\right|,\,\left|e\left(U,V_{Z'}\right)-\alpha\n Zd_{U^{0}}\right|\le B\n D=O\left(\n D\right),$$ we also have
\begin{align*}
 & e\left(W_{q}'\cup U,V_{Z}\right)-e\left(W_{q-1}'\cup U,V_{Z'}\right)+e\left(V_{Z}\right)-e\left(V_{Z'}\right)\\
 & \qquad=\sum_{\boldsymbol{v}\in Z}d_{W_{q}'}\left(\boldsymbol{v}\right)-\sum_{\boldsymbol{v}\in Z'}d_{W_{q-1}'}\left(\boldsymbol{v}\right)+O\left(\n D\right)\\
 & \qquad=\sum_{\boldsymbol{v}\in Z}\left(d_{q}'+o\left(\sqrt{\n D}\right)\right)-\sum_{\boldsymbol{v}\in Z'}\left(d_{q-1}'+o\left(\sqrt{\n D}\right)\right)+O\left(\n D\right)\\
 & \qquad=\n Zd_{q}'-\n Zd_{q-1}'+O\left(\n D\right).
\end{align*}
Recall that $\Sigma_{j_{q}}-\Sigma_{j_{q-1}}=\Omega\left(n\right)$ and $\n D\le 2c'n$. For small $c'$, it follows that
\begin{align*}
 & e\left(W_{q}'\cup U\cup V_{Z}\right)-e\left(W_{q-1}'\cup U\cup V_{Z'}\right)\\
 & \qquad=e\left(W_{q}'\right)-e\left(W_{q-1}'\right)+e\left(W_{q}',U\right)-e\left(W_{q-1}',U\right)\\
 & \qquad\quad\qquad+e\left(W_{q}'\cup U,V_{Z}\right)-e\left(W_{q-1}'\cup U,V_{Z'}\right)+e\left(V_{Z}\right)-e\left(V_{Z'}\right)\\
 & \qquad\ge\left(e\left(W_{q}'\right)+\alpha e\left(W_{q}',U^{0}\right)+\n Zd_{q}'\right)-\left(e\left(W_{q-1}'\right)+\alpha e\left(W_{q-1}',U^{0}\right)+\n Zd_{q-1}'\right)-\Gamma_{q}-O\left(\n D\right)\\
& \qquad= e_q'-e_{q-1}'-\Gamma_{q}-O\left(\n D\right)\\
 & \qquad\ge\left(j_{q}-j_{q-1}\right)n-\Gamma_{q}-O\left(\n D\right)
=\Sigma_{j_{q}}-\Sigma_{j_{q-1}}-O\left(\n D\right)=\Omega\left(n\right)>0.
\end{align*}
We conclude that the minimum value in $\Psi_{i'_{q}}$ is greater
than the maximum value in $\Psi_{i'_{q-1}}$. Since this is true
for all $1<q\le s$, it follows that each $\Psi_{i_{j_{q}}}$ is disjoint,
so we may take $\mathcal{J}'=\left\{ j_{1},\dots,j_{s}\right\} $.
\end{proof}
Finally, let $f=c'n$ and $h=\n W+k \n Z=cn+\delta\sqrt{c'n}$. For $1\le i\le\n W$
observe that if $Z\subseteq M_{i}$ satisfies $\left|Z\right|=\n Z$
then $W_{i}\cup U\cup V_{Z}$ has exactly $\left|U^{0}\right|-\n D+h$
vertices, and this number is equal to $(\ell-f)+h$ or $2(\ell-f)+h$. Therefore, for
$\ell'=(\ell-f)+h$ or $\ell'=2(\ell-f)+h$, we have
\[
\left|\Psi\left(\ell',G\right)\right|\ge\sum_{j\in\mathcal{J}'}\left|\Psi_{i_{j}}\right|=\Omega\left(c'n^{3/2}\right).\qedhere
\]
\end{proof}

\subsection{\label{sec:step-2}Proof of \texorpdfstring{\cref{lem:step-2}}{Lemma~\ref{lem:step-2}}}

As outlined in \cref{sec:discussion}, we will first construct $W^{-}$,
$W^{+}$ and $U^{0}$ satisfying properties 1 and 2, and we will then use
richness (\cref{lem:rich}) and the sunflower lemma (\cref{lem:sunflower})
to construct $M$ satisfying properties 3 and 4. We remark that it
would be possible to use an existing discrepancy theorem (for example,
a theorem in~\cite{EGPS88}, as mentioned in \cref{sec:discussion})
to construct sets $W^{-}$, $W^{+}$ and $U^{0}$ satisfying property
2, using only the fact that $G$ has density bounded away from 0 and
1. However, since we are already using \cref{lem:rich} for property
4, it is convenient to instead use richness and anticoncentration.

So, consider $\varepsilon=\varepsilon\left(C\right)$ from \cref{lem:rich},
note that we can assume $\varepsilon<1/8$, and let $\delta=\varepsilon^{K}$
for some large absolute constant $K$ which we will determine
later. Let $G\left[V'\right]$ be a $\left(\delta,\varepsilon\right)$-rich
induced subgraph of $G$, with $n':=\left|V'\right|\ge15cn$ vertices, which
exists for small $c>0$ by \cref{lem:rich}. We will only work inside
$V'$, so all degrees and neighbourhoods should be interpreted as
being restricted to $V'$.

First, let $U^{1}$ be a uniformly random subset of $V'$ with size
$2\ell\le4cn$. Let $H\subseteq\binom{V'}{2}$ be the auxiliary graph with an
edge $\left\{ x,y\right\} \in\binom{V'}{2}$ whenever $d_{U^{1}}\left(x\right)=d_{U^{1}}\left(y\right)$.
We show that with positive probability, the diversity of neighbourhoods
in $G\left[V'\right]$ is maintained for neighbourhoods in $U^{1}$,
and simultaneously $H$ is quite sparse, which implies that there
is a lot of variation between degrees into $U^{1}$ (this will be
the starting point from which we obtain our discrepancy for property
2).
\begin{claim}
\label{claim:diversity-anticoncentration}The following hold together
with positive probability.
\begin{enumerate}
\item [(i)]For each $k\le K$ and $\boldsymbol{x},\boldsymbol{y}\in\binom{V}{k}$
with $\left|N\left(\boldsymbol{x}\right)\triangle N\left(\boldsymbol{y}\right)\right|\ge\varepsilon^{K}n'$,
we have $\left|N_{U^{1}}\left(\boldsymbol{x}\right)\triangle N_{U^{1}}\left(\boldsymbol{y}\right)\right|\ge\varepsilon^{K}\ell$;
\item [(ii)]there is a set $W$ of at least $7cn$ vertices such that $d_{H}\left(x\right)=O\left(\sqrt{n}\right)$
for each $x\in W$.
\end{enumerate}
\end{claim}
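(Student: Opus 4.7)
The plan is to establish (i) and (ii) separately, each with positive probability, and then combine them via the union bound. Part (i) will follow from hypergeometric concentration, and part (ii) from the anticoncentration estimate in \cref{lem:anticoncentration} combined with the diversity provided by \cref{lem:diversity-tuples}.

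For (i), I would fix any pair $\boldsymbol{x},\boldsymbol{y}\in\binom{V'}{k}$ with $k\le K$ and $|N(\boldsymbol{x})\triangle N(\boldsymbol{y})|\ge\varepsilon^{K}n'$, and set $S=N(\boldsymbol{x})\triangle N(\boldsymbol{y})$. Then $|N_{U^{1}}(\boldsymbol{x})\triangle N_{U^{1}}(\boldsymbol{y})|=|U^{1}\cap S|$ is of $(n',2\ell/n',1)$-hypergeometric type with mean at least $2\varepsilon^{K}\ell$, so \cref{lem:concentration} (noting that $2\ell/n'$ is a constant bounded away from $0$ and $1$) shows that this quantity is below $\varepsilon^{K}\ell$ with probability $\exp(-\Omega(n))$. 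A union bound over the $O(n^{2K})$ pairs of size at most $K$ then gives (i) with probability $1-o(1)$.

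For (ii), I would show that the expected degree of a ``typical'' vertex in the auxiliary graph $H$ is $O(\sqrt{n})$, so that $H$ is sparse on most vertices. Calling a vertex $x\in V'$ \emph{typical} if $\varepsilon n'\le|N(x)|\le(1-\varepsilon)n'$, the $(\delta,\varepsilon)$-richness of $G[V']$ (applied with $W=V'$) ensures that all but at most $n^{\delta}$ vertices are typical. For each typical $x$, since $|N(x)|\ge\delta n'$, \cref{lem:diversity-tuples} with $k=1$ gives that fewer than $n^{\delta}$ vertices $y$ satisfy $|N(x)\triangle N(y)|<\delta\varepsilon n'$. For every other $y$, the random variable $d_{U^{1}}(x)-d_{U^{1}}(y)=\sum_{v\in V'}(\one[v\in N(x)]-\one[v\in N(y)])\one[v\in U^{1}]$ is of $(n',2\ell/n',1,\Omega(n'))^{*}$-hypergeometric type, so \cref{lem:anticoncentration} gives $\Pr(d_{U^{1}}(x)=d_{U^{1}}(y))=O(1/\sqrt{n})$. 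Hence $\E[d_{H}(x)]\le n^{\delta}+n'\cdot O(1/\sqrt{n})=O(\sqrt{n})$ for typical $x$, so $\E\sum_{x\text{ typical}}d_{H}(x)=O(n^{3/2})$. By Markov's inequality this sum is $O(n^{3/2})$ with probability at least $1/2$; in that event, for a sufficiently large constant $B$, fewer than $cn/10$ typical vertices can have $d_{H}(x)>B\sqrt{n}$, so the remaining typical vertices form a set $W$ of size at least $n'-n^{\delta}-cn/10\ge 7cn$, using $n'\ge 15cn$.

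Since (i) holds with probability $1-o(1)$ and (ii) with probability at least $1/2$, a union bound yields both simultaneously with positive probability. The main technical point requiring care is verifying the $*$-hypergeometric hypothesis of \cref{lem:anticoncentration} in (ii): the coefficients in $d_{U^{1}}(x)-d_{U^{1}}(y)$ take values in $\{-1,0,1\}$, and the number of non-zero coefficients is exactly $|N(x)\triangle N(y)|$, which is $\Omega(n')$ precisely for the pairs left over after discarding the at most $n^{\delta}$ ``non-diverse'' $y$ guaranteed by \cref{lem:diversity-tuples}. This is how richness of $G[V']$ translates directly into sparsity of $H$.
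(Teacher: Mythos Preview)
Your proposal is correct and follows essentially the same approach as the paper: hypergeometric concentration plus a union bound for (i), and richness $\to$ diversity (\cref{lem:diversity-tuples} with $k=1$) $\to$ anticoncentration (\cref{lem:anticoncentration}) $\to$ sparsity of $H$ for (ii). The only cosmetic difference is that the paper applies Markov's inequality per vertex (each $d_H(x)=O(\sqrt{n})$ with probability $\ge 7/8$) and then again on the count of bad vertices, whereas you apply Markov once to the total $\sum_{x\text{ typical}} d_H(x)$ and then read off the number of vertices with large degree; both routes are equally valid.
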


\begin{proof}
We will show that (i) and (ii) each hold with probability greater
than $1/2$. The proofs will be quite routine, using the concentration and anticoncentration theorems in \cref{subsec:prob-tools}.

For (i), observe that for each $\boldsymbol{x},\boldsymbol{y}\in\binom{V}{k}$,
$\left|N_{U^{1}}\left(\boldsymbol{x}\right)\triangle N_{U^{1}}\left(\boldsymbol{y}\right)\right|=\left|\left(N\left(\boldsymbol{x}\right)\triangle N\left(\boldsymbol{y}\right)\right)\cap U^{1}\right|$
is of $\left(n',2\ell/n',1\right)$-hypergeometric type, and apply
\cref{lem:concentration} and the union bound. (Recall from \cref{subsec:definitions}
the nonstandard multiset definition of $A\triangle B$).

For (ii), note that each $d_{U^{1}}\left(x\right)-d_{U^{1}}\left(y\right)$
is of $\left(n',2\ell/n',1,\left|N\left(x\right)\triangle N\left(y\right)\right|\right)^{*}$-hypergeometric
type, so if $\left|N\left(x\right)\triangle N\left(y\right)\right|=\Omega\left(n\right)$
then by \cref{lem:anticoncentration}, $\Pr\left(d_{U^{1}}\left(x\right)=d_{U^{1}}\left(y\right)\right)=O\left(1/\sqrt{n}\right)$.
By \cref{lem:few-bad} (taking $k=1$), there are at most $n^{1/5}$
vertices $x\in V'$ with $N\left(x\right)<\varepsilon n'$, and by
\cref{lem:diversity-tuples}, for every other vertex $x\in V'$ there
are at most $n^{1/5}$
vertices $y\ne x$ with $\left|N\left(x\right)\triangle N\left(y\right)\right|<\varepsilon^{2}n'$.
For each $x\in V'$ of the latter type, we have $\E d_{H}\left(x\right)=O\left(n^{1/5}+\sqrt{n}\right)=O\left(\sqrt{n}\right)$,
so by Markov's inequality, $d_{H}\left(x\right)=O\left(\sqrt{n}\right)$
(for a sufficiently large constant implied by the big-oh notation)
with probability at least $7/8$. Let $W$ be the set of all $x\in V'$
for which this holds, so that $\E\left|V'\setminus W\right|\le n'/8+n^{1/5}<n'/4$.
Therefore, $\left|W\right|\ge n'/2\ge7cn$ with probability greater
than $1/2$.
\end{proof}
Fix an outcome of $U^{1}$ satisfying both the properties in the above
claim, and note that $\left|W\setminus U^{1}\right|\ge3cn$. Order
the vertices $x\in W\setminus U^{1}$ by their values of $d_{U^{1}}\left(x\right)$
(breaking ties arbitrarily), let $W^{1}$ contain the first $cn$
vertices in this ordering and let $W^{2}$ contain the last $cn$.
By (ii) in \cref{claim:diversity-anticoncentration}, for the (at least
$cn$) vertices $x$ between $W^{1}$ and $W^{2}$ in this ordering, we have $d_{H}\left(x\right)=O\left(\sqrt{n}\right)$,
so there are at least $\Omega\left(\sqrt{n}\right)$ values of $d_{U^{1}}\left(x\right)$, and
\[
\min_{x\in W^{2}}d_{U^{1}}\left(x\right)-\max_{x\in W^{1}}d_{U^{1}}\left(x\right)=\Omega\left(\sqrt{n}\right).
\]
Recalling that $\alpha\ge 1/2$, this implies that
\[
\alpha e\left(W^{2},U^{1}\right)-\alpha e\left(W^{1},U^{1}\right)=\Omega\left(n^{3/2}\right).
\]
Now, if 
\[
\left(e\left(W^{2}\right)+\alpha e\left(W^{2},U^{1}\right)\right)-\left(e\left(W^{1}\right)+\alpha e\left(W^{1},U^{1}\right)\right)\ge\left(\alpha e\left(W^{2},U^{1}\right)-\alpha e\left(W^{1},U^{1}\right)\right)/4
\]
then let $W^{-}=W^{1}$ and $W^{+}=W^{2}$ and $U^{0}=U^{1}$; property
2 is satisfied. Otherwise, there must be a large discrepancy between
$e\left(W^{1}\right)$ and $e\left(W^{2}\right)$. To be specific,
we must have
\begin{equation}
\left(e\left(W^{1}\right)+\alpha e\left(W^{1},U^{1}\right)/2\right)-\left(e\left(W^{2}\right)+\alpha e\left(W^{2},U^{1}\right)/2\right)\ge\left(\alpha e\left(W^{2},U^{1}\right)-\alpha e\left(W^{1},U^{1}\right)\right)/4.\label{eq:discrepancy}
\end{equation}
In this case, let $U^{0}$ be a random subset of $\ell=\left|U^{0}\right|/2$
elements of $U^{1}$, let $W^{-}=W^{2}$ and let $W^{+}=W^{1}$. Then
\begin{align*}
&\left(e\left(W^{+}\right)+\alpha e\left(W^{+},U^{0}\right)\right)-\left(e\left(W^{-}\right)+\alpha e\left(W^{-},U^{0}\right)\right)\\
&\qquad= \left(e\left(W^{+}\right)-e\left(W^{-}\right)\right)+\alpha \sum_{u\in U^0} \left(d_{W^+}(u)-d_{W^+}(u)\right)
\end{align*}
is of $\left(1/2\right)$-hypergeometric type and has mean $\Omega\left(n^{3/2}\right)$,
given by \cref{eq:discrepancy}. So, by \cref{lem:symmetric}, this
random value is $\Omega\left(n^{3/2}\right)$ with probability at
least $1/2$. Also, for each $k\le K$ and $\boldsymbol{x},\boldsymbol{y}\in\binom{V}{k}$ with
$\left|N\left(\boldsymbol{x}\right)\triangle N\left(\boldsymbol{y}\right)\right|\ge\varepsilon^{K}n'$, the random variable
$\left|N_{U^{0}}\left(\boldsymbol{x}\right)\triangle N_{U^{0}}\left(\boldsymbol{y}\right)\right|$
is of $\left(\Omega(n),1/2,1\right)$-hypergeometric type with mean $\Omega(n)$, so by
\cref{lem:concentration} and the union bound, with probability $1-o(1)$
we have $\left|N_{U^{0}}\left(\boldsymbol{x}\right)\triangle N_{U^{0}}\left(\boldsymbol{y}\right)\right|=\Omega\left(n\right)$ for all such $k,\boldsymbol x,\boldsymbol y$. So, we can fix an outcome of $U^{0}$ satisfying both of these properties.

In either of the above two cases, property 2 is satisfied and $\left|N_{U^{0}}\left(\boldsymbol{x}\right)\triangle N_{U^{0}}\left(\boldsymbol{y}\right)\right|=\Omega\left(n\right)$
for each $\boldsymbol{x},\boldsymbol{y}\in\binom{V}{k}$ with $\left|N\left(\boldsymbol{x}\right)\triangle N\left(\boldsymbol{y}\right)\right|\ge\varepsilon^{K}n'$. We also have $|U^0|=\ell$ or $|U^0|=2\ell$, satisfying property 1.
Now, fix some $\Omega\left(n\right)$-vertex subset $A^{0}$ disjoint
from $U^{1}$ and $W$, and let $M^{0}\subseteq\binom{A^{0}}{K}$
contain every $\boldsymbol{v}\in\binom{A^{0}}{K}$ with $\left|\bigcap_{v\in\boldsymbol{v}}N\left(v\right)\right|\ge\varepsilon^{K}n'$.
By \cref{lem:few-bad}, we have $\left|M^{0}\right|=\Omega\left(n^{K}\right)$.

Observe that there are only $\left(kn+1\right)^{3}$ possible values
of the tuples $\left(d_{W^{+}}\left(\boldsymbol{x}\right),d_{W^{+}}\left(\boldsymbol{x}\right),d_{U^{0}}\left(\boldsymbol{x}\right)\right)$,
so by the pigeonhole principle there are $d_{W^{-}}',d_{W^{+}}',d_{U^{0}}'\in\NN$,
and a collection $M^{1}\subseteq M^{0}$ of size $\Omega\left(n^{K-3}\right)$,
such that for each $\boldsymbol{x}\in M^{1}$ we have $d_{W^{-}}\left(\boldsymbol{x}\right)=d_{W^{-}}'$,
$d_{W^{+}}\left(\boldsymbol{x}\right)=d_{W^{+}}'$ and $d_{U^{0}}\left(\boldsymbol{x}\right)=d_{U^{0}}'$.
For sufficiently large $K$, by \cref{lem:sunflower}, $M^{1}$ has
a sunflower with $\Omega\left(n^{\left(K-3\right)/K}\right)=\Omega\left(n^{3/4+1/5}\right)$
petals; take $M^{2}$ as this set of petals, and let $k$ be the common
size of these petals. Let $\boldsymbol{v}$ be the kernel of the sunflower,
and let $d_{W^{-}}=d_{W^{-}}'-d_{W^{-}}\left(\boldsymbol{v}\right)$,
$d_{W^{+}}=d_{W^{+}}'-d_{W^{+}}\left(\boldsymbol{v}\right)$ and $d_{U^{0}}=d_{U^{0}}'-d_{U^{0}}\left(\boldsymbol{v}\right)$,
so for $\boldsymbol{x}\in M^{2}$ we have $d_{W^{-}}\left(\boldsymbol{x}\right)=d_{W^{-}}$,
$d_{W^{+}}\left(\boldsymbol{x}\right)=d_{W^{+}}$ and $d_{U^{0}}\left(\boldsymbol{x}\right)=d_{U^{0}}$.

Finally, consider the auxiliary graph $F\subseteq\binom{M^{2}}{2}$ which has an
edge $\left\{ \boldsymbol{x},\boldsymbol{y}\right\} \in\binom{M^{2}}{2}$
whenever $\left|N\left(\boldsymbol{x}\right)\triangle N\left(\boldsymbol{y}\right)\right|<\varepsilon^{K}n'$.
By \cref{lem:diversity-tuples}, the degrees in $F$ are at most $n^{1/5}$
so by \cref{prop:turan} (Tur\'an's theorem) there is $M\subseteq M^{2}$ with $\left|M\right|=\Omega\left(n^{3/4}\right)$
such that $\left|N\left(\boldsymbol{x}\right)\triangle N\left(\boldsymbol{y}\right)\right|\ge\varepsilon^{K}n'$,
and therefore $\left|N_{U^{0}}\left(\boldsymbol{x}\right)\triangle N_{U^{0}}\left(\boldsymbol{y}\right)\right|=\Omega\left(n\right)$,
for all pairs $\left\{ \boldsymbol{x},\boldsymbol{y}\right\} \in\binom{M}{2}$.

\subsection{Proof of \texorpdfstring{\cref{lem:sub-random}}{Lemma~\ref{lem:sub-random}}}\label{sec:sub-random}

As in the deduction of \cref{conj:EFS} in \cref{sec:EFS-proof}, for
a collection $Z$ of vertex sets let $V_{Z}=\bigcup_{\boldsymbol{z}\in Z}\boldsymbol{z}$.

Our proof of \cref{lem:sub-random} will be quite similar to the proof
of the main theorem in~\cite{KS}. Roughly speaking, we will first
expose a random superset $D^{1}$ of $D$ (we may view this as ``partially
exposing'' the random subset $D$). Using this randomness for anticoncentration,
we will construct sub-matchings $S^{-},S^{+}\subseteq M$ of size
$\Omega\left(\sqrt{\n D}\right)$, such that all the degrees from
elements of $S^{+}$ into $D^{1}$ are higher by $\sqrt{\n D}$
than the degrees from $S^{-}$ into $D^{1}$. Starting with any $S_{0}\subseteq S^{+}$
of some size $\n Z-1$, we can therefore obtain $\n Z$ subsets $S_{0},\dots,S_{\n Z-1}$
such that the values $e\left(W\cup U^{0}\cup V_{S_{i}}\right)$ are
separated by a distance of $\Omega\left(\sqrt{\n D}\right)$, simply
by switching elements of $M$ one-by-one from $S^{+}$ into $S^{-}$. Then,
we fully expose the random set $D$ (therefore exposing $U=U^0\setminus D$), and show that the values $e\left(W\cup U\cup V_{S_{i}}\right)$
remain fairly well-separated. We use this further randomness, and
anticoncentration, to show that for most $i$, there is a set $X_{i}$
of $\sqrt{\n D}$ elements of $M$ which have different degrees into
$W\cup U\cup V_{S_{i}}$, still concentrated in a known interval of
length $O\left(\sqrt{\n D}\right)$. This will prove that there are
$\Omega\left(\n Z\sqrt{\n D}\right)$ values $e\left(W\cup U\cup V_{S_{i}}\cup\boldsymbol{z}\right)$,
for $\boldsymbol{x}\in X_{i}$. (So, our sets $Z$ in the lemma statement are of the form $S_{i}\cup\{\boldsymbol x\}$, for $\boldsymbol x\in X_i$).
The additional requirement that there are about the expected number of edges between $U$ and $Z$ will follow from our proof basically for free.

We now proceed with this plan to prove \cref{lem:sub-random}. Arbitrarily
split $M$ into two subsets $S^{0}$ and $X^{0}$ each of size $\Omega\left(\sqrt{\n D}\right)$.
Let $D^{1}$ be a uniformly random subset of $U^{0}$ of size $2\n D$, so that we may realise the desired distribution of $D$ as a uniformly random subset of $D^{1}$ of size $\n D$. We
will first observe some regularity and discrepancy properties that
hold with probability at least $3/4$ with respect to the random choice
of $D^{1}$. Let $H\subseteq\binom{S^{0}}{2}$ be the auxiliary random graph
(depending on $D^{1}$) with an edge $\left\{ \boldsymbol{x},\boldsymbol{y}\right\} \in\binom{S^{0}}{2}$ if $d_{D^1}(\boldsymbol x)=d_{D^1}(\boldsymbol y)$. Also, let $d_{D}=(1-\alpha) d_{U^{0}}$, recalling from the statement of \cref{lem:sub-random} that $1-\alpha=\n D/\left|U^{0}\right|$.
\begin{claim}
\label{claim:X-S-split}The following hold together with probability
at least $3/4$.
\begin{enumerate}
\item [(i)]$\left|N_{D^{1}}\left(\boldsymbol{x}\right)\triangle N_{D^{1}}\left(\boldsymbol{y}\right)\right|=\Omega\left(\n D\right)$
for each $\left\{ \boldsymbol{x},\boldsymbol{y}\right\} \in\binom{X^0}{2}$;
\item [(ii)]there are $X\subseteq X^{0}$ and $S^{1}\subseteq S^{0}$,
each with size $\Omega\left(\sqrt{\n D}\right)$, such that $d_{D^{1}}\left(\boldsymbol{x}\right)=2d_{D}+O\left(\sqrt{\n D}\right)$
for each $\boldsymbol{x}\in X\cup S^1$;
\item [(iii)]$H$ has $O\left(\sqrt{\n D}\right)$ edges.
\end{enumerate}
\end{claim}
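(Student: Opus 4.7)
The plan is to verify that each of the three events individually holds with probability at least $11/12$, and then to conclude by a union bound that all three hold simultaneously with probability at least $3/4$. All three arguments rest on writing the relevant quantities as sums indexed by $u\in U^0$ and weighted by the indicator $\one[u\in D^1]$, so that the tools of \cref{subsec:prob-tools} apply directly. In particular, since $D^1$ is a uniformly random subset of $U^0$ of size $2\n D$, for every $\boldsymbol v\in M$,
\[
d_{D^1}(\boldsymbol v)=\sum_{u\in U^0} a_{\boldsymbol v}(u)\,\one[u\in D^1],\qquad a_{\boldsymbol v}(u):=|\{v\in\boldsymbol v:uv\in E(G)\}|\in\{0,\ldots,k\},
\]
is of $(|U^0|,\, 2\n D/|U^0|,\, k)$-hypergeometric type with expectation $(2\n D/|U^0|)\,d_{U^0}=2d_D$, using property 3 of \cref{lem:step-2}.

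For (i), the definition of multiset symmetric difference gives $|N_{D^1}(\boldsymbol x)\triangle N_{D^1}(\boldsymbol y)|=|(N_{U^0}(\boldsymbol x)\triangle N_{U^0}(\boldsymbol y))\cap D^1|$, which is of $(|U^0|,\, 2\n D/|U^0|,\, 1)$-hypergeometric type with expectation $\Omega(\n D)$ by the hypothesis that $|N_{U^0}(\boldsymbol x)\triangle N_{U^0}(\boldsymbol y)|=\Omega(|U^0|)$. \Cref{lem:concentration} with $t$ equal to half the expectation gives failure probability $e^{-\Omega(\n D)}$ per pair, and union-bounding over the $O(|X^0|^2)$ pairs yields $o(1)$ failure probability, since $\n D=\omega(\log n)$. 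For (ii), \cref{lem:concentration} with $t=B\sqrt{\n D}$ shows that each fixed $\boldsymbol v$ fails the bound $|d_{D^1}(\boldsymbol v)-2d_D|\le B\sqrt{\n D}$ with probability at most $e^{-\Omega(B^2)}$. Choosing $B$ a sufficiently large constant and applying Markov's inequality separately to the number of failures inside $X^0$ and inside $S^0$, with probability at least $11/12$ at least half of each set survives, giving $X$ and $S^1$ of the required sizes.

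The main technical input is (iii), which requires anticoncentration rather than concentration. The difference
\[
d_{D^1}(\boldsymbol x)-d_{D^1}(\boldsymbol y)=\sum_{u\in U^0}\bigl(a_{\boldsymbol x}(u)-a_{\boldsymbol y}(u)\bigr)\,\one[u\in D^1]
\]
has integer coefficients in $\{-k,\ldots,k\}$, and these coefficients are nonzero for precisely those $\Omega(|U^0|)$ indices $u$ at which the multiplicities in $N_{U^0}(\boldsymbol x)$ and $N_{U^0}(\boldsymbol y)$ differ. Thus this random variable is of $(|U^0|,\, 2\n D/|U^0|,\, k,\, \Omega(|U^0|))^*$-hypergeometric type with mean zero, and since $|U^0|\ge 3\n D$ forces $p(1-p)|U^0|=\Theta(\n D)$, \cref{lem:anticoncentration} applied at $x=0$ gives $\Pr[d_{D^1}(\boldsymbol x)=d_{D^1}(\boldsymbol y)]=O(1/\sqrt{\n D})$. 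Taking $S^0$ of size $\Theta(\sqrt{\n D})$ when it was split off from $M$, this bounds $\E[e(H)]=O(|S^0|^2/\sqrt{\n D})=O(\sqrt{\n D})$, and a final Markov bound yields $e(H)=O(\sqrt{\n D})$ with probability at least $11/12$. The union bound over the three events then closes out the claim.
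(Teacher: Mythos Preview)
Your proof is correct and follows essentially the same approach as the paper: concentration (\cref{lem:concentration}) plus a union bound for (i), concentration plus Markov for (ii), and anticoncentration (\cref{lem:anticoncentration}) plus Markov for (iii), with a final union bound over the three parts. One small quibble: since this claim sits inside the proof of \cref{lem:sub-random}, the fact $d_{U^0}(\boldsymbol v)=d_{U^0}$ should be cited as assumption~3 of \cref{lem:sub-random} rather than property~3 of \cref{lem:step-2} (the content is the same, so this is purely a labeling issue); also, your remark that $S^0$ may be taken of size $\Theta(\sqrt{\n D})$ is a useful clarification that the paper leaves implicit.
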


\begin{proof}
We will prove that each of (i)-(iii) individually hold with high probability, then apply the union bound. The proofs will be rather routine, using the concentration and anticoncentration theorems in \cref{subsec:prob-tools} in a similar way to the proof of \cref{claim:diversity-anticoncentration}.

For (i), observe that for each $\left\{ \boldsymbol{x},\boldsymbol{y}\right\} \in\binom{X^0}{2}$,
the random variable
$$\left|N_{D^{1}}\left(\boldsymbol{x}\right)\triangle N_{D^{1}}\left(\boldsymbol{y}\right)\right|=\left|N_{U^{0}}\left(\boldsymbol{x}\right)\triangle N_{U^{0}}\left(\boldsymbol{y}\right)\cap D^{1}\right|$$
is of $\left(\left|U^{0}\right|,2(1-\alpha),1\right)$-hypergeometric type
with mean $\Omega\left(\n D\right)$, so by the second assumption
of this lemma, \cref{lem:concentration} and the union bound, (i) holds
with probability $1-\left|X^0\right|^{2}e^{-\Omega\left(\n D\right)}=1-o\left(1\right)$.

We next show that (ii) holds with probability at least $7/8$. For
each $\boldsymbol{x}\in X^{0}$, the random variable $d_{D^{1}}\left(\boldsymbol{x}\right)$
is of $\left(\left|U^{0}\right|,2(1-\alpha),k\right)$-hypergeometric type,
so by \cref{lem:concentration} (with $t$ a large multiple of $\sqrt{\n D}$), with probability at least $31/32$
we have $d_{D^{1}}\left(\boldsymbol{x}\right)=\E d_{D^{1}}\left(\boldsymbol{x}\right)+O\left(\sqrt{\n D}\right)=2d_{D}+O\left(\sqrt{\n D}\right)$.
Therefore the expected number of $\boldsymbol{x}\in X^{0}$ failing
to satisfy this bound is at most $\left|X^{0}\right|/32$, and the
probability more than $\left|X^{0}\right|/2$ fail to satisfy this
bound is at most $1/16$. If this does not occur, we can find an appropriate
subset $X\subseteq X^{0}$ of size $\left|X^{0}\right|/2$. A very
similar argument shows that an appropriate subset $S^{1}\subseteq S^{0}$
with size $\left|S^{0}\right|/2$ exists with probability at least
$15/16$, and by the union bound we can simultaneously find suitable
$X,S^{1}$ with probability at least $7/8$.

Finally, we show that (iii) holds with probability at least $15/16$.
This will suffice to apply the union bound over parts (i)-(iii). Note that
the random variable
$d_{D^1}(\boldsymbol x)-d_{D^1}(\boldsymbol y)
$
is of $\left(|U^0|,2(1-\alpha),O\left(1\right),\left|N_{U^{0}}\left(\boldsymbol{x}\right)\triangle N_{U^{0}}\left(\boldsymbol{y}\right)\right|\right)^{*}$-hypergeometric
type. Recalling the second assumption
of this lemma that $\left|N_{U^{0}}\left(\boldsymbol{x}\right)\triangle N_{U^{0}}\left(\boldsymbol{y}\right)\right|=\Omega\left(\left|U^{0}\right|\right)$, we may apply \cref{lem:anticoncentration} to see that for any $\left\{ \boldsymbol{x},\boldsymbol{y}\right\} \in\binom{S^{0}}{2}$,
the probability $\left\{ \boldsymbol{x},\boldsymbol{y}\right\} $
is an edge in $H$ is $O\left(1/\sqrt{(1-\alpha) |U^0|}\right)=O\left(1/\sqrt{\n D}\right)$, and the expected
number of edges is $O\left(\sqrt{\n D}\right)$. The desired result
then follows from Markov's inequality.
\end{proof}
Condition on an outcome of $D^{1}$ satisfying all the above properties
(we will treat $D^{1}$ as fixed for the remainder of the proof). By \cref{prop:turan}, the graph $H$ (which has $|S^0|=\Omega(\sqrt{\n D})$ vertices) has an independent set $S^{2}$ of size $\Omega\left(\sqrt{\n D}\right)$, meaning that the values of $d_{D^1}\left(\boldsymbol{x}\right)$,
for $\boldsymbol{x}\in S^{2}$, are all different. Now, let $\n S=\n Z-1$, and note that for small $\delta$ we
have $\n S<\delta\sqrt{\n D}\le\left|S^{2}\right|/3$. Order the vertices $\boldsymbol{x}\in S^{2}$
by their values of $d_{D^1}\left(\boldsymbol{x}\right)$, let
$S^{-}$ contain the first $\n S$ elements of this ordering and let
$S^{+}$ contain the last $\n S$ elements. By construction,
we have
\begin{equation}
\min_{\boldsymbol{x}\in S^{+}}d_{D^1}\left(\boldsymbol{x}\right)-\max_{\boldsymbol{x}\in S^{-}}d_{D^1}\left(\boldsymbol{x}\right)=\Theta\left(\sqrt{\n D}\right).\label{eq:expected-degree-difference}
\end{equation}
(Here and from now on, the constants implied by all asymptotic notation
are independent of $\delta$).

Now, fix orderings $\boldsymbol{v}_{1}^{-},\dots,\boldsymbol{v}_{\n S}^{-}$
of $S^{-}$ and $\boldsymbol{v}_{1}^{+},\dots,\boldsymbol{v}_{\n S}^{+}$
of $S^{+}$. For $0\le i\le\n S$, define
\[
S_{i}=\left\{ \vphantom{\boldsymbol{v}_{\n S-i}^{+},} \boldsymbol{v}_{1}^{-},\dots,\boldsymbol{v}_{i}^{-}\right\} \cup\left\{ \boldsymbol{v}_{1}^{+},\dots,\boldsymbol{v}_{\n S-i}^{+}\right\} ,
\]
let $U_{i}=W\cup U\cup V_{S_{i}}$, and let $e_{i}=e\left(V_{S_{i}}\right)+e\left(V_{S_{i}},W\cup U\right)=e\left(U_{i}\right)-e\left(W\cup U\right)$.
For $0<i\le\n S$ define
\begin{align}
\Delta_{i}&=e_{i}-e_{i-1}\notag\\
& = e\left(V_{S_{i}},W\cup U\right)-e\left(V_{S_{i-1}},W\cup U\right)+e\left(V_{S_{i}}\right)-e\left(V_{S_{i-1}}\right)\notag\\
 &=d_{W\cup U}\left(
\vphantom{\boldsymbol{v}_{\n S-i+1}^{+}}
\boldsymbol{v}_{i}^{-}\right)-d_{W\cup U}\left(\boldsymbol{v}_{\n S-i+1}^{+}\right)
+e\left(V_{S_{i}}\right)-e\left(V_{S_{i-1}}\right)\notag\\
 &=\left(
d_{W\cup U^{0}}\left(
\vphantom{\boldsymbol{v}_{\n S-i+1}^{+}}
\boldsymbol{v}_{i}^{-}\right)-d_{D}\left(
\vphantom{\boldsymbol{v}_{\n S-i+1}^{+}}
\boldsymbol{v}_{i}^{-}\right)\right)-\left(d_{W\cup U^{0}}\left(\boldsymbol{v}_{\n S-i+1}^{+}\right)-d_{D}\left(\boldsymbol{v}_{\n S-i+1}^{+}\right)\right)
+e\left(V_{S_{i}}\right)-e\left(V_{S_{i-1}}\right).\label{eq:delta-i}
\end{align}
Next we observe
that with probability at least $1/3$, our discrepancy properties
are to some extent maintained, while for many $i$ we can find many
vertices in $X$ with distinct degrees into $U_{i}$. Recall that $D$ is a random subset of half the elements of $|D^1|$.
\begin{claim}
\label{claim:final-different-degrees}There are $\gamma_{1},\gamma_{3}=\Omega\left(1\right)$
and $Q_{2},Q_{4}=O\left(1\right)$ such that the following hold together
with probability at least $1/3$.
\begin{enumerate}
\item [(i)]there is a set $\mathcal{I}_{1}$ of $\left(1-\gamma_{1}/\left(8Q_{2}\right)\right)\n S$
indices $i$ such that for each $i\in\mathcal{I}_{1}$, we have $e\left(D,V_{S_{i}}\right)=\n Sd_{D}+O\left(\n D\right)$;
\item [(ii)]There is a set $\mathcal{I}_{2}$ of $\left(1-\gamma_{1}/\left(8Q_{2}\right)\right)\n S$
indices $i$, each with a set $X_{i}\subseteq X$ of size $2\gamma_{3}\left|X\right|$,
such that the $d_{U_{i}}\left(\boldsymbol{x}\right)$, for $\boldsymbol{x}\in X_{i}$,
are distinct;
\item [(iii)]there is a set $X^{*}$ of size $\left(1-\gamma_{3}\right)\left|X\right|$
such that for each $\boldsymbol{x}\in X^{*}$ we have $\left|d_{D}-d_{D}\left(\boldsymbol{x}\right)\right|\le Q_{4}\sqrt{\n D}$;
\item [(iv)]$e_{\n S}-e_{0}\ge3\gamma_{1}\n S\sqrt{\n D}$;
\item [(v)]$\sum_{i:\left|\Delta_{i}\right|\ge Q_{2}\sqrt{\n D}}\left|\Delta_{i}\right|\le\gamma_{1}\n S\sqrt{\n D}.$
\end{enumerate}
\end{claim}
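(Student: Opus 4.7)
The plan is to prove each of (i)--(v) individually with large constant probability (items (i), (ii), (iii), (v) with probability at least $1-\varepsilon_0$ for some small $\varepsilon_0$, and item (iv) with probability at least $1/2$) and then combine via the union bound. Throughout, we treat $D^1$ as fixed with the properties guaranteed by \cref{claim:X-S-split}, so every random variable of interest is a function of the random set $D\subseteq D^1$ of size $\n D=|D^1|/2$, and in particular is of $(2\n D,1/2,b)$-hypergeometric type for an appropriate $b$.

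The critical item is (iv). Unwinding $e_i=e(V_{S_i})+e(V_{S_i},W\cup U)$ and using the hypotheses $d_W(\boldsymbol v)=d_W+o(\sqrt{\n D})$ and $d_{U^0}(\boldsymbol v)=d_{U^0}$ for $\boldsymbol v\in M$, together with $|e(V_{S_{\n S}})-e(V_{S_0})|=O(\n S^2)$, one obtains
\[
e_{\n S}-e_0 \;=\; \sum_{\boldsymbol v\in S^+}d_D(\boldsymbol v)\;-\;\sum_{\boldsymbol v\in S^-}d_D(\boldsymbol v)\;+\;R,
\]
where the deterministic error $R$ has magnitude $o(\n S\sqrt{\n D})$ (using $\n S\le\delta\sqrt{\n D}$ for $\delta$ sufficiently small). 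By linearity and \cref{eq:expected-degree-difference}, the expectation of the random sum equals $\tfrac12\bigl(\sum_{\boldsymbol v\in S^+}d_{D^1}(\boldsymbol v)-\sum_{\boldsymbol v\in S^-}d_{D^1}(\boldsymbol v)\bigr)\ge\Theta(\n S\sqrt{\n D})$. Ordinary concentration only gives standard deviation of the same order $\n S\sqrt{\n D}$, which is too weak; however, since $D$ is an exact half of $D^1$, the random sum is of $(1/2)$-hypergeometric type, and \cref{lem:symmetric} yields that it exceeds its expectation with probability at least $1/2$. This proves (iv) for any sufficiently small $\gamma_1$.

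The remaining items are standard applications of \cref{lem:concentration}, \cref{lem:anticoncentration}, and Markov's inequality. For (v), the analogous unwinding gives $\Delta_i=d_D(\boldsymbol v^+_{\n S-i+1})-d_D(\boldsymbol v^-_i)+O(\sqrt{\n D})$, which is of $(2\n D,1/2,O(1))$-hypergeometric type with mean $O(\sqrt{\n D})$ (since $d_{D^1}(\boldsymbol v)=2d_D+O(\sqrt{\n D})$ for $\boldsymbol v\in S^-\cup S^+\subseteq S^1$); \cref{lem:concentration} gives $\Pr(|\Delta_i|\ge Q_2\sqrt{\n D})\le e^{-\Omega(Q_2^2)}$, and integrating the tail yields $\E\sum_i|\Delta_i|\one[|\Delta_i|\ge Q_2\sqrt{\n D}]=O(\n S\sqrt{\n D}\cdot Q_2\,e^{-\Omega(Q_2^2)})$, after which taking $Q_2$ large and applying Markov proves (v). For (i), $e(D,V_{S_i})$ viewed as a function of $D$ has mean $\n S d_D+O(\n D)$ (again using $d_{D^1}(\boldsymbol v)=2d_D+O(\sqrt{\n D})$) and by \cref{lem:concentration} (with $b=O(\sqrt{\n D})$) is concentrated around its mean within a large constant multiple of $\n D$ with constant probability for each $i$; Markov on the count of bad $i$ gives (i). For (iii), each $d_D(\boldsymbol x)$ has mean $d_D+O(\sqrt{\n D})$ and subgaussian tail $e^{-\Omega(t^2/\n D)}$, so Markov on the count of $\boldsymbol x\in X$ failing $|d_D(\boldsymbol x)-d_D|\le Q_4\sqrt{\n D}$ yields (iii) for any sufficiently large constant $Q_4$.

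For (ii), we combine anticoncentration with Tur\'an's theorem. For each $i$ and $\{\boldsymbol x,\boldsymbol y\}\in\binom{X}{2}$, the assumption $d_{U^0}(\boldsymbol x)=d_{U^0}(\boldsymbol y)$ gives
\[
d_{U_i}(\boldsymbol x)-d_{U_i}(\boldsymbol y)\;=\;-\bigl(d_D(\boldsymbol x)-d_D(\boldsymbol y)\bigr)\;+\;c_{i,\boldsymbol x,\boldsymbol y},
\]
where $c_{i,\boldsymbol x,\boldsymbol y}$ is a fixed quantity of magnitude $O(\sqrt{\n D})$ arising from $d_W$ and from $d_{V_{S_i}}(\boldsymbol x)-d_{V_{S_i}}(\boldsymbol y)$. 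By (i) of \cref{claim:X-S-split} we have $|N_{D^1}(\boldsymbol x)\triangle N_{D^1}(\boldsymbol y)|=\Omega(\n D)$, so $d_D(\boldsymbol x)-d_D(\boldsymbol y)$ is of $(2\n D,1/2,O(1),\Omega(\n D))^*$-hypergeometric type, and \cref{lem:anticoncentration} bounds the collision probability $\Pr(d_{U_i}(\boldsymbol x)=d_{U_i}(\boldsymbol y))$ by $O(1/\sqrt{\n D})$. Summing over pairs and indices (and taking $|X|=\Theta(\sqrt{\n D})$ by truncation if necessary), the expected total collision count is $O(\n S|X|^2/\sqrt{\n D})=O(\n S|X|)$; by Markov a $(1-\varepsilon_0)$-fraction of indices $i$ yield collision graphs on $X$ with $O(|X|)$ edges, and \cref{prop:turan} extracts from each such graph an independent set $X_i\subseteq X$ of size $\Omega(|X|)=\Omega(\sqrt{\n D})$, proving (ii). The main obstacle is (iv): the mean and standard deviation of $e_{\n S}-e_0$ are of the same order, and the argument survives only because of the exact-halving structure $|D|=|D^1|/2$ and the resulting applicability of \cref{lem:symmetric}.
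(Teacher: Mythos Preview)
Your proposal is correct and follows essentially the same approach as the paper: each of (i)--(v) is established individually (items (i), (ii), (iii), (v) with probability close to $1$ via concentration/anticoncentration plus Markov, and item (iv) with probability $\ge 1/2$ via \cref{lem:symmetric}), then combined by a union bound. The crucial observation --- that for (iv) the mean and fluctuation of $e_{\n S}-e_0$ are of the same order, so one must exploit the exact-halving symmetry of $D\subseteq D^1$ through \cref{lem:symmetric} rather than concentration --- is exactly the paper's argument, and your treatment of the remaining items (tail integration for (v), hypergeometric concentration for (i) and (iii), anticoncentration plus Tur\'an for (ii), including the truncation $|X|=\Theta(\sqrt{\n D})$ that the paper leaves implicit) matches as well.
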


\begin{proof}
We will prove that each part holds with probability at least $0.99$,
except (iv), which holds with probability at least $1/2$. The values of $\gamma_{1},Q_{2},\gamma_{3},Q_{4}$
will be determined in order, and will depend on each other.

For (iv), recalling \cref{eq:delta-i} we observe
\begin{align*}
\E\,\Delta_i & = \E\left[e_{i}-e_{i-1}\right]\\
 &=\left(
d_{W\cup U^{0}}\left(
\vphantom{\boldsymbol{v}_{\n S-i+1}^{+}}
\boldsymbol{v}_{i}^{-}\right)-d_{D^1}\left(
\vphantom{\boldsymbol{v}_{\n S-i+1}^{+}}
\boldsymbol{v}_{i}^{-}\right)/2\right)-\left(d_{W\cup U^{0}}\left(\boldsymbol{v}_{\n S-i+1}^{+}\right)-d_{D^1}\left(\boldsymbol{v}_{\n S-i+1}^{+}\right)/2\right)-O\left(\n S\right).
\end{align*}

Recall from the third assumption of this lemma that $d_{W\cup U^{0}}\left(\boldsymbol{v}\right)=d_{U^{0}}+d_W+o\left(\sqrt{\n D}\right)$ for all $\boldsymbol{v}\in M$, and recall from \cref{eq:expected-degree-difference} that the degrees from $S^+$ into $D^1$ are larger by $\Theta\left(\sqrt{\n D}\right)$ than the degrees from $S^-$ into $D^1$. Also, recall that $\n S<\delta \sqrt{\n D}$. For small $\delta$ it follows that
$$\E\left[e_{i}-e_{i-1}\right] = \Theta\left(\sqrt{\n D}\right)-o\left(\sqrt{\n D}\right)-O\left(\n S\right)=\Theta\left(\sqrt{\n D}\right).$$
So, $\E\left[e_{\n S}-e_{0}\right]=\Theta\left(\n S\sqrt{\n D}\right)$.
Since $e_{\n S}-e_{0}$ is of $\left(1/2\right)$-hypergeometric type, we may apply \cref{lem:symmetric} to show that for small $\gamma_{1}$ it is at least as
large as its expectation $\Omega\left(\n S\sqrt{\n D}\right)\ge3\gamma_{1}\n S\sqrt{\n D}$,
with probability at least $1/2$.

For (v), observe that for each $0<i\le\n S$, the random variable
$\Delta_{i}$ is of $\left(2\n D,1/2,k\right)$-hypergeometric type, because it is a translation of the random variable $d_{D}\left(\boldsymbol{v}_{\n S-i+1}^{+}\right)-d_{D}\left(
\vphantom{\boldsymbol{v}_{\n S-i+1}^{+}}
\boldsymbol{v}_{i}^{-}\right).$
We have just computed that $\E\,\Delta_i=O\left(\sqrt{\n D}\right)$, so by \cref{lem:concentration} we therefore have $\Pr\left(\left|\Delta_{i}\right|\ge t\right)=\exp\left(-\Omega\left(t^{2}/\n D\right)\right)$.
Now, for any nonnegative integer random variable $\xi$, we have $\E\xi=\sum_{t=1}^{\infty}\Pr\left(\xi\ge t\right)$,
so
\begin{align*}
\E\left[\left|\Delta_{i}\right|\one_{\left|\Delta_{i}\right|\ge Q_{2}\sqrt{\n D}}\right] & =\sum_{t=1}^{\infty}\Pr\left(\left|\Delta_{i}\right|\one_{\left|\Delta_{i}\right|\ge Q_{2}\sqrt{\n D}}\ge t\right)\\
 & =Q_{2}\sqrt{\n D}\Pr\left(\left|\Delta_{i}\right|\ge Q_{2}\sqrt{\n D}\right)+\sum_{t=Q_{2}\sqrt{\n D}}^{\infty}\Pr\left(\left|\Delta_{i}\right|\ge t\right)\\
 & =Q_{2}\sqrt{\n D}e^{-\Omega\left(Q_{2}^{2}\right)}+\sum_{t=Q_{2}\sqrt{\n D}}^{\infty}\exp\left(-\Omega\left(t^{2}/\n D\right)\right)=e^{-\Omega\left(Q_{2}^{2}\right)}\sqrt{\n D}.
\end{align*}
For sufficiently large $Q_{2}$, this is at most $\left(\gamma_{1}/100\right)\sqrt{\n D}$,
so
\[
\E\sum_{i:\left|\Delta_{i}\right|\ge Q_{2}\sqrt{\n D}}\left|\Delta_{i}\right|\le\left(\gamma_{1}/100\right)\n S\sqrt{\n D}
\]
and (v) holds with probability at least $0.99$ by Markov's inequality.

For (i), recall from (ii) of
\cref{claim:X-S-split} that each $\boldsymbol x\in S^1$ has degree $2d_D+O\left(\sqrt{\n D}\right)$ into $D^1$. Therefore, for each $0\le i\le\n S$, $e\left(D,V_{S_i}\right)$
is of $\left(2\n D,1/2,O\left(\sqrt{\n D}\right)\right)$-hypergeometric
type, and has mean $\n Sd_{D}+O\left(\n S \sqrt \n D\right)=\n Sd_{D}+O(\n D)$. So, applying \cref{lem:concentration} with $t$ a large multiple of $\n D$, we have $e\left(D,V_{Z}\right)=\n Sd_{D}+O\left(\n D\right)$
with probability at least $1-\gamma_{1}/\left(800Q_{2}\right)$. The expected
number of indices $i$ for which this fails is $\left(\gamma_{1}/\left(800Q_{2}\right)\right)\n S$,
so by Markov's inequality, the probability it fails for more than
$\left(\gamma_{1}/\left(8Q_{2}\right)\right)\n S$ indices $i$ is
at most $0.99$.

Next we consider (ii). For each $i$ and each $\left\{ \boldsymbol{x},\boldsymbol{y}\right\} \in\binom{X}{2}$, let
$$d_i=\left(d_W(\boldsymbol x)+d_{U^0}(\boldsymbol x)+d_{V_{S_i}}(\boldsymbol x)\right)-\left(d_W(\boldsymbol y)+d_{U^0}(\boldsymbol y)+d_{V_{S_i}}(\boldsymbol y)\right)=o(\sqrt{\n D})+O(\n Z),$$
so $|d_i|\le \sqrt{\n D}$ for small $\delta$. Then, observe that the random variable
$$d_{U_{i}}\left(\boldsymbol{x}\right)-d_{U_{i}}\left(\boldsymbol{y}\right)-d_i=d_D(\boldsymbol y)-d_D(\boldsymbol x)$$
is of $\left(2\n D,1/2,O\left(1\right),\left|N_{D^{1}}\left(\boldsymbol{x}\right)\triangle N_{D^{1}}\left(\boldsymbol{y}\right)\right|\right)^{*}$-hypergeometric
type. So, by part (i) of \cref{claim:X-S-split} and \cref{lem:anticoncentration}, $\Pr\left(d_{U_{i}}\left(\boldsymbol{x}\right)=d_{U_{i}}\left(\boldsymbol{y}\right)\right)=O\left(1/\sqrt{\n D}\right)$.
Let $H_{i}$ be the graph of pairs $\left\{ \boldsymbol{x},\boldsymbol{y}\right\} \in\binom{X}{2}$
satisfying $d_{U_{i}}\left(\boldsymbol{x}\right)=d_{U_{i}}\left(\boldsymbol{y}\right)$,
so we have $\E e\left(H_{i}\right)=O\left(\sqrt{\n D}\right)$. By
Markov's inequality, with probability at least $1-\gamma_{1}/\left(800Q_{2}\right)$
we have $e\left(H_{i}\right)=O\left(\sqrt{\n D}\right)$, in which
case by \cref{prop:turan}, $H_{i}$ has an independent set $X_{i}$
of size $2\gamma_{3}\sqrt{n}$, for some $\gamma_{3}>0$. The expected
proportion of indices $i$ for which this fails to occur is $\gamma_{1}/\left(800Q_{2}\right)$,
and by Markov's inequality again, with probability at least $0.99$
it fails for only a $\gamma_{1}/\left(8Q_{2}\right)$ proportion.

Finally we consider (iii). For each $\boldsymbol{x}\in X$, $d_{D}\left(\boldsymbol{x}\right)$ is of ($2\n D$, $1/2$, $O(1)$)-hypergeometric type, and by (ii) in
\cref{claim:X-S-split}, it has mean $d_D+O(\sqrt{\n D})$. Therefore, by \cref{lem:concentration}, with large
enough $Q_{4}$, we have $\left|d_{D}-d_{D}\left(\boldsymbol{x}\right)\right|\le Q_{4}\sqrt{\n D}$
with probability at least $1-\gamma_{3}/100$, and by Markov's inequality
the probability this fails for more than $\gamma_{3}\left|X\right|$
vertices is at most $0.99$.
\end{proof}
Now it is a relatively simple matter to put everything together to prove \cref{lem:sub-random}. Fix $\gamma_{1},Q_{2},\gamma_{3},Q_{4}$ and $U$ such that all parts
of the above claim are satisfied. By (iii), for any $0\le i\le\n S$, any $\boldsymbol{x}\in X^{*}$, and small $\delta$, we have
\begin{equation}
\left|d_{U_{i}}\left(\boldsymbol{x}\right)-\left(\alpha d_{U}+d_{W}\right)\right|\le d_{V_{S_i}}(\boldsymbol x)+Q_{4}\sqrt{\n D}+o(\sqrt {\n D})= O(\n S)+Q_{4}\sqrt{\n D}< 2Q_{4}\sqrt{\n D}.\label{eq:separation}
\end{equation}

By \cref{lem:well-separated} (with $\lambda = 3\gamma_1 \n S \sqrt{\n D}$, $\rho=Q_2\sqrt{\n D}$, $\kappa=\gamma_1 \n S\sqrt{\n D}$ and $\sigma=\sqrt{\n D}$) and parts (iv) and (v) of the above claim,
for large enough $Q_{2}$ there is an increasing subsequence $i_{1},\dots,i_{t}$,
with $t\ge\gamma_{1}\n S/\left(2Q_{2}\right)$, such that $e_{i-1}-e_{i}\ge\sqrt{\n D}$
for each $1<i\le t$. Delete all indices not in $\mathcal{I}_{1}\cap\mathcal{I}_{2}$ (there are at most $\gamma_1 \n S/(4Q_2)$ such)
to obtain a subsubsequence $i_{1}',\dots,i_{s}'$ with $s\ge\gamma_{1}\n S/\left(4Q_{2}\right)$.
Let $\mathcal{I}$ contain every $4Q_{4}$th element of this subsubsequence,
so that $\left|\mathcal{I}\right|=\Theta\left(\n S\right)=\Theta\left(\n Z\right)$
and
$$\left|e_{i}-e_{i'}\right|=\left|e(U_i)-e(U_{i'})\right|\ge4Q_{4}\sqrt{\n D}$$
for every pair
of distinct indices $i,i'\in\mathcal{I}$. Recalling \cref{eq:separation}, this means that for different $i\in \mathcal I$, there is no overlap between the sets of values $\{e(U_i)+d_{U_i}(\boldsymbol x):\boldsymbol x\in X^*\}$. By the definition of $X_i$ in (ii) of \cref{claim:final-different-degrees}, this means that for each of the $\Theta\left(\n Z\sqrt{\n D}\right)$
choices of $i\in\mathcal{I}$ and $\boldsymbol{x}\in X_{i}\cap X^{*}$,
the values
$e\left(W\cup U\cup V_{S_{i}\cup\left\{ \boldsymbol{x}\right\} }\right)=e(U_i)+d_{U_i}(\boldsymbol x)$
are in fact distinct. It remains to show that the $e\left(U,V_{S_{i}\cup\left\{ \boldsymbol{x}\right\} }\right)$ are close to their expectations $\alpha\n Zd_{U^{0}}$. We have $e\left(U^{0},V_{S_{i}\cup\left\{ \boldsymbol{x}\right\} }\right)=\n Z d_{U^0}$, $d_{D}=(1-\alpha)d_{U^0}$ and $\n S=\n Z-1$, so by (i) and (iii) in \cref{claim:final-different-degrees}, for sufficiently large $B$,
\begin{align*}
\left|e\left(U,V_{S_{i}\cup\left\{ \boldsymbol{x}\right\} }\right)-\alpha\n Zd_{U^{0}}\right| & =\left|e\left(U^{0},V_{S_{i}\cup\left\{ \boldsymbol{x}\right\} }\right)-e\left(D,V_{S_{i}}\right)-d_{D}\left(\boldsymbol{x}\right)-\n Zd_{U^{0}}+\n Sd_{D}+d_{D}\right|\\
 & \le O\left(\n D+\sqrt{\n D}\right)\le B\n D.
\end{align*}
We have proved that the statements of \cref{claim:X-S-split,claim:final-different-degrees}
hold together with probability at least $\left(3/4\right)\left(1/3\right)=1/4$,
in which case the desired conclusion holds.

\section{Concluding remarks}\label{sec:concluding}

We have proved the Erd\H os--Faudree--S\'os
conjecture that for any fixed $C$, if $G$ is an $n$-vertex graph
with no homogeneous subgraph on $C\log n$ vertices, then $G$ contains
$\Omega\left(n^{5/2}\right)$ induced subgraphs, no pair of which
have the same numbers of vertices and edges. We feel that this area
is still a long way from maturity, and there is much more room for
further research towards understanding the structure of $C$-Ramsey
graphs. We hope that such research will inform future work on explicit
constructions of Ramsey graphs.

Regarding specific open questions, of course the Erd\H os--McKay
conjecture remains an intriguing problem. We would also like
to draw attention to the subject of subgraphs with many different
degrees: as mentioned in the introduction, answering a different conjecture of Erd\H os, Faudree and
S\'os~\cite{Erd92,Erd97}, Bukh and Sudakov~\cite{BS07} proved that
$C$-Ramsey graphs have induced subgraphs with $\Omega\left(\sqrt{n}\right)$
different degrees. However, in random graphs one can actually find
induced subgraphs with $\Omega\left(n^{2/3}\right)$ distinct degrees
(this was proved in an unpublished paper of Conlon, Morris, Samotij
and Saxton~\cite{CMSS}), and it is not clear whether such an improved
bound also holds for $C$-Ramsey graphs.

Additionally, observe that the main result of this paper can be rephrased as the fact that in an $O(1)$-Ramsey graph, for most choices of $\ell$, there are many possibilities for the number of edges in a subset of $\ell$ vertices. We believe a natural next step would be to study statistical properties of the number of edges in a \emph{random} set of $\ell$ vertices. For example, is this random variable anticoncentrated? For general graphs this question was first studied by Alon, Hefetz, Krivelevich and Tyomkyn~\cite{AHKT} (see \cite{KST,FS,MMNT} for further work). Regarding Ramsey graphs, as we recently proposed in a paper with Tuan Tran \cite{KST}, could it be true that in any $O(1)$-Ramsey graph $G$, if $A$ is a uniformly random set of $n/2$ vertices, then $\Pr(e(G[A])=x)=O(1/n)$ for all $x$? In \cite{KS} we also formulated a version of this question for random subsets where the presence of each vertex is chosen independently, which may be more tractable.

Finally, we believe an interesting further direction of research would
be to consider regimes where larger homogeneous subgraphs are forbidden
(see~\cite{AB89,AB07,AKS03,NT17} for some examples of theorems of
this type). In~\cite{KS} we proposed the conjecture that $\left|\Phi\left(G\right)\right|=\Omega\left(e\left(G\right)\right)$
for graphs $G$ which have no homogeneous subgraph on $n/4$ vertices;
we do not know a good counterpart of this conjecture for $\left|\Psi\left(G\right)\right|$,
but it seems likely that some nontrivial bound should hold.

\begin{thebibliography}{10}

\bibitem{ABKS09}
N.~Alon, J.~Balogh, A.~Kostochka, and W.~Samotij, \emph{Sizes of induced
  subgraphs of {R}amsey graphs}, Combin. Probab. Comput. \textbf{18} (2009),
  no.~4, 459--476.

\bibitem{AB89}
N.~Alon and B.~Bollob\'as, \emph{Graphs with a small number of distinct induced
  subgraphs}, Discrete Math. \textbf{75} (1989), no.~1-3, 23--30, Graph theory
  and combinatorics (Cambridge, 1988).

\bibitem{AHKT}
N.~Alon, D.~Hefetz, M.~Krivelevich and M.~Tyomkyn, \emph{Edge-statistics on large graphs}, arXiv preprint arXiv:1805.06848 (2018).

\bibitem{AK09}
N.~Alon and A.~V. Kostochka, \emph{Induced subgraphs with distinct sizes},
  Random Structures Algorithms \textbf{34} (2009), no.~1, 45--53.

\bibitem{AKS03}
N.~Alon, M.~Krivelevich, and B.~Sudakov, \emph{Induced subgraphs of prescribed
  size}, J. Graph Theory \textbf{43} (2003), no.~4, 239--251.

\bibitem{AS}
N.~Alon and J.~H. Spencer, \emph{The probabilistic method}, fourth ed., Wiley
  Series in Discrete Mathematics and Optimization, John Wiley \& Sons, Inc.,
  Hoboken, NJ, 2016.

\bibitem{AB07}
M.~Axenovich and J.~Balogh, \emph{Graphs having small number of sizes on
  induced {$k$}-subgraphs}, SIAM J. Discrete Math. \textbf{21} (2007), no.~1,
  264--272.

\bibitem{BRSW12}
B.~Barak, A.~Rao, R.~Shaltiel, and A.~Wigderson, \emph{2-source dispersers for
  {$n^{o(1)}$} entropy, and {R}amsey graphs beating the {F}rankl-{W}ilson
  construction}, Ann. of Math. (2) \textbf{176} (2012), no.~3, 1483--1543.

\bibitem{Bik69}
A.~Bikelis, \emph{The estimation of the remainder term in the central limit
  theorem for samples taken from finite sets}, Studia Sci. Math. Hungar.
  \textbf{4} (1969), 345--354.

\bibitem{BS07}
B.~Bukh and B.~Sudakov, \emph{Induced subgraphs of {R}amsey graphs with many
  distinct degrees}, J. Combin. Theory Ser. B \textbf{97} (2007), no.~4,
  612--619.

\bibitem{CZ16}
E.~Chattopadhyay and D.~Zuckerman, \emph{Explicit two-source extractors and
  resilient functions}, S{TOC}'16---{P}roceedings of the 48th {A}nnual {ACM}
  {SIGACT} {S}ymposium on {T}heory of {C}omputing, ACM, New York, 2016,
  pp.~670--683.

\bibitem{Coh16}
G.~Cohen, \emph{Two-source dispersers for polylogarithmic entropy and improved
  {R}amsey graphs}, S{TOC}'16---{P}roceedings of the 48th {A}nnual {ACM}
  {SIGACT} {S}ymposium on {T}heory of {C}omputing, ACM, New York, 2016,
  pp.~278--284.

\bibitem{CMSS}
D.~Conlon, R.~Morris, W.~Samotij, and D.~Saxton, \emph{The number of distinct
  degrees in an induced subgraph of a random graph}, personal communication.

\bibitem{Erd47}
P.~Erd{\H o}s, \emph{Some remarks on the theory of graphs}, Bull. Amer. Math.
  Soc. \textbf{53} (1947), 292--294.

\bibitem{Erd92b}
P.~Erd{\H o}s, \emph{On some of my favourite problems in various branches of
  combinatorics}, Fourth {C}zechoslovakian {S}ymposium on {C}ombinatorics,
  {G}raphs and {C}omplexity ({P}rachatice, 1990), Ann. Discrete Math., vol.~51,
  North-Holland, Amsterdam, 1992, pp.~69--79.

\bibitem{EH77}
P.~Erd{\H o}s and A.~Hajnal, \emph{On spanned subgraphs of graphs},
  Contributions to graph theory and its applications ({I}nternat. {C}olloq.,
  {O}berhof, 1977) ({G}erman), Tech. Hochschule Ilmenau, Ilmenau, 1977,
  pp.~80--96.

\bibitem{ER61}
P.~Erd{\H o}s and R.~Rado, \emph{Intersection theorems for systems of sets}, J.
  London Math. Soc. \textbf{35} (1960), 85--90.

\bibitem{ES35}
P.~Erd{\H o}s and G.~Szekeres, \emph{A combinatorial problem in geometry},
  Compositio Math. \textbf{2} (1935), 463--470.

\bibitem{ES72}
P.~Erd{\H o}s and A.~Szemer\'edi, \emph{On a {R}amsey type theorem}, Period.
  Math. Hungar. \textbf{2} (1972), 295--299, Collection of articles dedicated
  to the memory of Alfr\'ed R\'enyi, I.

\bibitem{Erd92}
P.~Erd{\H o}s, \emph{Some of my favourite problems in various branches of
  combinatorics}, Matematiche (Catania) \textbf{47} (1992), no.~2, 231--240
  (1993), Combinatorics 92 (Catania, 1992).

\bibitem{Erd97}
P.~Erd{\H o}s, \emph{Some recent problems and results in graph theory},
  Discrete Math. \textbf{164} (1997), no.~1-3, 81--85, The Second Krakow
  Conference on Graph Theory (Zgorzelisko, 1994).

\bibitem{EGPS88}
P.~Erd{\H o}s, M.~Goldberg, J.~Pach, and J.~Spencer, \emph{Cutting a graph into
  two dissimilar halves}, J. Graph Theory \textbf{12} (1988), no.~1, 121--131.

\bibitem{FS}
J.~Fox and L.~Sauermann, \emph{A completion of the proof of the Edge-statistics Conjecture}, arXiv preprint arXiv:1809.01352 (2018).

\bibitem{FW81}
P.~Frankl and R.~M. Wilson, \emph{Intersection theorems with geometric
  consequences}, Combinatorica \textbf{1} (1981), no.~4, 357--368.

\bibitem{GIKM17}
C.~Greenhill, M.~Isaev, M.~Kwan, and B.~D. McKay, \emph{The average number of
  spanning trees in sparse graphs with given degrees}, European J. Combin.
  \textbf{63} (2017), 6--25.

\bibitem{Hog78}
T.~H\"oglund, \emph{Sampling from a finite population: a remainder term
  estimate}, Scand. J. Statist. \textbf{5} (1978), no.~1, 69--71.

\bibitem{KS}
M.~Kwan and B.~Sudakov, \emph{Ramsey graphs induce subgraphs of quadratically
  many sizes}, Int. Math. Res. Not. IMRN, to appear,
  arXiv preprint arXiv:1711.02937 (2017).

\bibitem{KST}
M.~Kwan and B.~Sudakov and T.~Tran, \emph{Anticoncentration for subgraph statistics}, arXiv preprint arXiv:1807.05202 (2018).

\bibitem{MMNT}
A.~Martinsson, F.~Mousset, A.~Noever and M.~Truji{\'c}, \emph{The edge-statistics conjecture for $\ell \ll k^{6/5}$}, arXiv preprint arXiv:1809.02576 (2018).

\bibitem{NST16}
B.~Narayanan, J.~Sahasrabudhe, and I.~Tomon, \emph{Ramsey graphs induce
  subgraphs of many different sizes}, Combinatorica, to appear, arXiv preprint arXiv:1609.01705 (2016).

\bibitem{NT17}
B.~Narayanan and I.~Tomon, \emph{Induced subgraphs with many distinct degrees},
  Combin. Probab. Comput. (2017), 1--14.

\bibitem{PR99}
H.~J. Pr\"omel and V.~R\"odl, \emph{Non-{R}amsey graphs are {$c\log
  n$}-universal}, J. Combin. Theory Ser. A \textbf{88} (1999), no.~2, 379--384.

\bibitem{She98}
S.~Shelah, \emph{Erd{\H o}s and {R}\'enyi conjecture}, J. Combin. Theory Ser. A
  \textbf{82} (1998), no.~2, 179--185.

\end{thebibliography}

\vspace{0.3cm}
\noindent
{\bf Acknowledgment.}\, The authors would like to thank the referee for their careful reading of the manuscript and their valuable comments. We would also like to thank Mantas Baksys and Xuanang Chen for carefully reading the paper and finding an oversight in the proof (related to the definition of richness in \cref{subsec:diversity}).

\end{document}